\documentclass[11pt,reqno]{amsart}

\usepackage[T1]{fontenc}
\usepackage[utf8]{inputenc}
\usepackage{microtype}
\usepackage{amsmath,amssymb,amsthm,mathtools}
\usepackage{enumitem}
\usepackage{booktabs}
\usepackage{mathrsfs}
\usepackage{xcolor}
\usepackage[colorlinks=true,linkcolor=blue!55!black,citecolor=green!45!black,urlcolor=blue!60!black]{hyperref}
\usepackage[nameinlink,capitalize,noabbrev]{cleveref}

\allowdisplaybreaks
\numberwithin{equation}{section}

\newtheorem{theorem}{Theorem}[section]
\newtheorem{proposition}[theorem]{Proposition}
\newtheorem{lemma}[theorem]{Lemma}
\newtheorem{corollary}[theorem]{Corollary}

\theoremstyle{definition}
\newtheorem{definition}[theorem]{Definition}

\theoremstyle{remark}
\newtheorem{remark}[theorem]{Remark}

\newcommand{\R}{\mathbb R}
\newcommand{\N}{\mathbb N}
\newcommand{\Sph}{\mathbb S}
\newcommand{\eps}{\varepsilon}
\newcommand{\dd}{\,\mathrm d}
\newcommand{\supp}{\operatorname{supp}}

\newcommand{\diver}{\operatorname{div}}
\newcommand{\tr}{\operatorname{tr}}

\newcommand{\sgn}{\operatorname{sgn}}

\newcommand{\cH}{\mathcal H}

\newcommand{\loc}{\mathrm{loc}}

\makeatletter
\newcommand{\nosection}[1]{%
  \begingroup
  \let\@tocwrite\@gobbletwo
  \section*{#1}%
  \endgroup
}
\makeatother

\title[Anisotropic Allen--Cahn transitions]{Entire monotone solutions of the anisotropic Allen--Cahn equation in dimension 5}

\author[H. Lu]{Haowen Lu}
\address{\noindent  Department of Mathematics, Chinese University of Hong Kong, Shatin, NT, Hong Kong}
\email{hwlu@math.cuhk.edu.hk}

\author[S. Wang]{Song Wang}
\address{\noindent  Department of Mathematics, Chinese University of Hong Kong, Shatin, NT, Hong Kong}
\email{songw@link.cuhk.edu.hk}

\author[J. Wei]{Juncheng Wei}
\address{\noindent Department of Mathematics, Chinese University of Hong Kong,
Shatin, NT, Hong Kong}
\email{wei@math.cuhk.edu.hk}

\author[Y. Wu]{Yuanze Wu}
\address{\noindent  School of Mathematics, Yunnan Normal University, Kunming, 650500, P.R. China;
Yunnan Key Laboratory of Modern Analytical Mathematics and Applications, Kunming, 650500, P.R. China}
\email{yuanze.wu@ynnu.edu.cn}

\date{August 2026}

\subjclass[2020]{35B08, 35J20, 35J62, 49Q05, 53A10}

\keywords{anisotropic Allen--Cahn equation, anisotropic De Giorgi problem,
Mooney--Yang foliation, anisotropic minimal hypersurface, weighted Jacobi operator,
gluing, monotone entire solution}

\begin{document}

\begin{abstract}
In this paper, we consider the anisotropic Allen-Cahn equation
\begin{eqnarray*}
-\diver a(Du)+W'(u)=0\quad\text{in }\mathbb{R}^N,
\end{eqnarray*}
where $a(p):=DH(p)$ with $H(p)=\frac{1}{2}F(p)^2$ and $F$ a uniformly elliptic integrand, and $W(u)=\frac{1}{4}(1-u^2)^2$. Based on the Mooney-Yang anisotropic minimal graph, we prove that  the anisotropic Allen-Cahn equation admits a {\bf stable} solution for $N\geq4$ in the weak sense, {\bf whose level sets are not hyperplanes}.  As a byproduct, we also construct a smooth solution of the above anisotropic Allen-Cahn equation for $N\geq5$ that is {\bf monotone in one direction but is not one-dimensional}.
\end{abstract}

\maketitle
\tableofcontents

\section{Introduction}
The Allen--Cahn equation
\begin{equation}\label{Allen-Cahn}
\Delta u+u-u^3=0 \qquad \text{in }\mathbb{R}^N
\end{equation}
is a basic model for phase transitions in chemistry and physics. After introducing a small parameter
$\varepsilon>0$, one obtains the equivalent scaled equation
\begin{equation}\label{Allen-Cahn-2}
\varepsilon^2\Delta u+u-u^3=0 \qquad \text{in }\mathbb{R}^N,
\end{equation}
which is the Euler--Lagrange equation of the scaled Allen--Cahn energy
$$ \int \frac{1}{2} \varepsilon |\nabla u|^2+\frac{1}{4 \varepsilon} (1-u^2)^2. $$ The classical theory of Modica and Mortola
\cite{ModicaMortola1977,Modica1978,Modica1987} shows that, as
$\varepsilon\to0$, transition layers of bounded-energy families are
closely related to minimal hypersurfaces. This connection naturally
links the Bernstein problem for minimal hypersurfaces with rigidity
questions for entire solutions of the Allen--Cahn equation.

\subsection{The isotropic setting}

Motivated by the Bernstein problem, De Giorgi conjectured in 1978 that a
bounded entire solution of \eqref{Allen-Cahn} which is monotone in one
direction must be one-dimensional, at least for $N\leq8$. The conjecture was
proved by Ghoussoub and Gui \cite{GhoussoubGui1998} for $N=2$ and by
Ambrosio and Cabr\'e \cite{AmbrosioCabre2000} for $N=3$. For
$4\leq N\leq8$, Savin \cite{Savin2009} proved the conjecture under the
additional end-state assumption
\[
\lim_{x_N\to\pm\infty}u(x',x_N)=\pm1.
\]
Related rigidity results under uniform end-state assumptions were obtained in
\cite{BarlowBassGui2000,BerestyckiHamelMonneau2000,
CaffarelliCordoba2006,Farina1999}. On the other hand, del Pino,
Kowalczyk and Wei \cite{delPinoKowalczykWei2011} constructed a
counterexample in dimension $N=9$. Their construction starts from the
Bombieri--De Giorgi--Giusti minimal graph
\cite{BombieriDeGiorgiGiusti1969} and uses an infinite-dimensional
Lyapunov--Schmidt reduction.

There are also nonflat entire solutions closely related to minimal cones.
Cabr\'e and Terra \cite{CabreTerra2009} constructed saddle-shaped solutions
in every even dimension, with nodal set equal to the Simons cone, and
Cabr\'e \cite{Cabre2012} studied their uniqueness and stability.
Since monotone solutions are stable, one may also ask whether bounded stable
solutions must be one-dimensional. Ghoussoub and Gui \cite{GhoussoubGui1998}, and
Ambrosio and Cabr\'e \cite{AmbrosioCabre2000}    proved a
low-dimensional result $N=2$, while Pacard and Wei \cite{PacardWei2013}
constructed non-one-dimensional stable solutions in dimensions $N\geq8$.
Their construction starts from the Simons cone and uses an
infinite-dimensional Lyapunov--Schmidt reduction. The
$O(4)\times O(4)$ symmetry is important in the analysis of the Jacobi
operator and the reduced problem. Liu, Wang and Wei
\cite{LiuWangWei2017} later constructed global minimizers by a variational
argument combined with the Jerison--Monneau lifting
\cite{JerisonMonneau2004}.

On the other hand, the corresponding  limiting Bernstein problem for stable minimal hypersurfaces has
also received much attention. Classical results include
\cite{SchoenSimonYau75,SchoenSimon1981}, and recent progress can be found in
\cite{ChodoshLi24,Bellettini25,ChodoshLiMinterStryker2026,Mazet24}.

There are also many results relating Allen--Cahn solutions to minimal hypersurfaces. Hutchinson and Tonegawa \cite{HutchinsonTonegawa2000} proved that bounded-energy critical points converge, in the varifold sense, to stationary integral interfaces. Tonegawa and Wickramasekera \cite{TonegawaWickramasekera2012} developed
the corresponding regularity theory for stable interfaces. Guaraco \cite{Guaraco2018} used the Allen--Cahn functional to construct embedded minimal hypersurfaces by min--max methods, followed by further variational results of Gaspar and Guaraco
\cite{GasparGuaraco2018,GasparGuaraco2019}. Fine properties of stable and min--max transition layers were studied by Wang and Wei \cite{WangWei2019}, Mantoulidis \cite{Mantoulidis2021}, and Chodosh and Mantoulidis \cite{ChodoshMantoulidis2020}. Dey \cite{Dey2022} proved that
the Allen--Cahn and Almgren--Pitts widths agree. For limit interfaces with free boundary, see Li, Parise and Sarnataro
\cite{LiPariseSarnataro2024}.

\subsection{The anisotropic setting}
Recently, significant attention has been focused on the anisotropic Allen–Cahn equation. More precisely, let
$F:\mathbb{R}^N\to[0,\infty)$ be a uniformly elliptic, positively
one-homogeneous integrand, and set
\begin{equation}\label{eq:H-a-def}
    H(p):=\frac12F(p)^2, \qquad a(p):=DH(p).
\end{equation}
Then the anisotropic Allen--Cahn equation is
\begin{equation}\label{eq:main-PDE}
-\varepsilon^2\operatorname{div} a(Du)+W'(u)=0
\qquad\text{in }\mathbb{R}^N,
\end{equation}
and its associated energy is
\begin{equation}\label{eq:energy}
E_\varepsilon(u;\Omega)
:=
\int_\Omega\left[\varepsilon H(Du)+\frac1\varepsilon W(u)\right]dx,
\qquad
W(u):=\frac14(1-u^2)^2.
\end{equation}
The classical $\Gamma$-convergence of \eqref{eq:energy} to anisotropic
perimeter goes back to Bouchitt\'e \cite{Bouchitte1990}. More recently,
De Rosa and Pigati \cite{DeRosaPigati2025} developed an anisotropic
Allen--Cahn min--max theory and a detailed connection between diffuse
critical points and anisotropic minimal hypersurfaces for the surface
functional
\[
\mathcal A_F(M)
=
\int_M F(\nu_M)\,\dd\mathcal H^{N-1}.
\]

The existence and regularity theory for anisotropic minimal surfaces has
a long history, starting with the work of Almgren \cite{Almgren1968} and
Allard \cite{Allard1983}. More recent developments include the work of
De Philippis, De Rosa and Ghiraldin \cite{DePhilippisDeRosaGhiraldin2018},
De Rosa and Kolasi\'nski \cite{DeRosaKolasinski2020},
De Rosa and Tione \cite{DeRosaTione2022}, and De Rosa and Resende
\cite{DeRosaResende2024}. Anisotropic min--max theory has also been
developed in
\cite{DePhilippisDeRosa2024,PhilippisDeRosaLi2024,DeRosaHalavatiWang2026}.
For an overview, we refer the reader to \cite{DeRosa2024Survey}.
Stable anisotropic minimal hypersurfaces were studied by Chodosh and Li
in \cite{ChodoshLi2023}.

The corresponding anisotropic Bernstein problem has a different
dimensional picture from the isotropic one. Low-dimensional flatness
results go back to Jenkins \cite{Jenkins1961} and Simon \cite{Simon1977},
while nonflat minimizing examples were found by Morgan
\cite{Morgan1991}, Mooney \cite{Mooney2022}, and Mooney--Yang
\cite{MooneyYang2021,MooneyYang2024}. In particular, for a suitable
anisotropy in $\mathbb{R}^4$, Mooney--Yang constructed a smooth nonflat
minimizing leaf asymptotic to the Clifford cone, together with its dilation
foliation.

This suggests the corresponding De Giorgi-type problem for \eqref{eq:main-PDE}: must a bounded entire solution that is monotone in one direction, or more generally stable, be one-dimensional? The anisotropic Bernstein conjecture suggests critical dimensions that differ from the isotropic case. Specifically, one expects monotone counterexamples to emerge starting in dimension $5$, whereas stable counterexamples should appear as early as dimension $4$. The results of this paper confirm these expectations.

\subsection{Main results and strategy}

In this paper, we construct examples for the anisotropic De Giorgi problem
starting from the Mooney--Yang foliation.  Our first result is a stable
four-dimensional solution.

\begin{theorem}[Stable solution]\label{MainThm2}
Let $F$ be the Mooney--Yang anisotropy and
\[
\Gamma
=
\{(x,y)\in\R^2\times\R^2:\ |y|=\sigma(|x|)\}
\]
be its nonflat minimizing leaf.  There exists $\alpha\in(0,1)$ such that,
for all sufficiently small $\eps>0$, the anisotropic Allen--Cahn equation
\eqref{eq:main-PDE}, with $H=\frac12F^2$, has a stable weak solution
\[
u_\eps\in
C^{1,\alpha}_{\loc}(\R^4)\cap W^{2,2}_{\loc}(\R^4),
\qquad
-1<u_\eps<1.
\]
The solution is $O(2)\times O(2)$-invariant and is smooth on
$\{Du_\eps\neq0\}$.  For every $K\Subset\Gamma$ and every
$\alpha'<\alpha$, the zero set $\{u_\eps=0\}$ converges to $\Gamma$ in
$C^{2,\alpha'}(K)$ as $\eps\to0$.  In particular, $u_\eps$ is not
one-dimensional.
\end{theorem}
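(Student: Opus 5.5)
The plan is to construct $u_\eps$ by gluing an $\eps$-rescaled one-dimensional heteroclinic profile along $\Gamma$, solving the resulting nonlinear problem by an infinite-dimensional Lyapunov--Schmidt reduction in the $O(2)\times O(2)$-invariant class, and then obtaining stability from the Mooney--Yang dilation foliation, which plays the role that monotonicity plays in the isotropic counterexamples. First, for a unit normal $\nu$ let $w_\nu(t)=w(t/F(\nu))$, where $w=\tanh(t/\sqrt2)$ solves $w''=W'(w)$. Then $w_\nu(\nu\cdot x)$ solves the equation exactly, since one-homogeneity of $F$ gives $H(s\nu)=\frac12 s^2F(\nu)^2$. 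In Fermi-type coordinates $x=z+t\,\nu_\Gamma(z)$, using the anisotropic normal $\nu^F=DF(\nu_\Gamma)$ so that the mean-curvature term in the linearization is exactly the anisotropic mean curvature, I take as approximate solution
\[
U_\eps(x)=w_{\nu_\Gamma(z)}\!\big((t-h(z))/\eps\big),
\]
cut off away from a neighbourhood of $\Gamma$ and continued by $\pm1$. Here $h$ is a small normal perturbation. Because $\Gamma$ has vanishing anisotropic mean curvature, the error $S(U_\eps)$ is of order $\eps$ times curvature terms that decay like $|z|^{-2}$ along the cone-like end of $\Gamma$.

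Second, I write $u=U_\eps+\phi$ and solve the projected problem
\[
L_\eps\phi=-S(U_\eps+\phi)+c(z)\,w'_\nu\chi,
\]
where $L_\eps=-\diver(D^2H(DU_\eps)D\cdot)+W''(U_\eps)$. The linear theory combines the nondegeneracy of $w'$ for the one-dimensional operator (the kernel is spanned by $w'$) with weighted Hölder norms in $z$. The equation is degenerate where $Du=0$, because $D^2H$ is only homogeneous of degree zero and is discontinuous at $0$. Away from the interface, however, $|Du|$ is exponentially small but nonzero, so the linearization is used only on the set where $DU_\eps\ne0$ together with a fixed-point argument in $C^{1,\alpha}$. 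This is the reason the solution is only $C^{1,\alpha}_{\loc}\cap W^{2,2}_{\loc}$ (by the regularity of $\diver a(Du)=W'(u)$ for $C^{1,1}$-type $H$, as in Tolksdorf/Lieberman-type estimates) and is smooth where $Du\ne0$. Third, the reduced equation $c(z)=0$ becomes
\[
\eps\,\mathcal J_\Gamma h=\mathcal N(h)+O(\eps^2),
\]
where $\mathcal J_\Gamma$ is the weighted anisotropic Jacobi operator of $\Gamma$, which reduces to an ODE in $|x|$ under the symmetry. I solve it using the Jacobi field generated by the dilations. That field is positive and decays like $r^{-\gamma}$ for the Mooney--Yang leaf, which gives invertibility of $\mathcal J_\Gamma$ in suitable decaying weighted spaces by a variation-of-parameters argument. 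Since $h=O(\eps)$, the zero set converges to $\Gamma$ in $C^{2,\alpha'}$ on compacts.

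Finally, for stability I use that the leaves $\lambda\Gamma$ foliate one side of the cone. Differentiating the construction along the foliation parameter (equivalently, building the solution as the member $\lambda=1$ of a family $u_{\eps,\lambda}$ attached to $\lambda\Gamma$) produces $\partial_\lambda u_{\eps,\lambda}$, which is approximately $w'$ times the positive dilation Jacobi field near the interface and is exponentially small elsewhere. I would show that it is positive and lies in the kernel of the linearized operator. Then a positive supersolution argument (Fischer-Colbrie–Schoen/Moss–Piepenbrink, adapted to the weak setting via the $W^{2,2}$ regularity and the quadratic form $\int D^2H(Du)D\psi\cdot D\psi+W''(u)\psi^2$) yields stability. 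I expect the main obstacle to be this positivity together with the matching linear theory. The difficulty is controlling the sign far from $\Gamma$, where $D^2H(Du)$ degenerates, and along the end where the Jacobi field decays, so that the $O(\eps^2)$ corrections do not destroy positivity. To handle this, I would first try a barrier comparison with $e^{-\delta|t|/\eps}$-type weights in the regions $|t|\gg\eps$.
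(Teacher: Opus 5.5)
\textbf{The gap is in the correction step.} You plan to invert $L_\eps=-\diver(D^2H(DU_\eps)D\,\cdot)+W''(U_\eps)$ and close a fixed point in $C^{1,\alpha}$. But $H=\frac12F^2$ is only $C^1$ at $0$: $a=DH$ is Lipschitz, while $D^2H$ is $0$-homogeneous and discontinuous at $0$. Consequently $a(DU_\eps+D\phi)-a(DU_\eps)-D^2H(DU_\eps)D\phi$ is quadratic in $D\phi$ only where $|D\phi|\ll|DU_\eps|$; elsewhere it has the full size $|D\phi|$.

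Your own $U_\eps$ is continued by $\pm1$, so $DU_\eps\equiv0$ on open sets where $\phi$ need not vanish. This contradicts your claim that $|DU_\eps|$ is exponentially small but nonzero. Note also that any $O(2)\times O(2)$-invariant $C^1$ function has $DU(0)=0$. On that set $L_\eps$ is not even defined, and ``linearizing only on $\{DU_\eps\neq0\}$'' yields no contraction.

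The missing idea is a non-perturbative treatment away from the layer. The paper fixes the interface and obtains the fiber-orthogonal correction by minimizing the relative energy $\int[\eps^2B_H(DU+Dv,DU)+\widetilde D_W(U,v)]\,\dd x+\langle E^\perp_{\eps,f},v\rangle$. Coercivity comes from two sources. The first is the Bregman bound $B_H(\alpha n+\xi,\alpha n)\ge\frac12(DF(n)\cdot\xi)^2$, which is uniform in $\alpha\ge0$ and so survives $q'\to0$. The second is the one-dimensional spectral gap. The translation direction reappears only as a multiplier $Z_{\eps,f}\ell_{\eps,f}$.

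The pointwise bound $|v|+\eps|Dv|\le C\eps^2\rho^{-2}$ then comes from a core--tail blow-up. There $a$ is linearized only in the logarithmic core, where $q'\ge c\eps^{m_*}$ dominates $\eps^2\rho^{-2}$. The tail uses only strong monotonicity and the global Lipschitz bound of $a$, together with $W''(\pm1)>0$. The symmetry supplies the compactness that $W^{2,2}$ lacks in dimension four.

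Relatedly, pairing $x=z+t\nu_\Gamma(z)$ with $w(t/F(\nu))$ does not cancel the normal part exactly off $\Gamma$. The paper uses the anisotropic distance $r_f$ with $F(Dr_f)=1$ and the profile $q(r_f/\eps)$. Then the residual is exactly $-\eps q'(s)H_f(Y,\eps s)$ and the fiber-orthogonal error is $O(\eps^2\rho^{-2})$.

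\textbf{The stability step also misses its key input.} Differentiating in $\lambda$ presupposes a $C^1$ solution map, which the non-smooth $a$ does not give. The paper instead uses difference quotients of the ordered family $u_t(x)=u_{\eps e^t}(e^tx)$, all of which solve the same equation. It also proves $\mathcal L^4(\{Du_\eps=0\})=0$, from $a(Du_\eps)\in W^{1,2}_{\loc}$ and the equation, so that $D^2H(Du_\eps)$ makes sense in the limiting Jacobi equation.

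More importantly, positivity along the end rests on a decay gap. The dilation field decays like $f_0\simeq\rho^{-\mu}$ (not $\rho^{-\gamma}$), while the interface perturbations decay like $\rho^{-\gamma}$ with $\gamma>\mu$. That is why the reduced equation is posed in $C^{2,\alpha}_\gamma$ with $\gamma\in(\mu,1-\mu)$. This requires the right-hand side to lie in $C^{0,\alpha}_{\gamma+2}$:
\begin{itemize}
\item the $\eps^2$ term $-\eps^2 s q'(s)\partial_tH_f$ of the projected error must vanish by parity;
\item everything else, including the projected contribution of the correction, must be $O(\eps^3\rho^{-3})$.
\end{itemize}
Together these give $\|f_\eps\|_{C^{2,\alpha}_\gamma}\le C\eps^2$.

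Your unspecified $O(\eps^2)$ remainder is generically of size $\eps^2\rho^{-2}$, which lies outside $C^{0,\alpha}_{\gamma+2}$ for every $\gamma>0$. An $h=O(\eps)$ with no decay beyond $\rho^{-\mu}$ forfeits exactly this gap. Barriers like $e^{-\delta|t|/\eps}$ cannot restore it, since the issue is decay along $\Gamma$, not across it.

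Once the gap is available, ordering holds near the layer, and no barrier is needed elsewhere. The difference $w$ of two exact weak solutions satisfies $-\eps^2\diver(A_{12}Dw)+c_{12}w=0$ with $\lambda I\le A_{12}\le\Lambda I$, and $c_{12}\ge c_*>0$ where both solutions are near $\pm1$. Testing with $w_-$ and the Harnack inequality then give $w>0$.

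Your variation-of-parameters inversion of the invariant $J_\Gamma$ via $f_0$ is sound: the integrals converge exactly for $\gamma\in(\mu,1-\mu)$. It is more elementary than the paper's Fredholm appendix.
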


The proof follows the gluing scheme of Pacard--Wei
\cite{PacardWei2013}, with several changes coming from the anisotropy.
The first one is geometric.  The Euclidean signed distance is not well
adapted to the normal profile.  In \cref{sec:background} we instead use an
anisotropic signed distance $r_f$ satisfying
\[
F(Dr_f)=1.
\]
This makes the one-dimensional equation cancel the normal part exactly.
The remaining error is given by the anisotropic mean curvature of the
level sets of $r_f$, and at the interface its leading term is
\[
H_F(\Gamma_f)=J_\Gamma f+\mathcal Q_\Gamma(f).
\]
Thus projection onto the translation mode $q'$ leads to the Jacobi
equation on $\Gamma$.  The weighted inverse for $J_\Gamma$ is developed in
\cref{sec:geometry}; it uses the asymptotically conical geometry of the
Mooney--Yang leaf and the positive Jacobi field generated by dilations.

A second difficulty appears in the correction problem.  The function
$H=\frac12F^2$ is smooth away from $0$ but, in general, is only $C^1$ at
$0$.  Since the approximate solution is equal to $\pm1$ away from the
transition region, its gradient vanishes there, and the global
linearization used in the isotropic construction is not available.  In
\cref{sec:fixed-interface} we fix the interface and solve for the
fiber-orthogonal correction by minimizing a relative energy.  The
one-dimensional spectral gap and the convexity estimates for $H$ give
coercivity on the fiber-orthogonal class.  The resulting Euler--Lagrange
equation has only one remaining multiplier in the translation direction.

The variational argument first gives $H^1$ and local $W^{2,2}$ control.
In dimension four this is not enough for the pointwise estimates needed
in the reduced problem.  The $O(2)\times O(2)$ symmetry lowers the
effective dimension, and in \cref{sec:sharp-estimates} we combine this
compactness with a core--tail argument.  Near the transition layer the
correction is small compared with the gradient of the profile, so $a$ can
be linearized there.  Away from the transition layer we use the strong
monotonicity and global Lipschitz bound for $a$, together with the
positivity of $W''$ near $\pm1$.  This gives
\[
|v_{\eps,f}(x)|+\eps|Dv_{\eps,f}(x)|
\le
C\eps^2\rho(x)^{-2},
\qquad
\rho(x):=(1+|x|^2)^{1/2}.
\]
In particular, the modification used in the variational problem does not
affect the solution for small $\eps$.  The same section gives the weighted
estimate for the multiplier and its dependence on the interface.

The projected equation now has $J_\Gamma f$ as its leading term. Using
the weighted inverse from \cref{sec:geometry} and the estimates from
\cref{sec:sharp-estimates}, we solve the reduced equation by a contraction
argument in \cref{sec:reduction}.  This gives the exact solution in
$\R^4$ and the $O(\eps^2)$ bound for the interface perturbation.

The decay estimates also allow us to compare exact solutions at nearby
phase scales.  The Mooney--Yang leaves move under dilation in the direction
of a positive Jacobi field with decay $r^{-\mu}$, while the perturbations of
the exact interfaces decay like $r^{-\gamma}$ with $\gamma>\mu$.  Hence the
ordering of the Mooney--Yang foliation is preserved near the transition
layer.  In \cref{sec:stability} we extend this ordering to all of $\R^4$
using the comparison principle for the difference of two weak solutions.
Positive difference quotients along the ordered family then give a positive
solution of the Jacobi equation.  We also show there that the critical set
of $u_\eps$ has measure zero, so the second variation in
\cref{sec:structure} is well defined.  This proves stability.  The same
ordered family gives a Hilbert calibration and shows that $u_\eps$ is a
strict $L^\infty$-local minimizer.

The ordered family also leads to a monotone solution one dimension higher.

\begin{theorem}[Monotone solution]\label{MainThm}
There exists an anisotropy $F_5:\R^5\to[0,\infty)$ such that, for all
sufficiently small $\eps>0$, the anisotropic Allen--Cahn equation
\eqref{eq:main-PDE}, with $H=\frac12F_5^2$, has a smooth bounded solution
$U_\eps$ satisfying
\[
\partial_{x_5}U_\eps>0
\qquad\text{in }\R^5.
\]
The solution $U_\eps$ is not one-dimensional.
\end{theorem}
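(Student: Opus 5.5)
We build $U_\eps$ from the ordered family of four-dimensional solutions given by Theorem~\ref{MainThm2} and its dilations, following the idea that a foliation by stable solutions lifts to a monotone solution one dimension higher. Write $\Gamma_\lambda=\lambda\Gamma$ for the Mooney--Yang leaves, which foliate one side of the Clifford cone. Scaling the isotropic-in-$\eps$ equation shows that $x\mapsto u_{\eps}(x/\lambda)$ solves the equation with phase parameter $\eps\lambda$. Hence the family $w_t(x):=u_{\eps e^{-t}}(e^{-t}x)$ has zero sets close to $e^{t}\Gamma$. The comparison argument described for the stability proof (ordering near the layer from the decay gap $\gamma>\mu$, extended to $\R^4$ by the weak comparison principle) gives that $t\mapsto w_t$ is strictly monotone. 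Differentiating, or taking difference quotients, gives a positive function $\varphi=\partial_t w_t$ solving the linearized equation.

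The five-dimensional solution should have the form $U_\eps(x,x_5)=w_{\theta(x_5)}(x)$ to leading order, with $\theta$ increasing, for instance $\theta(s)=\delta s$ with a small $\delta$. Level sets then move outward through the foliation as $x_5$ increases. The anisotropy $F_5$ on $\R^5=\R^4\times\R$ must be designed so that the profile solves the equation. The natural choice is $F_5(p,p_5)$ agreeing with $F(p)$ at $p_5=0$, and smooth, convex and uniformly elliptic. A concrete first attempt is $F_5(p,p_5)^2=F(p)^2+\kappa p_5^2$, possibly with a cross term. The $x_5$-derivative contributes $\kappa\,\partial_{55}U=\kappa\delta^2\partial_{tt}w$, and a term of that size is a small perturbation that can be absorbed by a further gluing or perturbation step.

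To get an exact solution, I would rather avoid the ansatz and use a sub/super-solution method in the spirit of the monotone construction from a foliation. Take $\underline U(x,x_5)=\sup$ over slices of $w_{t}$ shifted so that it is a subsolution, and similarly an ordered supersolution. Then solve on cylinders $B_R\times(-L,L)$ with boundary data $w_{\pm\infty}$-type limits, i.e.\ $\pm1$ as $x_5\to\pm\infty$. Monotonicity in $x_5$ comes from the sliding method: compare $U(\cdot,\cdot+h)$ with $U$ using the weak comparison principle already available for differences of weak solutions. Let $R,L\to\infty$ with $C^{1,\alpha}$ bounds. Smoothness holds because $DU\ne0$ everywhere, since $\partial_{x_5}U>0$, so $H$ is smooth along the solution and elliptic regularity bootstraps. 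Positivity of $\partial_{x_5}U$ strictly follows from the strong maximum principle applied to the linearized equation, which is smooth where $DU\neq0$.

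To exclude a one-dimensional solution, note that the limits $\lim_{x_5\to\pm\infty}U$ are trapped between the ordered barriers $w_{\pm\infty}$. As $x_5\to-\infty$ the solution converges to a stable four-dimensional $O(2)\times O(2)$ solution whose zero set is close to a Mooney--Yang leaf or to the cone side, which is not planar. Alternatively, the zero set of $U$ is a graph over the slices near $e^{\theta}\Gamma$, so it is not a hyperplane.

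The main obstacle is constructing the barriers. We need $\delta$ small and $F_5$ chosen so that $\partial_{55}$ and the mixed terms are dominated by the positive Jacobi field $\varphi$. This requires a quantitative lower bound $\mathcal L\varphi\ge c\,\varphi$-type strict supersolution property on all of $\R^4$, including far from the layer where $\varphi$ decays. I would first try to get it from the strict ordering of the foliation with rate $r^{-\mu}$, compensating tails with the positivity of $W''$ near $\pm1$.
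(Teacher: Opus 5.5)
There is a genuine gap. The ordered five-dimensional barriers, which you yourself flag as the main obstacle, are never constructed, and the ingredients you propose cannot produce them.

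(i) The family $w_t(x)=u_{\eps e^{-t}}(e^{-t}x)$ solves the fixed $\eps$-equation only while $\eps e^{-t}<\eps_0$, the threshold of \cref{thm:main}, i.e.\ for $t$ bounded below. Its zero sets are therefore only the leaves $\lambda\Gamma$ with $\lambda>\eps/\eps_0$, all in $\{|y|>|x|\}$, and they never cross $\mathcal C$. So $w_{\delta x_5}$ is undefined for $x_5\ll0$ and cannot connect the two phases.

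(ii) With your $F_5$ one has $a_5(p,p_5)=(a(p),\kappa p_5)$, so $w_{\delta x_5}(x)$ has residual exactly $-\eps^2\kappa\delta^2\partial_t^2w_t$. Its leading part is $\kappa\delta^2q''(s)V_t^2$, where $V_t$ is the normal speed of $e^t\Gamma$. This term changes sign across the layer. It is not small either: where the leaf meets $\{x=0\}$, $V_t\sim e^t$, so it grows like $e^{2\delta x_5}$. Hence $w_{\delta x_5}$ is neither a sub- nor a supersolution. The remedy $\mathcal L\varphi\ge c\varphi$ with $c>0$ is impossible for $\varphi=\partial_tw_t$, which satisfies $\mathcal L\varphi=0$.

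(iii) Without nonconstant barriers, minimizing on cylinders with data $\pm1$ at $x_5=\pm L$ gives nothing that prevents the limit from being the heteroclinic $q_5(x_5-c)$, which is itself a global minimizer of $E_{1,5}$ for any even extension $F_5$. Nothing prevents it from drifting to a constant either. Your non-one-dimensionality argument via the limit as $x_5\to-\infty$ therefore has no basis.

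A smaller point: deducing $\partial_5U>0$ from a linearized equation that is ``smooth where $DU\neq0$'' is circular. The paper instead notes that $h^{-1}(U(\cdot+he_5)-U)$ solves the bounded-measurable-coefficient equation of \cref{lem:difference-equation}, so Harnack applies. It also needs a normalization such as $\partial_5U(0)=\kappa>0$ to know this quotient is not identically zero.

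The missing idea is to use the ordered family only as \emph{four-dimensional} barriers that trap a minimizer, and to handle the region near the cone by minimization rather than by an explicit sweep.

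The paper rescales and reflects the family to obtain two barriers. One is $\underline u=u^+_{1/\ell_0}(X/\ell_0)$ near $\ell_0\Gamma$; the other is $\overline u=-u^+_{1/\ell_0}(\mathscr SX/\ell_0)$ near the reflected leaf. It proves $\underline u<\overline u$ (\cref{lem:cross-order-sec9}). Following Liu--Wang--Wei, it then minimizes $E_{1,4}$ on balls between them, giving a nonconstant $O(2)\times O(2)$-invariant global minimizer $v$ of the scale-one equation.

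It then runs the Jerison--Monneau lifting. Both $V=v(x')$ and $Q=q_5(x_5)$ are global minimizers of $E_{1,5}$, by slicing and $H_5(p',s)\ge\max\{H_4(p'),H_5(0,s)\}$. One minimizes on cylinders with data from $B_t=q_5((1-t)x_5+tq_5^{-1}(v(x')))$ and obtains monotonicity by sliding. Continuity of $t\mapsto\partial_5U_R(0)$, which runs from $\kappa_*$ to $0$, lets one impose $U(0)=v(0)$ and $\partial_5U(0)=\kappa<\kappa_*$. Evenness in $x'$ forces any one-dimensional such solution to be $q_5(x_5+c)$, which this normalization excludes. Finally, $U(x/\eps)$ gives the result for every small $\eps$.

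Incidentally, your choice $F_5^2=F^2+\kappa p_5^2$ would serve equally well on this route. Only $F_5(p',0)=F(p')$, evenness in $p_5$, and the symmetry in $p'$ are used.
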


The proof is given in \cref{sec:monotone}.  We first rescale and reflect
the four-dimensional ordered family to obtain lower and upper barriers for
one fixed equation.  Following Liu--Wang--Wei
\cite{LiuWangWei2017}, minimization between these barriers gives a
nonconstant $O(2)\times O(2)$-invariant global minimizer in $\R^4$.
We then use the five-dimensional extension of the Mooney--Yang anisotropy
and the lifting argument of Jerison--Monneau
\cite{JerisonMonneau2004}.  The comparison needed in both steps follows
from the same simple fact used in \cref{sec:stability}: the difference of
two bounded weak solutions satisfies a uniformly elliptic equation with
bounded coefficients.  Thus the maximum principle and Harnack's inequality
remain available in the barrier argument and in the vertical sliding.

The lifting preserves the symmetry in the first four variables.  A
one-dimensional solution with this symmetry would have to depend only on
$x_5$.  The normalization of the vertical derivative in the lifting rules
out this possibility and gives the non-one-dimensional solution in
\cref{MainThm}.

\subsection{Organization}

Sections~\ref{sec:structure}--\ref{sec:reduction} are devoted to the
four-dimensional construction near the Mooney--Yang leaf.
Section~\ref{sec:stability} studies the resulting ordered family and its
variational properties. In Section~\ref{sec:monotone}, we construct a
four-dimensional global minimizer and carry out the anisotropic
Jerison--Monneau lifting. The weighted Jacobi theory is completed in
Appendix~\ref{app:Jacobi}.

\section{Preliminaries}\label{sec:structure}

\subsection{Anisotropy and weak solutions}
\label{subsec:anisotropy-weak}

Throughout the paper, the anisotropy $F:\R^4\to[0,\infty)$ is assumed to
satisfy
\begin{enumerate}[label=\textup{(F\arabic*)}]
\item $F\in C^\omega(\R^4\setminus\{0\})$, $F(p)>0$ for $p\neq0$, and
      $F(0)=0$;
\item $F(tp)=tF(p)$ for $t>0$, and $F(-p)=F(p)$;
\item there exists $\lambda_F>0$ such that
\begin{equation}\label{eq:tangential-ellipticity}
 D^2F(\nu)[\tau,\tau]\ge \lambda_F|\tau|^2
 \qquad\text{whenever }|\nu|=1,\ \tau\perp\nu .
\end{equation}
\end{enumerate}
The Mooney--Yang anisotropy used in this paper has these properties; see
\cref{subsec:MY-integrand}. The function $H=\frac12F^2$ is smooth away from $0$ and, in general,
only $C^1$ at $0$. Thus \eqref{eq:main-PDE} is well defined in divergence form, whereas $D^2H(Du):D^2u$ is not defined where $Du=0$.

\begin{definition}\label{def:weak-solution}
Let $\Omega\subset\R^4$ be open.  A function
$u\in H^1_{\loc}(\Omega)$ is a weak solution of \eqref{eq:main-PDE} if
\begin{equation}\label{eq:weak-formulation}
 \eps^2\int_\Omega a(Du)\cdot D\varphi\,\dd x
 +\int_\Omega W'(u)\varphi\,\dd x=0
 \qquad
 \text{for every }\varphi\in C_c^1(\Omega).
\end{equation}
\end{definition}

\begin{remark}\label{prop:local-regularity}
Weak solutions are $C^{1,\alpha}$ and are smooth on
$\{Du\neq0\}$; see \cite[Remark~3.4]{DeRosaPigati2025}.
\end{remark}

For $p\neq0$,
\[
D^2H(p)
=
DF(p)\otimes DF(p)+F(p)D^2F(p).
\]
If $|p|=1$ and $\xi=\alpha p+\tau$ with $\tau\perp p$, homogeneity gives
\[
D^2H(p)[\xi,\xi]
=
\bigl(\alpha F(p)+DF(p)\cdot\tau\bigr)^2
+
F(p)D^2F(p)[\tau,\tau].
\]
Hence \eqref{eq:tangential-ellipticity} and compactness of $\Sph^3$ give
constants $0<\lambda\le\Lambda<\infty$ such that
\begin{equation}\label{eq:full-ellipticity}
\lambda|\xi|^2
\le D^2H(p)[\xi,\xi]
\le \Lambda|\xi|^2
\qquad
(p\neq0,\ \xi\in\R^4).
\end{equation}

For $p,q\in\R^4$, set
\begin{equation}\label{eq:bregman}
    B_H(p,q):=
H(p)-H(q)-a(q)\cdot(p-q).
\end{equation}

\begin{proposition}
\label{prop:global-structure}
For every $p,q\in\R^4$,
\begin{align}
 \frac{\lambda}{2}|p-q|^2
 &\le B_H(p,q)\le\frac{\Lambda}{2}|p-q|^2,
 \label{eq:Bregman-global}\\
 [a(p)-a(q)]\cdot(p-q)
 &\ge\lambda|p-q|^2,
 \label{eq:strong-monotonicity}\\
 |a(p)-a(q)|
 &\le\Lambda|p-q|.
 \label{eq:global-Lipschitz-a}
\end{align}
\end{proposition}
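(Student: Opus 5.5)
The plan is to integrate the pointwise Hessian bounds \eqref{eq:full-ellipticity} along the segment joining $q$ and $p$. The only subtlety is that $H$ is merely $C^1$ at the origin, so $D^2H$ may not exist at points of the segment.

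Set $\gamma(t)=q+t(p-q)$ for $t\in[0,1]$. There are two cases.

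\textbf{Case 1: the segment avoids $0$.} Then $H$ is smooth in a neighbourhood of $\gamma([0,1])$, and Taylor's formula with integral remainder gives
\[
B_H(p,q)=\int_0^1(1-t)\,D^2H(\gamma(t))[p-q,p-q]\,\dd t .
\]
Applying \eqref{eq:full-ellipticity} and using $\int_0^1(1-t)\,\dd t=\tfrac12$ yields \eqref{eq:Bregman-global}. Similarly,
\[
a(p)-a(q)=\int_0^1 D^2H(\gamma(t))(p-q)\,\dd t .
\]
Pairing with $p-q$ and using the lower bound gives \eqref{eq:strong-monotonicity}. For \eqref{eq:global-Lipschitz-a}, note that $D^2H$ is symmetric with eigenvalues in $[\lambda,\Lambda]$, so its operator norm is at most $\Lambda$; hence $|a(p)-a(q)|\le\Lambda|p-q|$.

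\textbf{Case 2: the segment passes through $0$.} This is the only real obstacle, and I will treat it by approximation. All three inequalities involve only continuous functions of $(p,q)$: $H$ and $a=DH$ are continuous on $\R^4$, since $H$ is $C^1$; indeed $a(p)=F(p)DF(p)$ is $1$-homogeneous and tends to $0$ at the origin. It therefore suffices to approximate $(p,q)$ by pairs $(p_k,q_k)\to(p,q)$ whose segments avoid $0$, for instance $p_k=p+\eta_k$ and $q_k=q+\eta_k$ with $\eta_k\to0$ chosen off the line through $p$ and $q$. If $p=q$, all three statements are trivial. Otherwise the line through $p,q$ is one-dimensional, so generic small $\eta_k$ work in $\R^4$. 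Passing to the limit in the Case 1 inequalities then gives all three for every $p,q$.

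An alternative for Case 2 is to note that the segment meets $0$ at only one parameter $t_0$. The functions $t\mapsto H(\gamma(t))$ and $t\mapsto a(\gamma(t))$ are $C^1$ and Lipschitz respectively on $[0,1]$, and twice differentiable away from $t_0$. Since they are absolutely continuous, the integral formulas above still hold with an integrand defined almost everywhere. I will use the approximation argument because it is cleaner.
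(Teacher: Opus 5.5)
Your proof is correct and is essentially the paper's argument. Both proofs integrate the Hessian bounds \eqref{eq:full-ellipticity} along $\gamma(t)=q+t(p-q)$, using the Taylor formula for $B_H$ and the formula $a(p)-a(q)=\int_0^1 D^2H(\gamma(t))(p-q)\dd t$. The paper handles a segment through the origin by noting that $\gamma(t)=0$ for at most one $t$ and using these formulas with an integrand defined only almost everywhere, which is your ``alternative''. Your perturbation argument instead uses only the continuity of $H$ and $a$, together with the fact that in Case~2 the line through $p,q$ is a subspace, so it avoids having to justify absolute continuity of $t\mapsto a(\gamma(t))$ across that point.
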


\begin{proof}
Set $\gamma(t)=q+t(p-q)$. Since $\gamma(t)=0$ for at most one
$t\in[0,1]$, \eqref{eq:full-ellipticity} holds for almost every $t$.
We have
\[
B_H(p,q)
=
\int_0^1(1-t)
D^2H(\gamma(t))[p-q,p-q]\,dt,
\]
and
\[
a(p)-a(q)
=
\int_0^1D^2H(\gamma(t))(p-q)\,dt.
\]
The estimates follow from \eqref{eq:full-ellipticity}.
\end{proof}

The next two lemmas give the coercivity estimates needed later.

\begin{lemma}\label{lem:radial-Bregman}
Let $F(n)=1$, $b=DF(n)$, and $\alpha\ge0$. For every
$\xi\in\R^4$,
\begin{equation}\label{eq:radial-Bregman}
B_H(\alpha n+\xi,\alpha n)
\ge \frac12(b\cdot\xi)^2.
\end{equation}
For every $\vartheta\in(0,1)$,
\begin{equation}\label{eq:radial-Bregman-combined}
B_H(\alpha n+\xi,\alpha n)
\ge
\frac{1-\vartheta}{2}(b\cdot\xi)^2
+\frac{\vartheta\lambda}{2}|\xi|^2.
\end{equation}
\end{lemma}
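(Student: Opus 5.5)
The key fact is a lower bound for $H$ by a supporting linear function: since $F$ is convex and one-homogeneous, $F(p)\ge DF(n)\cdot p=b\cdot p$ for every $p$. We combine this with $H=\frac12F^2$. The second estimate then follows by interpolating with the global bound \eqref{eq:Bregman-global}.

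First, the values at $\alpha n$. When $\alpha>0$, homogeneity gives $H(\alpha n)=\frac{\alpha^2}{2}$ and $a(\alpha n)=F(\alpha n)DF(\alpha n)=\alpha b$, because $DF$ is $0$-homogeneous. Euler's relation gives $b\cdot n=F(n)=1$. When $\alpha=0$ we have $H(0)=0$ and $a(0)=0$, since $H$ is $C^1$ and even. So in every case
\[
B_H(\alpha n+\xi,\alpha n)=H(\alpha n+\xi)-\tfrac{\alpha^2}{2}-\alpha\, b\cdot\xi .
\]

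Next, the lower bound for $H$. Condition (F3) together with (F2) makes $F$ convex. Since $F$ is positively one-homogeneous, its supporting hyperplane at $n$ passes through the origin, so $F(p)\ge b\cdot p$ for all $p$.

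Now take $p=\alpha n+\xi$. Then $b\cdot p=\alpha+b\cdot\xi$.
If $\alpha+b\cdot\xi\ge0$, we get $H(p)\ge\frac12(\alpha+b\cdot\xi)^2$. Hence
\[
B_H\ge\tfrac12(\alpha+b\cdot\xi)^2-\tfrac{\alpha^2}{2}-\alpha\,b\cdot\xi=\tfrac12(b\cdot\xi)^2 .
\]
If $\alpha+b\cdot\xi<0$, use only $H\ge0$. Then $B_H\ge-\frac{\alpha^2}{2}-\alpha\,b\cdot\xi$. This is $\ge\frac12(b\cdot\xi)^2$ exactly when $(\alpha+b\cdot\xi)^2\le0$, which fails.

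This negative case is the main obstacle. We resolve it with the symmetry $F(-p)=F(p)$, which gives $F(p)\ge|b\cdot p|$. Then in all cases
\[
H(p)\ge\tfrac12(\alpha+b\cdot\xi)^2 ,
\]
and the same algebra gives \eqref{eq:radial-Bregman}.

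Finally, for \eqref{eq:radial-Bregman-combined}, write $B_H=(1-\vartheta)B_H+\vartheta B_H$. Bound the first term by \eqref{eq:radial-Bregman}. Bound the second by \eqref{eq:Bregman-global} of \cref{prop:global-structure}, taking $p=\alpha n+\xi$ and $q=\alpha n$.
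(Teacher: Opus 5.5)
Your proof is correct and follows the paper's argument: convexity and evenness give $F(\alpha n+\xi)\ge|\alpha+b\cdot\xi|$, and the values $H(\alpha n)=\alpha^2/2$, $a(\alpha n)=\alpha b$ then reduce $B_H(\alpha n+\xi,\alpha n)$ to at least $\frac12(b\cdot\xi)^2$. The second bound is, as in the paper, the convex combination with \eqref{eq:Bregman-global}.
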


\begin{proof}
Since $F$ is convex and even, $|b\cdot z|\le F(z),$ on $z\in\R^4$, while homogeneity gives $b\cdot n=F(n)=1$. Hence
\[
F(\alpha n+\xi)\ge |\alpha+b\cdot\xi|.
\]
Using $H(\alpha n)=\alpha^2/2$ and $a(\alpha n)=\alpha b$
gives \eqref{eq:radial-Bregman}. A convex combination with
\eqref{eq:bregman} gives \eqref{eq:radial-Bregman-combined}.
\end{proof}

\begin{lemma}\label{lem:symmetric-radial}
Under the assumptions of \cref{lem:radial-Bregman},
\begin{equation}\label{eq:symmetric-radial}
 [a(\alpha n+\xi)-a(\alpha n)]\cdot\xi
 \ge
 \frac12(b\cdot\xi)^2+\frac{\lambda}{2}|\xi|^2.
\end{equation}
\end{lemma}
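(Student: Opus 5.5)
The plan is to symmetrize the Bregman estimates. For any $p,q$ we have the identity
\[
[a(p)-a(q)]\cdot(p-q)=B_H(p,q)+B_H(q,p),
\]
which follows directly from \eqref{eq:bregman} by adding the two expressions. The $H(p)-H(q)$ terms cancel, and what remains is $-a(q)\cdot(p-q)-a(p)\cdot(q-p)$.

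Apply this with $p=\alpha n+\xi$ and $q=\alpha n$. The left side of \eqref{eq:symmetric-radial} becomes $B_H(\alpha n+\xi,\alpha n)+B_H(\alpha n,\alpha n+\xi)$. The two terms will be bounded from below by different estimates.

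For the first term, I use \cref{lem:radial-Bregman}, specifically \eqref{eq:radial-Bregman}, which gives
\[
B_H(\alpha n+\xi,\alpha n)\ge\tfrac12(b\cdot\xi)^2.
\]
For the second term, I use the global lower bound \eqref{eq:Bregman-global} from \cref{prop:global-structure}, which gives
\[
B_H(\alpha n,\alpha n+\xi)\ge\tfrac{\lambda}{2}|\xi|^2.
\]
Adding the two bounds yields \eqref{eq:symmetric-radial} exactly. Both ingredients are already proved, so no step is a genuine obstacle. The point is only to apply the radial bound, which is anchored at the base point $\alpha n$, to the Bregman term based at $q=\alpha n$. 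The reversed term is not covered by \cref{lem:radial-Bregman}, so it must be handled by the uniform convexity estimate instead.

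If the reversed term ever needed the sharper directional bound too, it would not follow this way. The reason is that $a(\alpha n+\xi)$ is not aligned with $b$. The stated inequality, however, only requires the combination above.
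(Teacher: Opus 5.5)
Your proposal is correct and is exactly the paper's argument: you write the left-hand side as $B_H(\alpha n+\xi,\alpha n)+B_H(\alpha n,\alpha n+\xi)$, then bound the first term by \eqref{eq:radial-Bregman} and the second by the global lower bound \eqref{eq:Bregman-global}. Your verification of the symmetrization identity from \eqref{eq:bregman} is the one detail the paper leaves implicit.
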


\begin{proof}
The left-hand side equals
\[
 B_H(\alpha n+\xi,\alpha n)
 +
 B_H(\alpha n,\alpha n+\xi).
\]
Apply \eqref{eq:radial-Bregman} to the first term and
\eqref{eq:Bregman-global} to the second term.
\end{proof}

\subsection{Variational notions}

For a bounded Lipschitz domain $\Omega$, set
\begin{equation}\label{eq:energy-local}
E_\eps(u;\Omega)
:=
\int_\Omega
\left[
\eps H(Du)+\frac1\eps W(u)
\right]\dd x.
\end{equation}

\begin{proposition}\label{prop:canonical-second-variation}
Let $u$ be a weak solution satisfying
$\mathcal L^4(\{Du=0\})=0$. Set
\[
A_u=D^2H(Du)
\qquad\text{a.e. on }\{Du\neq0\},
\]
and define it arbitrarily on $\{Du=0\}$ according to
$\lambda I\le A_u\le\Lambda I$. Then, for every
$\varphi\in C_c^1(\R^4)$,
\begin{equation}\label{eq:canonical-second-variation}
\left.
\frac{\dd^2}{\dd t^2}
\right|_{t=0}
E_\eps(u+t\varphi;\Omega)
=
Q_{\eps,u}(\varphi),
\end{equation}
where $\supp\varphi\subseteq \Omega$ and
\begin{equation}\label{eq:canonical-Q}
Q_{\eps,u}(\varphi)
:=
\eps\int_{\R^4}
A_uD\varphi\cdot D\varphi\,\dd x
+
\frac1\eps\int_{\R^4}
W''(u)\varphi^2\,\dd x.
\end{equation}
The value of $Q_{\eps,u}$ is independent of the extension of $A_u$
to $\{Du=0\}$.
\end{proposition}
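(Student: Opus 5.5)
We compute the second derivative in $t$ of $E_\eps(u+t\varphi;\Omega)$ by differentiating under the integral sign twice. The potential term is elementary: $W$ is a polynomial, $u$ is locally bounded (indeed $C^{1,\alpha}$ by \cref{prop:local-regularity}) and $\varphi$ has compact support. Hence $\frac{\dd^2}{\dd t^2}\frac1\eps\int_\Omega W(u+t\varphi)$ equals $\frac1\eps\int W''(u+t\varphi)\varphi^2$, which at $t=0$ gives the second term of \eqref{eq:canonical-Q}.

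The gradient term is the substantive part. First, $t\mapsto H(Du+tD\varphi)$ is $C^1$ with derivative $a(Du+tD\varphi)\cdot D\varphi$. This derivative is dominated by $\Lambda(|Du|+|D\varphi|)|D\varphi|$, using \eqref{eq:global-Lipschitz-a} and $a(0)=0$. So the first derivative of the energy is $\eps\int a(Du+tD\varphi)\cdot D\varphi+\frac1\eps\int W'(u+t\varphi)\varphi$.

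For the second derivative at $t=0$ we form the difference quotient
\[
\frac{1}{t}\bigl[a(Du+tD\varphi)-a(Du)\bigr]\cdot D\varphi .
\]
By \eqref{eq:global-Lipschitz-a} it is bounded by $\Lambda|D\varphi|^2$, which is integrable on $\supp\varphi$; this bound is uniform in $t$. At every point with $Du(x)\neq0$, $a$ is $C^1$ near $Du(x)$ since $H$ is smooth away from $0$. There the quotient converges to $D^2H(Du)D\varphi\cdot D\varphi=A_uD\varphi\cdot D\varphi$. Since $\mathcal L^4(\{Du=0\})=0$, this convergence holds almost everywhere, and dominated convergence gives the first term of \eqref{eq:canonical-Q}.

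Strictly, this argument gives the derivative of the first derivative at $t=0$, which is exactly what $\frac{\dd^2}{\dd t^2}\big|_{t=0}$ means. The first derivative exists for all $t$ by the domination above, so the second derivative at $t=0$ is well defined.

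Independence of the extension is immediate. Two admissible choices of $A_u$ differ only on $\{Du=0\}$, which is a null set, so the integral in \eqref{eq:canonical-Q} does not change. The bound $\lambda I\le A_u\le\Lambda I$ serves only to keep the integrand bounded and measurable, so that $Q_{\eps,u}$ is finite for $\varphi\in C^1_c$.

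The only delicate step is the pointwise limit. It fails exactly on $\{Du=0\}$, where $H$ is merely $C^1$ and the difference quotient $t^{-1}[a(tD\varphi)]\cdot D\varphi$ depends on the direction of $D\varphi$ and need not match any fixed $A_u$. The measure-zero hypothesis is precisely what removes this set. The uniform Lipschitz bound of \cref{prop:global-structure} supplies the domination needed to pass to the limit.
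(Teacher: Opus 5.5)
Your proposal is correct and follows the same route as the paper. The global Lipschitz bound \eqref{eq:global-Lipschitz-a} dominates the difference quotient of $a$ by $\Lambda|D\varphi|^2$, the quotient converges to $D^2H(Du)D\varphi\cdot D\varphi$ wherever $Du\neq0$, and $\mathcal L^4(\{Du=0\})=0$ lets dominated convergence conclude; you also spell out the first derivative, the potential term and the independence of the extension, which the paper leaves implicit (a minor aside: on $\{Du=0\}$ the quotient is in fact constant, equal to $F(D\varphi)^2$ by homogeneity and oddness of $a$, but this is not a quadratic form in $D\varphi$, which is the real obstruction).
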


\begin{proof}
By \eqref{eq:global-Lipschitz-a},
\[
\frac{
a(Du+tD\varphi)-a(Du)
}{t}
\]
is bounded by $C|D\varphi|$. Since
$\mathcal L^4(\{Du=0\})=0$, dominated convergence gives
\eqref{eq:canonical-second-variation}.
\end{proof}

\begin{definition}[Stability]\label{def:stability}
A weak solution satisfying the assumption of
\cref{prop:canonical-second-variation} is \emph{stable} if
\begin{equation}\label{eq:stability-def}
Q_{\eps,u}(\varphi)\ge0
\qquad
\text{for every }\varphi\in C_c^1(\R^4).
\end{equation}
\end{definition}

\begin{remark}\label{rem:anisotropic-stability}
De Rosa--Pigati define stability for general anisotropic critical points
using smooth uniformly convex approximations of the anisotropy; see
\cite[Section~2.3 and Remark~3.2]{DeRosaPigati2025}.
For the solutions constructed here,
\cref{prop:null-critical-set} gives
$\mathcal L^4(\{Du_\eps=0\})=0$, so \eqref{eq:canonical-Q}
gives the second variation of $E_\eps$.
\end{remark}

\begin{definition}[$L^\infty$-local minimizer]
\label{def:strong-local-minimizer}
A function $u\in W^{1,2}_{\loc}(\R^4)$ is an
\emph{$L^\infty$-local minimizer} of $E_\eps$ if, for every bounded
Lipschitz domain $\Omega\Subset\R^4$, there exists $\eta>0$ such that
\begin{equation}\label{eq:strong-local-minimizer}
E_\eps(u;\Omega)\le E_\eps(w;\Omega)
\end{equation}
whenever $w-u\in W^{1,2}_0(\Omega)$ and $\|w-u\|_{L^\infty(\Omega)}<\eta.$

It is \emph{strict} if equality implies $w=u$ almost everywhere in
$\Omega$.
\end{definition}

\begin{definition}[Global minimizer]
\label{def:global-minimizer}
A function $u\in W^{1,2}_{\loc}(\R^4)$ is a \emph{global minimizer} of
$E_\eps$ if, for every bounded Lipschitz domain
$\Omega\Subset\R^4$,
\begin{equation}\label{eq:global-minimizer}
E_\eps(u;\Omega)\le E_\eps(w;\Omega)
\end{equation}
for every $w$ such that $w-u\in W^{1,2}_0(\Omega).$
\end{definition}

\begin{remark}\label{rem:minimizing-notions}
Global minimality allows arbitrary compactly supported perturbations,
while $L^\infty$-local minimality only allows sufficiently small
$L^\infty$ perturbations. When the second variation is defined,
$L^\infty$-local minimality implies stability.
\end{remark}

Recall that
\[
E_\eps(u;\Omega)
=
\int_\Omega L_\eps(u,Du)\,\dd x,
\qquad
L_\eps(z,q)
:=
\eps H(q)+\frac1\eps W(z).
\]

\begin{definition}[Hilbert calibration]
\label{def:Hilbert-calibration}
Let $\mathcal S\subset\R^4\times\R$ be open. A locally bounded vector field
\[
X=(X^x,X^z):\mathcal S\to\R^4\times\R
\]
is a \emph{Hilbert calibration for $L_\eps$} if
\[
\diver_{x,z}X=0
\qquad\text{in }\mathcal D'(\mathcal S)
\]
and
\[
L_\eps(z,q)
\ge
X(x,z)\cdot(-q,1)
\]
for every $q\in\R^4$ and almost every $(x,z)\in\mathcal S$.
A graph $z=u(x)$ is calibrated by $X$ if equality holds for
$q=Du(x)$ almost everywhere.
\end{definition}

\begin{remark}\label{rem:Hilbert-minimizing}
Let $w-u\in W^{1,2}_0(\Omega)$ and assume that the region between the
graphs of $u$ and $w$ is contained in $\mathcal S$. Since $X$ is
divergence free,
\[
\int_\Omega
X(x,w)\cdot(-Dw,1)\,\dd x
=
\int_\Omega
X(x,u)\cdot(-Du,1)\,\dd x.
\]
The calibration inequality then gives
\[
E_\eps(w;\Omega)\ge E_\eps(u;\Omega).
\]
In particular, if $\mathcal S$ contains a vertical neighborhood of the
graph of $u$ over $\Omega$, then $u$ minimizes $E_\eps$ among all
sufficiently small $L^\infty$ perturbations in $\Omega$.
\end{remark}

\section{Weighted Jacobi theory on the Mooney--Yang leaf}\label{sec:geometry}

\subsection{The full anisotropic Jacobi operator}\label{subsec:MY-integrand}
Write $\R^4=\R_x^2\times\R_y^2$.  Mooney and Yang construct an even,
$O(2)\times O(2)$-invariant, block-exchange symmetric, positively
one-homogeneous anisotropy of the form
\begin{equation}\label{eq:MY-integrand}
 F(x,y)=
 \begin{cases}
  |y|\,\phi(|x|/|y|),& |y|\ge |x|,\\
  |x|\,\phi(|y|/|x|),& |x|\ge |y|,
 \end{cases}
\end{equation}
where the extension near the axes is chosen analytic and uniformly elliptic;
see \cite{MooneyYang2021,MooneyYang2024}.  Near $1$ one may arrange
\begin{equation}\label{eq:phi-two-jet}
 \phi(1)=1,
 \qquad
 \phi'(1)=\frac12,
 \qquad
 \phi''(1)>2.
\end{equation}
Denote $s:=|y|\ge r:=|x|$ and $z=r/s$, then
\begin{equation}\label{eq:radial-Hessian-F}
 D^2_{r,s}F
 =\frac{\phi''(z)}s
 \begin{pmatrix}
  1&-z\\
  -z&z^2
 \end{pmatrix},
\end{equation}
and, for unit angular vectors $e_x$ and $e_y$ in the two blocks,
\begin{equation}\label{eq:angular-Hessian-F}
 D^2F[e_x,e_x]=\frac{\phi'(z)}{zs},
 \qquad
 D^2F[e_y,e_y]=\frac{\phi(z)-z\phi'(z)}s.
\end{equation}
The Mooney--Yang construction keeps the three quantities in
\eqref{eq:radial-Hessian-F}--\eqref{eq:angular-Hessian-F} uniformly positive,
which gives the tangential ellipticity assumed in \cref{sec:structure}.

Let
\[
 \mathcal C:=\{|x|=|y|\}
\]
be the Clifford cone.  On the side $|y|>|x|$, Mooney and Yang construct a
smooth minimizing leaf
\begin{equation}\label{eq:Gamma-graph}
 \Gamma=\{|y|=\sigma(|x|)\},
 \qquad
 \sigma(0)=1,
 \quad
 \sigma'(0)=0,
 \quad
 0<\sigma'(r)<1\quad(r>0).
\end{equation}
Up to the constant $4\pi^2$, the anisotropic area of an invariant graph is
\begin{equation}\label{eq:area-quotient}
 \mathcal A_F(\sigma)
 =\int_0^\infty r\sigma(r)\phi(\sigma'(r))\,\dd r,
\end{equation}
so that $\sigma$ satisfies
\begin{equation}\label{eq:sigma-Euler-div}
 \frac{\dd}{\dd r}\left[r\sigma\phi'(\sigma')\right]
 -r\phi(\sigma')=0.
\end{equation}
Equivalently,
\begin{equation}\label{eq:sigma-Euler}
 \sigma''
 +\frac{\phi'(\sigma')}{r\phi''(\sigma')}
 +\frac{\sigma'\phi'(\sigma')-\phi(\sigma')}
 {\sigma\phi''(\sigma')}=0.
\end{equation}

Let $\mu\in(0,1/2)$ be defined by
\begin{equation}\label{eq:mu-def}
 \mu(1-\mu)=\frac1{2\phi''(1)}.
\end{equation}
The asymptotics of the Mooney--Yang leaf give
\begin{equation}\label{eq:sigma-asymptotic}
 \sigma(r)
 =r+c_+r^{-\mu}+c_-r^{\mu-1}
 +O_2(r^{-1-2\mu}),
 \qquad c_+>0.
\end{equation}
In particular, with
\begin{equation}\label{eq:rho-def}
 \rho(X):=(1+|X|^2)^{1/2},
\end{equation}
we have
\begin{equation}\label{eq:geometry-decay}
 |A_\Gamma|\le C\rho^{-1},
 \qquad
 |\nabla^jA_\Gamma|\le C_j\rho^{-1-j}.
\end{equation}
The dilations of $\Gamma$ foliate the component of
$\R^4\setminus\mathcal C$ containing $\Gamma$.

We next recall the Jacobi operator without imposing symmetry.  If $M\subset
\R^4$ is an $F$-minimal hypersurface with unit normal $\nu$, let
\begin{equation}\label{eq:Psi-def}
 \Psi_F(\nu)
 :=\mathrm{Proj}_{\nu^\perp}\circ D^2F(\nu)\circ\mathrm{Proj}_{\nu^\perp}:\nu^\perp\longrightarrow\nu^\perp.
\end{equation}
The second variation formula (see \cite[Appendix~A]{ChodoshLi2023}) gives
\begin{equation}\label{eq:full-Jacobi-def}
 J_{F,M}u
 :=\diver_M\!\left(\Psi_F(\nu)\nabla_Mu\right)
 +\tr_M\!\left(\Psi_F(\nu)A_M^2\right)u,
\end{equation}
with
\begin{equation}\label{eq:full-second-variation}
 \delta^2\mathcal A_F(M)[u,u]
 =-\int_M uJ_{F,M}u\,\dd\cH^3.
\end{equation}

The $O(2)\times O(2)$ structure makes $\Psi_F$ explicit.  Denote the 4-vector $\nu=(\nu_x,\nu_y)$ be 2 component of 2-vector, and let $e_0=(-|\nu_y|\nu_x/|\nu_x|,|\nu_x|\nu_y/|\nu_y|)$. Let $(\nu_x)^\perp_0$ be a unit 2-vector that is orthogonal to $\nu_x$ in $\R^2$, then $\{e_0,e_x=((\nu_x)^\perp_0,0),e_y=(0,(\nu_y)^\perp_0)\}$ is an orthonormal basis of
$\nu^\perp$.

\begin{lemma}[Structure of the anisotropic Hessian]\label{lem:Psi-structure}
In the frame $\{e_0,e_x,e_y\}$,
\begin{equation}\label{eq:Psi-diagonal}
 \Psi_F(\nu)
 =\begin{pmatrix}
 \nu_y^{-3}\phi''(z)&0&0\\
 0&\nu_x^{-1}\phi'(z)&0\\
 0&0&\nu_y^{-1}(\phi(z)-z\phi'(z))
 \end{pmatrix}
\end{equation}
where $z=|\nu_x|/|\nu_y|$. In particular, on the Clifford cone,
\begin{equation}\label{eq:Psi-cone}
 \Psi_F(\nu_{\mathcal C})
 =\frac{\sqrt2}{2}
 \operatorname{diag}\!\left(4\phi''(1),1,1\right).
\end{equation}
\end{lemma}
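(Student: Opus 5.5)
The plan is to compute $D^2F(\nu)$ directly from the $O(2)\times O(2)$-invariant form \eqref{eq:MY-integrand}, and then read off its entries in the frame $\{e_0,e_x,e_y\}$. Since this frame is an orthonormal basis of $\nu^\perp$, $\Psi_F(\nu)$ is just the restriction of the bilinear form $D^2F(\nu)$ to these three vectors. I work on the branch $|\nu_y|\ge|\nu_x|>0$, where \eqref{eq:Psi-diagonal} is stated. The other branch follows by block-exchange symmetry, and the axes are excluded by $\nu_x\neq0$, which the definition of $e_0,e_x$ implicitly requires.

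\textbf{Step 1: Hessian of an invariant function.} Write $F(x,y)=G(r,s)$ with $r=|x|$, $s=|y|$ and $G(r,s)=s\,\phi(r/s)$. Set $\hat x=x/r$ and $\hat y=y/s$, and let $P_x^\perp,P_y^\perp$ be the projections onto the angular directions in each block. The chain rule gives
\[
D^2F=G_{rr}\,\hat x\otimes\hat x+G_{rs}(\hat x\otimes\hat y+\hat y\otimes\hat x)+G_{ss}\,\hat y\otimes\hat y+\frac{G_r}{r}P_x^\perp+\frac{G_s}{s}P_y^\perp .
\]
Here $\hat x,\hat y$ are embedded in $\R^4$ in the obvious way. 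The radial $2\times2$ block $(G_{rr},G_{rs};G_{rs},G_{ss})$ is exactly \eqref{eq:radial-Hessian-F}. The angular coefficients are $G_r/r=\phi'(z)/r=\phi'(z)/(zs)$ and $G_s/s=(\phi(z)-z\phi'(z))/s$, which agree with \eqref{eq:angular-Hessian-F}.

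\textbf{Step 2: evaluation in the frame.} At $p=\nu$ we have $r=|\nu_x|$, $s=|\nu_y|$ and $r^2+s^2=1$. The vector $e_0=(-s\hat x,\,r\hat y)$ lies in the radial plane $\mathrm{span}\{\hat x,\hat y\}$. The vectors $e_x$ and $e_y$ lie in the ranges of $P_x^\perp$ and $P_y^\perp$ respectively. These three subspaces are mutually orthogonal and each is invariant under $D^2F$, so all off-diagonal entries vanish.

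For the $e_0$ entry, \eqref{eq:radial-Hessian-F} gives
\[
D^2F[e_0,e_0]=\frac{\phi''(z)}{s}\bigl(s^2+2rsz+r^2z^2\bigr)=\frac{\phi''(z)}{s}\cdot\frac{(s^2+r^2)^2}{s^2}=\frac{\phi''(z)}{s^3}.
\]
This equals $\nu_y^{-3}\phi''(z)$, where $\nu_y^{-3}$ means $|\nu_y|^{-3}$. The remaining two entries are $\phi'(z)/|\nu_x|$ and $(\phi(z)-z\phi'(z))/|\nu_y|$, which proves \eqref{eq:Psi-diagonal}.

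\textbf{Step 3: the cone.} On $\mathcal C$ we have $r=s=1/\sqrt2$ and $z=1$. Substituting \eqref{eq:phi-two-jet} gives the entries
\[
2\sqrt2\,\phi''(1)=\tfrac{\sqrt2}{2}\cdot4\phi''(1),\qquad \sqrt2\cdot\tfrac12=\tfrac{\sqrt2}{2},\qquad \sqrt2\bigl(1-\tfrac12\bigr)=\tfrac{\sqrt2}{2},
\]
which is \eqref{eq:Psi-cone}.

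The only real obstacle is bookkeeping. One must check that the $e_0$-entry collapses exactly to $s^{-3}$, which uses $r^2+s^2=1$. One must also check that the mixed term $G_{rs}$ contributes no off-diagonal coupling with $e_x$ or $e_y$, which follows because those vectors are orthogonal to both $\hat x$ and $\hat y$.
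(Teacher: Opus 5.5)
Your proposal is correct and is exactly the ``standard tensorial calculus'' the paper invokes. The chain-rule Hessian of $G(|x|,|y|)$ decomposes $\nu^\perp$ into the radial direction $e_0$ and the two angular directions, the $e_0$-entry reduces to $\phi''(z)/|\nu_y|^3$ via $|\nu_x|^2+|\nu_y|^2=1$, and \eqref{eq:Psi-cone} follows from \eqref{eq:phi-two-jet} at $|\nu_x|=|\nu_y|=1/\sqrt2$ (the paper's $\alpha=\pi/4$); you have simply written out the details the paper leaves implicit.
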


\begin{proof}
\eqref{eq:Psi-diagonal} is given by standard tensorial calculus.   \eqref{eq:Psi-cone} follows from
\eqref{eq:phi-two-jet} and $\alpha=\pi/4$.
\end{proof}

Write
\[
 \Lambda:=\Sph^1(1/\sqrt2)\times\Sph^1(1/\sqrt2),
 \qquad
 \psi(t,q):=e^tq,
\]
so that $\mathcal C\setminus\{0\}$ has metric
$e^{2t}(\dd t^2+g_\Lambda)$.  The preceding computation gives the model
operator on the cone.

\begin{proposition}[Jacobi operator on the cone]\label{prop:cone-Jacobi}
With respect to the coordinates $(t,q)\in\R\times\Lambda$,
\begin{equation}\label{eq:cone-Jacobi}
 J_{F,\mathcal C}
 =\frac{\sqrt2}{2}e^{-2t}
 \left[4\phi''(1)(\partial_t^2+\partial_t)+\Delta_\Lambda+2\right].
\end{equation}
If
\begin{equation}\label{eq:lambda-j-def}
 -(\Delta_\Lambda+2)\varphi_j=\lambda_j\varphi_j,
\end{equation}
then the indicial exponents of the mode $e^{\beta t}\varphi_j$ are
\begin{equation}\label{eq:full-indicial-roots}
 \beta_j^\pm
 =-\frac12\pm\frac12
 \sqrt{1+\frac{\lambda_j}{\phi''(1)}}.
\end{equation}
For the constant mode $\lambda_0=-2$, these are
\begin{equation}\label{eq:constant-indicial-roots}
 \beta_0^+=-\mu,
 \qquad
 \beta_0^-=\mu-1.
\end{equation}
\end{proposition}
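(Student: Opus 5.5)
We plan to compute $J_{F,\mathcal C}$ directly from \eqref{eq:full-Jacobi-def}, using the conformal structure of the cone and the constant matrix \eqref{eq:Psi-cone}. The first observation is that on $\mathcal C$ the normal $\nu_{\mathcal C}=\frac{1}{\sqrt2}(-x/|x|,y/|y|)$ is constant along rays. Up to the sign conventions of the frame, $e_0$ is the unit radial direction $\partial_\rho=X/|X|$, and $e_x,e_y$ are the unit angular directions in the two circle factors. By \eqref{eq:Psi-cone}, $\Psi_F(\nu_{\mathcal C})$ is therefore $\frac{\sqrt2}{2}$ times the diagonal matrix with entry $4\phi''(1)$ in the radial direction and $1$ in each angular direction. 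Since $O(2)\times O(2)$ acts transitively on $\Lambda$, this matrix is the same at every point, and it is parallel along the radial and angular coordinate fields (these are orthogonal with constant relative norms).

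Next we write the divergence term in polar coordinates $\rho=e^t$ on $\mathcal C$, with metric $d\rho^2+\rho^2 g_\Lambda$ and volume $\rho^2\,d\rho\,dvol_\Lambda$. Then
\[
\diver_{\mathcal C}(\Psi\nabla u)=\tfrac{\sqrt2}{2}\Bigl[4\phi''(1)\rho^{-2}\partial_\rho(\rho^2\partial_\rho u)+\rho^{-2}\Delta_\Lambda u\Bigr].
\]
Here we use that the angular block of $\Psi$ is a multiple of the identity, so it passes through $\diver$ as $\Delta_\Lambda$. Setting $\rho=e^t$ gives $\rho^{-2}\partial_\rho(\rho^2\partial_\rho u)=e^{-2t}(\partial_t^2+\partial_t)u$.

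For the zeroth-order term, the second fundamental form of the Clifford cone vanishes in the radial direction. In the angular directions its principal curvatures are $\pm 1/\rho$, since each circle $\Sph^1(\rho/\sqrt2)$ contributes curvature $\sqrt2/\rho$ projected onto $\nu$ by the factor $1/\sqrt2$. Hence $A^2=\rho^{-2}\operatorname{diag}(0,1,1)$, and $\tr(\Psi A^2)=\frac{\sqrt2}{2}\cdot 2\rho^{-2}$. This only involves the angular entries $1$, which is where $\phi'(1)=\tfrac12$ matters. Combining gives \eqref{eq:cone-Jacobi}.

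For the indicial roots, we substitute $e^{\beta t}\varphi_j$. This gives $4\phi''(1)(\beta^2+\beta)-\lambda_j=0$, so
\[
\beta=-\tfrac12\pm\tfrac12\sqrt{1+\lambda_j/\phi''(1)},
\]
which is \eqref{eq:full-indicial-roots}. For $\lambda_0=-2$ we get $\beta^2+\beta+\frac{1}{2\phi''(1)}=0$. By \eqref{eq:mu-def} its roots are $-\mu$ and $\mu-1$, since their sum is $-1$ and their product is $\mu(1-\mu)$. Because $\mu<1/2$, the larger root is $-\mu$.

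The main obstacle is the bookkeeping in the first step. We must confirm that $e_0$ is exactly the radial direction on the cone and that the entry $\nu_y^{-3}\phi''$ evaluates to $\frac{\sqrt2}{2}\cdot4\phi''(1)$, which amounts to $(1/\sqrt2)^{-3}=2\sqrt2$. We must also check the sign and normalization of the curvature of $\Lambda$ inside the cone, so that the constant $2$ appears; this we would verify against the isotropic case, where $\Delta+|A|^2$ gives $\rho^{-2}(\ldots+2)$.
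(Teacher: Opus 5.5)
Your proposal is correct and follows the paper's own proof. The paper likewise computes $\tr_{\mathcal C}(\Psi_F(\nu_{\mathcal C})A_{\mathcal C}^2)=\sqrt2\,e^{-2t}$ from the principal curvatures $0,\pm e^{-t}$, combines this with \eqref{eq:Psi-cone} and the conical metric, and reads off the indicial roots from $4\phi''(1)(\beta^2+\beta)-\lambda_j=0$ together with \eqref{eq:mu-def}. Your explicit checks that $e_0$ is the radial direction on $\mathcal C$ and that the angular block of $\Psi_F$ is a multiple of the identity (so that it produces $\Delta_\Lambda$) supply details the paper leaves implicit.
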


\begin{proof}
On $\mathcal C$, the principal curvatures in the two angular directions are
$-e^{-t}$ and $e^{-t}$, while the radial principal curvature is zero.  Hence
\[
 \tr_{\mathcal C}\!\left(\Psi_F(\nu_{\mathcal C})A_{\mathcal C}^2\right)
 =\sqrt2\,e^{-2t}.
\]
Using \eqref{eq:Psi-cone} and the metric
$e^{2t}(\dd t^2+g_\Lambda)$ gives \eqref{eq:cone-Jacobi}.  Substituting
$u=e^{\beta t}\varphi_j$ then gives
\[
 4\phi''(1)(\beta^2+\beta)-\lambda_j=0,
\]
which is \eqref{eq:full-indicial-roots}.  The identity \eqref{eq:constant-indicial-roots} follows from \eqref{eq:mu-def}.
\end{proof}

The leaf $\Gamma$ is an asymptotically conical normal graph over $\mathcal C$.  More precisely, on the end it can be written as
\begin{equation}\label{eq:Gamma-normal-cone}
 \widetilde\psi(t,q)
 =e^tq+v(t)\nu_{\mathcal C}(q),
 \qquad
 v(t)=O_2(e^{-\mu t}).
\end{equation}
The next statement is the full, non-invariant computation needed for the weighted theory.

\begin{proposition}[Jacobi operator on the Mooney--Yang leaf]\label{prop:full-Jacobi-Gamma} 
In the coordinates \eqref{eq:Gamma-normal-cone},
\begin{equation}\label{eq:full-Jacobi-Gamma}
 J_{F,\Gamma}
 =\frac{\sqrt2}{2}e^{-2t}
 \left[4\phi''(1)(\partial_t^2+\partial_t)+\Delta_\Lambda+2\right]
 +\mathcal R_\Gamma
\end{equation}
has $O(2)\times O(2)$ symmetry. Here $\mathcal R_\Gamma$ is a second-order operator whose coefficients, together with the derivatives needed below, are $O(e^{-(3+\mu)t})$.
\end{proposition}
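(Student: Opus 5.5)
The plan is a perturbation argument: write $J_{F,\Gamma}$ as a second-order operator whose coefficients depend smoothly on the local geometry of $\Gamma$, and compare that geometry with the one of the cone $\mathcal C$. The symmetry claim comes first and is structural. $F$ is $O(2)\times O(2)$-invariant and $\Gamma$ is invariant as well, so for $R\in O(2)\times O(2)$ we have $\nu_\Gamma(Rp)=R\nu_\Gamma(p)$ and $D^2F(R\nu)=RD^2F(\nu)R^{T}$. Hence $\Psi_F(\nu_\Gamma)$, $A_\Gamma$, the induced metric and $\tr_\Gamma(\Psi_FA_\Gamma^2)$ are all equivariant, and \eqref{eq:full-Jacobi-def} commutes with the pullback $u\mapsto u\circ R$. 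In the coordinates \eqref{eq:Gamma-normal-cone}, $R$ acts only on $q\in\Lambda$, and $v$ depends only on $t$. Consequently every coefficient of $J_{F,\Gamma}$ depends only on $t$, and the operator is built from $\partial_t$, the two angular Killing fields and $\Delta_\Lambda$.

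For the expansion I would use the embedding $\widetilde\psi(t,q)=e^tq+v(t)\nu_{\mathcal C}(q)$ and compute the quantities entering \eqref{eq:full-Jacobi-def} as functions of $(t,v,v',v'')$ and the cone data:
\begin{itemize}
\item the induced metric $g_\Gamma=e^{2t}(dt^2+g_\Lambda)+O(|v|e^{t}+|v'|^2)$, using that the angular principal curvatures $\mp e^{-t}$ of $\mathcal C$ enter linearly in $v$;
\item the unit normal $\nu_\Gamma=\nu_{\mathcal C}+O(e^{-t}|v'|+e^{-t}|v|)$;
\item the second fundamental form $A_\Gamma=A_{\mathcal C}+O(e^{-2t}(|v|+|v'|+|v''|))$.
\end{itemize}
Smoothness of $D^2F$ near $\nu_{\mathcal C}$ (analyticity in (F1)) and \cref{lem:Psi-structure} then give $\Psi_F(\nu_\Gamma)=\Psi_F(\nu_{\mathcal C})+O(e^{-(1+\mu)t})$. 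Subtracting \eqref{eq:cone-Jacobi} from the resulting formula leaves a second-order operator $\mathcal R_\Gamma$. Its principal coefficients are $e^{-2t}\cdot O(e^{-(1+\mu)t})=O(e^{-(3+\mu)t})$, because the relative metric perturbation is $v e^{-t}\sim e^{-(1+\mu)t}$. Its first-order and zero-order coefficients pick up the same relative factor times the $e^{-2t}$ scale of the model operator, for example $\tr(\Psi A^2)-\sqrt2e^{-2t}=O(e^{-(3+\mu)t})$. Since $v=O_2(e^{-\mu t})$, and more precisely \eqref{eq:sigma-asymptotic} gives higher derivatives in $t$ with the same decay, $t$-derivatives of these coefficients keep the rate.

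The main obstacle is bookkeeping: checking that no term is only $O(e^{-(2+\mu)t})$. That could happen if the linear-in-$v$ correction were not multiplied by $e^{-t}$ relative to the cone scale. I would handle this by writing everything in the rescaled variables $\hat v=e^{-t}v$, which is $O_2(e^{-(1+\mu)t})$. In these variables the rescaled surface $e^{-t}\Gamma$ is a $C^2$-small normal graph over $\mathcal C$ near $\{t=0\}$, of size $O(e^{-(1+\mu)t})$. Scale invariance gives $J_{F,\lambda M}=\lambda^{-2}J_{F,M}$ under pullback. The local formula for $J$ depends smoothly on the 2-jet of the graph function, and at the cone it gives the bracket in \eqref{eq:cone-Jacobi}. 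Together these yield the relative error $O(e^{-(1+\mu)t})$, hence an absolute error $O(e^{-(3+\mu)t})$.

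If the $O_2$ control in \eqref{eq:sigma-asymptotic} gives only two derivatives of $v$, the $J$ coefficients need $v'''$, which enters through $\nabla A$. To cover this I would invoke elliptic regularity for the ODE \eqref{eq:sigma-Euler}, which bootstraps decay to all derivatives and so supplies the "derivatives needed below".
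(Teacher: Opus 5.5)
Your proposal is correct, and it takes a somewhat different route from the paper. The paper's proof is the single sentence that the formula follows by fully expanding the pushforward of $\widetilde\psi(t,\cdot)$, i.e.\ a term-by-term computation of $g_\Gamma$, $\nu_\Gamma$, $A_\Gamma$ and $\Psi_F(\nu_\Gamma)$ in the coordinates $(t,q)$.

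You get the $O(2)\times O(2)$ structure from equivariance instead. You also get the rate without explicit bookkeeping, in three steps:
\begin{enumerate}
\item On the fixed annulus $|s|\le1$, the surface $e^{-T}\Gamma$ is the normal graph over $\mathcal C$ of $\hat v_T(s)=e^{-T}v(T+s)$, which is $O(e^{-(1+\mu)T})$ in $C^k$.
\item The coefficients of $J_{F,e^{-T}\Gamma}$ in $(s,q)$ are smooth functions of $(s,q)$ and of the $2$-jet of $\hat v_T$, and they reduce to \eqref{eq:cone-Jacobi} at $\hat v_T=0$.
\item The scaling identity $J_{F,\Gamma}=e^{-2T}J_{F,e^{-T}\Gamma}$ (pulled back by the dilation, with $\partial_t=\partial_s$) turns the relative error $O(e^{-(1+\mu)T})$ into $O(e^{-(3+\mu)t})$.
\end{enumerate}

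This treats all coefficients and their derivatives uniformly. It rules out a stray $O(e^{-(2+\mu)t})$ term by construction. It also isolates the one real input hidden in ``the derivatives needed below'': $\partial_t^k v=O(e^{-\mu t})$ for $k>2$. Your bootstrap through \eqref{eq:sigma-Euler} supplies this. Write $\sigma''=r^{-1}\mathcal G(\sigma/r,\sigma')$ with $\mathcal G$ analytic near $(1,1)$ and $\mathcal G(1,1)=0$, differentiate, and pass to the normal graph by the scale-invariant implicit function theorem.

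The explicit expansion would give more than the size of $\mathcal R_\Gamma$: it would give its leading part. For instance, the angular metric factors are proportional to $(e^t\pm v)^2$, so $\mathcal R_\Gamma$ need not be a multiple of $\Delta_\Lambda$. This is harmless for the statement and for its use in the appendix.

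One small correction: the coefficients of $J_{F,\Gamma}$ themselves involve only the $2$-jet of $v$. The first-order coefficients come from $D^3F(\nu)$ applied to $d\nu$, i.e.\ to $A_\Gamma$, and from Christoffel symbols, not from $\nabla A_\Gamma$. So $v'''$ and higher derivatives enter only through the derivatives of the coefficients.
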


\begin{proof}
By fully expanding the calculation of pushforward of $\tilde\psi(t,\cdot)$ we can get this asymptotic formula.
\end{proof}
We use one weighted H\"older convention throughout the rest of the paper.  For $k\in\N$, $\alpha\in(0,1)$ and $\gamma\in\R$, we write $C^{k,\alpha}_\gamma(\Gamma)$ for functions with size $O(\rho^{-\gamma})$, where the norm is taken on dyadic annuli after rescaling to unit size.  More precisely,
\begin{equation}\label{eq:weighted-Holder-norm}
 \|u\|_{C^{k,\alpha}_\gamma(\Gamma)}
 :=\|u\|_{C^{k,\alpha}(\Gamma\cap B_2)}
 +\sup_{R\ge1}R^\gamma
 \|u(R\,\cdot)\|_{C^{k,\alpha}(R^{-1}\Gamma\cap(B_2\setminus B_1))}.
\end{equation}
The norms on the rescaled hypersurfaces are taken with respect to the rescaled induced metrics.  By \eqref{eq:geometry-decay}, these norms are uniformly equivalent to the usual weighted intrinsic H\"older norms.

Next we fix
\begin{equation}\label{eq:gamma-window}
 \mu<\gamma<1-\mu.
\end{equation}

\begin{theorem}[Full weighted Jacobi inverse]\label{thm:full-Jacobi-inverse}
For any $\gamma\in(\mu,1-\mu)$ and $k\ge0$,
\begin{equation}\label{eq:full-Jacobi-isomorphism}
 J_{F,\Gamma}:
 C^{k+2,\alpha}_\gamma(\Gamma)
 \longrightarrow
 C^{k,\alpha}_{\gamma+2}(\Gamma)
\end{equation}
is an isomorphism.  Moreover,
\begin{equation}\label{eq:full-Jacobi-estimate}
 \|u\|_{C^{k+2,\alpha}_\gamma(\Gamma)}
 \le C_k
 \|J_{F,\Gamma}u\|_{C^{k,\alpha}_{\gamma+2}(\Gamma)}.
\end{equation}
\end{theorem}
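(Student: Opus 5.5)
The plan is the standard Fredholm theory for asymptotically conical ends: a model analysis on the cone, a parametrix, and then injectivity plus a duality argument to get surjectivity.

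\emph{Step 1: Fredholmness.} By \cref{prop:full-Jacobi-Gamma}, after multiplication by $e^{2t}$ the operator $J_{F,\Gamma}$ on the end is a small perturbation of the translation-invariant operator
\[
L_0=\tfrac{\sqrt2}{2}\bigl[4\phi''(1)(\partial_t^2+\partial_t)+\Delta_\Lambda+2\bigr]
\]
on the cylinder $\R\times\Lambda$. The perturbation has coefficients $O(e^{-(1+\mu)t})$ relative to $L_0$. The weighted spaces $C^{k,\alpha}_\gamma$ correspond to $e^{-\gamma t}C^{k,\alpha}$ on the cylinder.

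By \eqref{eq:full-indicial-roots}, the indicial roots are $\beta_j^\pm$. For $j=0$ they are $-\mu$ and $\mu-1$. The next eigenvalue of $-(\Delta_\Lambda+2)$ on the flat torus $\Lambda$ is $\lambda_1=2-2=0$, coming from $\cos(\sqrt2\theta)$. Its roots are $0$ and $-1$. All higher modes have $\beta^+>0$ and $\beta^-<-1$.

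Hence the weight $-\gamma\in(\mu-1,-\mu)$ lies strictly between $\beta_0^-$ and $\beta_0^+$. It is not an indicial root, and in fact no indicial root lies in the interval $(\mu-1,-\mu)$. The cylinder operator $L_0$ is therefore invertible on $e^{-\gamma t}C^{k,\alpha}$. I would prove this mode by mode, writing explicit Green's functions for each ODE $4\phi''(1)(\beta^2+\beta)-\lambda_j$, with Schauder estimates on unit cylinders to sum the modes.

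Patching this end inverse with local elliptic inverses on a compact core (via cutoffs) gives a parametrix with compact error, since the perturbation decays. So $J_{F,\Gamma}$ is Fredholm, and the weighted Schauder estimate
\[
\|u\|_{C^{k+2,\alpha}_\gamma}\le C\bigl(\|J u\|_{C^{k,\alpha}_{\gamma+2}}+\|u\|_{L^\infty(K)}\bigr)
\]
holds for some compact $K$.

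\emph{Step 2: Injectivity.} I expect this to be the main obstacle. Suppose $u\in C^{2,\alpha}_\gamma$ and $J u=0$. The dilation field $w=X\cdot\nu$ is a positive Jacobi field, since the dilations foliate. By \eqref{eq:sigma-asymptotic} it decays like $c\,\rho^{-\mu}$ with $c\neq0$, because $c_+>0$.

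Expanding $u$ on the end in modes, each mode decays at an indicial rate. Since $u=O(\rho^{-\gamma})$ with $\gamma>\mu$, the $\beta_0^+$ component is excluded, so in fact $u=O(\rho^{\mu-1})=o(w)$. Now consider $\sup u/w$. If it were positive, it would be attained at an interior point, because $u/w\to0$ at infinity. Then $u-tw$ is a nonpositive Jacobi field vanishing at a point, and the strong maximum principle (the operator is uniformly elliptic by \eqref{eq:Psi-diagonal}) forces $u\equiv tw$. This contradicts the decay of $u$. Applying the same argument to $-u$ gives $u\equiv0$.

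The delicate point is justifying the improved decay of $u$ through the mode expansion in the presence of $\mathcal R_\Gamma$. I would handle it by treating $\mathcal R_\Gamma u$ as a right-hand side decaying faster by $e^{-(1+\mu)t}$ and iterating the cylinder inverse.

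\emph{Step 3: Surjectivity and the estimate.} The Fredholm index is computed from the adjoint. Since $J$ is formally self-adjoint with respect to $\dd\cH^3$, the cokernel is the kernel on the dual weight $2-\gamma-2\cdot1$. In cylinder terms this is the reflected weight about $-1/2$, which is $\gamma'=1-\gamma\in(\mu,1-\mu)$. This weight lies in the same window, so by Step 2 the cokernel is trivial; this is why the window is symmetric about $1/2$.

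It is easiest to run this argument in weighted $L^2$ or Sobolev spaces and transfer to Hölder spaces by local Schauder estimates on dyadic annuli. The isomorphism follows. Finally, the estimate \eqref{eq:full-Jacobi-estimate} follows from the Schauder bound of Step 1, the compact term being absorbed by the standard contradiction-compactness argument using injectivity. Higher $k$ follow by differentiating along the rescaled annuli.
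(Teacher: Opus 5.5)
Your proposal is correct and follows essentially the same route as the paper: Fredholm theory from the cone indicial roots (none lie in $(\mu-1,-\mu)$), injectivity by comparison with the positive dilation Jacobi field $X\cdot\nu_\Gamma\simeq\rho^{-\mu}$ via the maximum principle (the paper phrases this as $\diver_\Gamma(\zeta_0^2\Psi_F(\nu)\nabla(u/\zeta_0))=0$), and surjectivity by duality with the kernel at the reflected weight $1-\gamma$, which lies in the same window. The improved decay $u=O(\rho^{\mu-1})$ that you flag as delicate is not needed, because $u=O(\rho^{-\gamma})$ with $\gamma>\mu$ already gives $u/w\to0$; also, your expression $2-\gamma-2\cdot1$ for the dual weight should read $1-\gamma$, which is the value you then correctly use.
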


We will follow the argument in \cite{PacardWei2013} and \cite{PacardNotes}. First, by the standard argument, lifting Holder space to Sobolev space, we can prove the Fredholm property of $J_{F,\Gamma}$. Next, the Jacobi field equation, 
$$\mathrm{div}_\Gamma[\Psi(\nu)(\nabla\zeta_0)]+\mathrm{tr}_\Gamma(\Psi(\nu)A^2_\Gamma)\zeta_0=0,$$
has fine elliptic property such that we can use maximum principle to prove the injectivity for $\gamma>\mu$. Then it implies the isomorphic result for $\gamma\in(\mu,1-\mu)$. The detailed proof is given in \cref{app:Jacobi}.  

\subsection{The invariant vertical reduction}
The construction below is $O(2)\times O(2)$-invariant.  In this sector it is more convenient to use the vertical graph variable in \eqref{eq:Gamma-graph}.  Linearizing the Euler equation \eqref{eq:sigma-Euler-div} at $\sigma$ gives \begin{equation}\label{eq:Jacobi-coefficients}
 p_\Gamma(r):=r\sigma(r)\phi''(\sigma'(r)),
 \qquad
 \mathfrak q_\Gamma(r)
 :=\frac{\dd}{\dd r}\left[r\phi'(\sigma'(r))\right],
 \qquad
 m_\Gamma(r):=r\sigma(r),
\end{equation}
and we define
\begin{equation}\label{eq:Jacobi-operator}
 J_\Gamma f
 :=-\frac1{m_\Gamma(r)}
 \left[(p_\Gamma(r)f'(r))'
 +\mathfrak q_\Gamma(r)f(r)\right].
\end{equation}
Smoothness at the axis requires $f'(0)=0$.  The corresponding quadratic form
is
\begin{equation}\label{eq:Jacobi-quadratic}
 \delta^2\mathcal A_F(\Gamma)[f,f]
 =4\pi^2\int_0^\infty
 \left[p_\Gamma(f')^2-\mathfrak q_\Gamma f^2\right]\dd r
 =4\pi^2\int_0^\infty m_\Gamma fJ_\Gamma f\,\dd r.
\end{equation}

Dilation invariance of $\Gamma$ gives the positive vertical Jacobi field
\begin{equation}\label{eq:dilation-Jacobi}
 f_0(r):=\sigma(r)-r\sigma'(r)>0,
 \qquad
 J_\Gamma f_0=0.
\end{equation}
By \eqref{eq:sigma-asymptotic},
\begin{equation}\label{eq:dilation-asymptotic}
 f_0(r)=c_+(1+\mu)r^{-\mu}+O(r^{\mu-1}).
\end{equation}

The relation with the full operator is simple.  If a vertical variation is
written as a Euclidean normal variation, then its normal height is
\begin{equation}\label{eq:linear-gauge}
 \zeta=\omega_0 f,
 \qquad
 \omega_0(r):=\frac1{\sqrt{1+\sigma'(r)^2}}.
\end{equation}
Polarizing the two second-variation formulas gives, on invariant functions,
\begin{equation}\label{eq:vertical-full-conjugacy}
 J_\Gamma f=-J_{F,\Gamma}(\omega_0f).
\end{equation}
Notice that $\omega_0$ is positive, smooth, and converges to $1/\sqrt2$ at
infinity.

\begin{corollary}[Invariant vertical Jacobi inverse]\label{thm:Jacobi-inverse}
For any $\gamma\in(\mu,1-\mu)$ and $k\ge0$,
\begin{equation}\label{eq:Jacobi-isomorphism}
 J_\Gamma:
 C^{k+2,\alpha}_\gamma
 \longrightarrow
 C^{k,\alpha}_{\gamma+2}
\end{equation}
is an isomorphism on axis-smooth invariant functions, and
\begin{equation}\label{eq:Jacobi-inverse-bound}
 \|f\|_{C^{k+2,\alpha}_\gamma}
 \le C_k\|J_\Gamma f\|_{C^{k,\alpha}_{\gamma+2}}.
\end{equation}
\end{corollary}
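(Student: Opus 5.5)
The plan is to deduce the corollary from \cref{thm:full-Jacobi-inverse} via the conjugacy \eqref{eq:vertical-full-conjugacy}, restricted to the $O(2)\times O(2)$-invariant sector. Identify an invariant function $f(r)$ on $\Gamma$ (with $f'(0)=0$, which is exactly smoothness across the axis $\{x=0\}$) with the function $f(|x|)$ on $\Gamma$. The multiplication operator $M_{\omega_0}:f\mapsto\omega_0 f$ is an isomorphism of $C^{k,\alpha}_\gamma(\Gamma)$ for every $k$ and $\gamma$. This holds because $\omega_0$ is smooth and positive, with $\omega_0\to1/\sqrt2$. By \eqref{eq:sigma-asymptotic}, $\sigma'(r)=1+O(r^{-1-\mu})$, and each radial derivative gains a factor $r^{-1}$. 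Hence $\omega_0$ and $1/\omega_0$ are bounded in $C^{k,\alpha}_0(\Gamma)$, i.e.\ in the scale-invariant norm \eqref{eq:weighted-Holder-norm}. Then $J_\Gamma=-J_{F,\Gamma}\circ M_{\omega_0}$ on invariant functions, so everything reduces to the full operator.

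The first step is to check that $J_{F,\Gamma}$ commutes with the $O(2)\times O(2)$ action, which is stated in \cref{prop:full-Jacobi-Gamma}. Consequently $J_{F,\Gamma}$ maps invariant functions in $C^{k+2,\alpha}_\gamma$ to invariant functions in $C^{k,\alpha}_{\gamma+2}$. Injectivity on the invariant subspace is inherited directly from \cref{thm:full-Jacobi-inverse}.

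For surjectivity, take an invariant $h\in C^{k,\alpha}_{\gamma+2}$ and let $u=J_{F,\Gamma}^{-1}h$. For any $g\in O(2)\times O(2)$, the function $u\circ g$ also solves $J_{F,\Gamma}(u\circ g)=h$ and lies in the same space, since $g$ is an isometry of $\Gamma$ preserving $\rho$. Uniqueness then forces $u\circ g=u$. So $u$ is invariant, and it is smooth as a function on $\Gamma$, hence axis-smooth. Setting $f=-u/\omega_0$ gives $J_\Gamma f=h$, and the estimate \eqref{eq:Jacobi-inverse-bound} follows from \eqref{eq:full-Jacobi-estimate} together with the bounds on $M_{\omega_0}^{\pm1}$.

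The main point to be careful about is the meaning of the norms in the corollary. The spaces $C^{k,\alpha}_\gamma$ there should be read as the restriction of \eqref{eq:weighted-Holder-norm} to invariant functions on $\Gamma$. If instead one wants one-variable weighted norms in $r$, one has to check that they are equivalent on invariant functions. This holds on the dyadic annuli, where the orbits are tori of size comparable to $R$ and the quotient map is a uniform Riemannian submersion after rescaling. Near the axis, equivalence holds because axis-smooth even functions of $r$ correspond exactly to smooth invariant functions on the compact piece $\Gamma\cap B_2$.

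I also expect to have to verify the conjugacy \eqref{eq:vertical-full-conjugacy} itself, including its sign. The approach is to polarize \eqref{eq:Jacobi-quadratic} against \eqref{eq:full-second-variation} with $\zeta=\omega_0 f$. The area elements differ by the factor $4\pi^2 r\sigma\,\omega_0^{-1}$, and this weight must be absorbed consistently with the normalization $m_\Gamma=r\sigma$ in \eqref{eq:Jacobi-operator}. If this produced an extra positive weight factor $w$, with $J_\Gamma f=-w\,J_{F,\Gamma}(\omega_0 f)$, the argument would still go through. Such a $w$ is smooth, positive and bounded in $C^{k,\alpha}_0$ with bounded reciprocal, so it does not change the weight $\gamma+2$.
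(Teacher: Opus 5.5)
Your argument is correct and is essentially the paper's proof: $O(2)\times O(2)$-equivariance of $J_{F,\Gamma}$ and uniqueness in \cref{thm:full-Jacobi-inverse} make the solution for invariant data invariant, and the conjugacy \eqref{eq:vertical-full-conjugacy}, with multiplication by $\omega_0^{\pm1}$ bounded on the weighted spaces, transfers both the isomorphism and the estimate. Your hedge about an extra weight is unnecessary: on invariant functions $\dd\cH^3=4\pi^2 r\sigma\,\omega_0^{-1}\dd r$, and this $\omega_0^{-1}$ cancels the $\omega_0$ in the test function $\omega_0 g$, so polarization gives exactly $J_\Gamma f=-J_{F,\Gamma}(\omega_0 f)$, i.e.\ $w\equiv1$.
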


\begin{proof}
The full operator commutes with the $O(2)\times O(2)$ action.  Hence, for an
invariant right-hand side, the unique solution given by
\cref{thm:full-Jacobi-inverse} is invariant.  Multiplication by $\omega_0$
and $\omega_0^{-1}$ preserves all the weighted H\"older spaces above, so
\eqref{eq:vertical-full-conjugacy} gives the result.
\end{proof}

For a small finite vertical graph
\[
 \Gamma_f:=\{|y|=\sigma(|x|)+f(|x|)\},
\]
the same hypersurface can be written as a Euclidean normal graph over
$\Gamma$.  If $\zeta_f$ denotes its normal height, then
\begin{equation}\label{eq:gauge-change}
 \|\zeta_f-\omega_0f\|_{C^{2,\alpha}_\gamma}
 \le C\|f\|_{C^{2,\alpha}_\gamma}^2
\end{equation}
for $\|f\|_{C^{2,\alpha}_\gamma}$ small, together with the corresponding
local Lipschitz difference estimate.  We use $J_\Gamma$ in the reduced
equation below.  Thus only \cref{thm:Jacobi-inverse} is needed for the
construction; \cref{thm:full-Jacobi-inverse} records the Jacobi theory before
imposing symmetry.

\section{Anisotropic tubular gluing and residual projection}\label{sec:background}
Fix $\delta_0>0$ sufficiently small and assume throughout this section that
\[
\|f\|_{C^{2,\alpha}_\gamma}\le \delta_0.
\]
We follow the gluing scheme of Pacard--Wei
\cite{PacardWei2013}. For a small $O(2)\times O(2)$-invariant
vertical graph
\[
\Gamma_f=\{|y|=\sigma(|x|)+f(|x|)\},
\]
we glue the one-dimensional heteroclinic near $\Gamma_f$ to the pure
phases $\pm1$ away from the interface, obtaining an approximate solution
$U_{\eps,f}$. We then look for an exact solution in the form
\begin{equation}\label{eq:gluing-ansatz}
u=U_{\eps,f}+v.
\end{equation}
Set the residual as follows
\begin{equation}\label{eq:N-def}
N_\eps(u):=-\eps^2\diver a(Du)+W'(u).
\end{equation}

The usual Euclidean Fermi coordinate is not well adapted to the anisotropic
equation. If $z$ denotes the Euclidean signed distance to $\Gamma_f$, then
$F(Dz)$ is not constant in general, and the normal part of
$N_\eps(q(z/\eps))$ does not reduce to the one-dimensional equation.
We instead use an anisotropic signed distance $r_f$ satisfying
\[
F(Dr_f)=1.
\]
With this choice, the normal part cancels exactly and the remaining error is
given by the anisotropic mean curvature of the level sets of $r_f$. At the
interface,
\[
H_F(\Gamma_f)
=
J_\Gamma f+\mathcal Q_\Gamma(f),
\]
so the projection onto the translation mode $q'$ gives the reduced equation
for $f$.

Away from the transition region, $U_{\eps,f}$ is constant and
$DU_{\eps,f}=0$, so the usual global linear correction of
\cite{PacardWei2013} is not available. We keep the equation in divergence
form and construct the fiber-orthogonal correction variationally in
\cref{sec:fixed-interface,sec:sharp-estimates}.

Let $X_f:\Gamma\to\Gamma_f$ be the vertical graph parametrization, let $\nu_f$
be the Euclidean unit normal to $\Gamma_f$, and set
\begin{equation}\label{eq:nf-def}
 n_f(Y):=\frac{\nu_f(X_f(Y))}{F(\nu_f(X_f(Y)))}.
\end{equation}
Then $F(n_f)=1$.  The anisotropic normal map is
\begin{equation}\label{eq:Theta-def}
 \Theta_f(Y,t):=X_f(Y)+t\,DF(n_f(Y)).
\end{equation}
Since $n_f\cdot DF(n_f)=F(n_f)=1,$ the vector $DF(n_f)$ is transverse to $\Gamma_f$.  By the inverse-function
theorem, there is $\eta>0$ such that, for $f$ sufficiently small,
\begin{equation}\label{eq:tube-domain}
 \Theta_f:
 \{(Y,t):Y\in\Gamma,\ |t|<\eta\rho(Y)\}
 \longrightarrow \mathcal T_{\eta,f}
\end{equation}
is a diffeomorphism onto its image.  We define the anisotropic signed distance
$r_f$ in this tube by
\begin{equation}\label{eq:rf-def}
 r_f(\Theta_f(Y,t))=t.
\end{equation}
Differentiating this identity and using $D^2F(n_f)n_f=0$ gives
\begin{equation}\label{eq:eikonal}
 Dr_f(\Theta_f(Y,t))=n_f(Y),
 \qquad
 F(Dr_f)=1.
\end{equation}

The one-dimensional profile is
\[
q(s):=\tanh\!\left(\frac{s}{\sqrt2}\right),
\qquad
-q''+W'(q)=0,
\qquad
q'>0,
\qquad
q(\pm\infty)=\pm1.
\]
Its translation mode is $q'$, and for $|s|\ge1$, $|\partial_s^k(q-\sgn s)|+|\partial_s^kq'|
\le C_ke^{-\sqrt2|s|}.$

The distance $r_f$ is only defined inside the tube $\mathcal T_{\eta,f}$, while both the
heteroclinic and its translation mode are naturally defined on the whole
normal line.  We therefore cut them off far out in the exponential tail.

Fix $0<\delta_*<1$
and a small constant $c_*>0$, and define
\begin{equation}\label{eq:remote-scale}
 T_\eps(Y):=c_*\eps^{\delta_*-1}\rho(Y).
\end{equation}
Its physical width is $\eps T_\eps(Y)=c_*\eps^{\delta_*}\rho(Y)\ll\rho(Y),$ after decreasing $c_*$, all cutoffs below lie strictly inside
$\mathcal T_{\eta,f}$.  At the same time $T_\eps(Y)\to\infty$ as
$\eps\to0$, and hence every cutoff error is super-algebraically small.

Choose $\chi\in C^\infty(\R;[0,1])$ such that
$\chi=0$ on $(-\infty,0]$ and $\chi=1$ on $[1,\infty)$.
We clip the heteroclinic only in its far tail by setting
\begin{equation}\label{eq:clipped-profile}
q_{\eps,Y}(s):=
\begin{cases}
q(s),
& |s|\le 2T_\eps(Y),\\[1mm]
q(s)
+\sgn(s)\,
\chi\bigl(|s|-2T_\eps(Y)\bigr)
\bigl(1-\sgn(s)q(s)\bigr),
& 2T_\eps(Y)<|s|<2T_\eps(Y)+1,\\[1mm]
\sgn(s),
& |s|\ge 2T_\eps(Y)+1.
\end{cases}
\end{equation}
Thus the standard profile is unchanged near the interface and becomes
exactly equal to the pure phases before reaching the boundary of the
anisotropic tube.

Let $\Omega_f^\pm$ denote the two sides of $\Gamma_f$, oriented so that
$r_f>0$ on $\Omega_f^+\cap\mathcal T_{\eta,f}$.  We define the global
approximate solution by
\begin{equation}\label{eq:background-def}
U_{\eps,f}(x):=
\begin{cases}
q_{\eps,Y}(s),
& x=\Theta_f(Y,\eps s)\in\mathcal T_{\eta,f},\\[1mm]
+1,
& x\in\Omega_f^+\setminus\mathcal T_{\eta,f},\\[1mm]
-1,
& x\in\Omega_f^-\setminus\mathcal T_{\eta,f}.
\end{cases}
\end{equation}
This gives a smooth function on $\R^4$.

We call the region $|s|\le 2T_\eps(Y)$ the \emph{unclipped region}. There,
\[
U_{\eps,f}\bigl(\Theta_f(Y,\eps s)\bigr)
=
q(s)
=
q\!\left(\frac{r_f}{\eps}\right).
\]
Outside this region, the residual vanishes where $U_{\eps,f}=\pm1$
and is super-algebraically small in the clipping strip.

For an oriented hypersurface $M$ with Euclidean unit normal $\nu_M$, set
\begin{equation}\label{eq:anisotropic-mean-curvature}
H_F(M):=\diver_M DF(\nu_M).
\end{equation}
For $|t|<\eta\rho(Y)$, let $\Gamma_{f,t}:=\{r_f=t\}$ and set
\begin{equation}\label{eq:Hf-def}
H_f(Y,t)
:=
H_F(\Gamma_{f,t})\bigl(\Theta_f(Y,t)\bigr).
\end{equation}

\begin{proposition}\label{prop:exact-residual}
In the unclipped region $|s|\le 2T_\eps(Y)$,
\begin{equation}\label{eq:exact-residual}
N_\eps(U_{\eps,f})
\bigl(\Theta_f(Y,\eps s)\bigr)
=
-\eps q'(s)H_f(Y,\eps s).
\end{equation}
\end{proposition}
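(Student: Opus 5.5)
In the unclipped region we have $U_{\eps,f}=q(r_f/\eps)$, so the plan is to compute $N_\eps$ directly by the chain rule, using the eikonal identity \eqref{eq:eikonal} and the homogeneity of $H$.

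First, compute the gradient. Since $Dr_f\neq0$ in the tube and $q'>0$,
\[
DU_{\eps,f}=\eps^{-1}q'(r_f/\eps)\,Dr_f .
\]
This gradient is nonzero, so $a$ is smooth along it and no regularity issue arises.

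Next, simplify the flux. Because $H=\frac12F^2$ is positively two-homogeneous, $a=DH=F\,DF$ is positively one-homogeneous. Also $DF$ is zero-homogeneous, and $F(Dr_f)=1$. Hence
\[
a(DU_{\eps,f})=\eps^{-1}q'(r_f/\eps)\,F(Dr_f)\,DF(Dr_f)=\eps^{-1}q'(r_f/\eps)\,DF(Dr_f).
\]

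Then take the divergence:
\[
\eps^2\diver a(DU_{\eps,f})
=\eps^2\Bigl[\eps^{-2}q''(r_f/\eps)\,Dr_f\cdot DF(Dr_f)+\eps^{-1}q'(r_f/\eps)\,\diver\bigl(DF(Dr_f)\bigr)\Bigr].
\]
By Euler's identity, $Dr_f\cdot DF(Dr_f)=F(Dr_f)=1$, so the first term is exactly $q''$. Since $-q''+W'(q)=0$, it cancels with $W'(q)$. This gives
\[
N_\eps(U_{\eps,f})=-\eps q'(s)\,\diver\bigl(DF(Dr_f)\bigr),\qquad s=r_f/\eps .
\]

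The main remaining step is to identify $\diver_{\R^4}(DF(Dr_f))$ at $\Theta_f(Y,t)$ with $H_F(\Gamma_{f,t})=\diver_{\Gamma_{f,t}}DF(\nu)$. Here $\nu=Dr_f/|Dr_f|$ is the unit normal of the level set $\Gamma_{f,t}$, oriented consistently with $r_f$ increasing. By zero-homogeneity, $DF(Dr_f)=DF(\nu)$, so the vector field $V:=DF(\nu)$ is defined in the tube. Split
\[
\diver V=\diver_{\Gamma_{f,t}}V+\langle D_\nu V,\nu\rangle .
\]
For the normal term, differentiate along the normal direction. The field $\nu$ is a unit field, so $D_\nu\nu\perp\nu$. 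Also $D_\nu V=D^2F(\nu)D_\nu\nu$, and $D^2F(\nu)$ is symmetric with $D^2F(\nu)\nu=0$. Therefore
\[
\langle D_\nu V,\nu\rangle=\langle D_\nu\nu,\,D^2F(\nu)\nu\rangle=0 .
\]
Thus $\diver V=\diver_{\Gamma_{f,t}}DF(\nu_{\Gamma_{f,t}})=H_f(Y,t)$. Evaluating at $t=\eps s$ gives \eqref{eq:exact-residual}.

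The only delicate point is to make sure that the orientation convention in \eqref{eq:anisotropic-mean-curvature} matches $\nu=Dr_f/|Dr_f|$. One should also check that the level sets $\{r_f=t\}$ are smooth hypersurfaces inside $\mathcal T_{\eta,f}$, which follows from the diffeomorphism \eqref{eq:tube-domain}. I expect no real obstacle beyond this bookkeeping.
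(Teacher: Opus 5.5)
Your proof is correct and follows the same route as the paper: the chain rule, one-homogeneity of $a$ together with $F(Dr_f)=1$, Euler's identity $Dr_f\cdot DF(Dr_f)=1$, and cancellation through $-q''+W'(q)=0$. You also justify a step the paper leaves implicit, namely that the ambient divergence $\diver DF(Dr_f)$ equals $H_f(Y,t)$, by showing that its normal component vanishes because $D^2F(\nu)\nu=0$.
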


\begin{proof}
In this region $U_{\eps,f}=q(r_f/\eps)$. Since $a=DH$ is
one-homogeneous and $F(Dr_f)=1$,
\[
a(DU_{\eps,f})
=
\frac{q'(s)}{\eps}DF(Dr_f),
\]
and hence
\[
\diver a(DU_{\eps,f})
=
\frac{q''(s)}{\eps^2}
+
\frac{q'(s)}{\eps}H_f(Y,\eps s).
\]
The identity \eqref{eq:exact-residual} follows from
$-q''+W'(q)=0$.
\end{proof}

The factor $q'$ in \eqref{eq:exact-residual} is the translation mode.
In the usual Fermi-coordinate construction one projects against $q'$
along the normal variable. Here the anisotropic signed distance $r_f$
is defined only in the tube $\mathcal T_{\eta,f}$, so we cut off $q'$
before reaching its boundary.

Choose $\chi_Z\in C_c^\infty((-1,1))$ even and equal to $1$ on
$[-1/2,1/2]$, and define
\begin{equation}\label{eq:Z-def}
Z_{\eps,f}\bigl(\Theta_f(Y,\eps s)\bigr)
:=
\chi_Z\!\left(\frac{s}{T_\eps(Y)}\right)q'(s),
\end{equation}
with $Z_{\eps,f}=0$ outside the tube. Then
\[
Z_{\eps,f}\bigl(\Theta_f(Y,\eps s)\bigr)=q'(s)
\quad\text{for }|s|\le \frac12T_\eps(Y),
\qquad
\supp Z_{\eps,f}\subset\{|s|<T_\eps(Y)\}.
\]
Since $T_\eps(Y)\to\infty$ and $q'$ decays exponentially, the cutoff
errors are super-algebraically small.

At $t=0$, $\Gamma_{f,t}=\Gamma_f$, and the expansion of the anisotropic
mean curvature at $\Gamma$ gives
\begin{equation}\label{eq:H-interface-expansion}
H_f(Y,0)
=
J_\Gamma f+\mathcal Q_\Gamma(f),
\end{equation}
where $\mathcal Q_\Gamma(0)=D\mathcal Q_\Gamma(0)=0$ and
\begin{equation}\label{eq:QGamma-Lipschitz}
\|\mathcal Q_\Gamma(f_1)-\mathcal Q_\Gamma(f_2)\|_{C^{0,\alpha}_{\gamma+2}}
\le
C\bigl(
\|f_1\|_{C^{2,\alpha}_\gamma}
+
\|f_2\|_{C^{2,\alpha}_\gamma}
\bigr)
\|f_1-f_2\|_{C^{2,\alpha}_\gamma}.
\end{equation}

For $|t|\le\eta\rho(Y)/2$,
\begin{equation}\label{eq:H-parallel-expansion}
H_f(Y,t)
=
H_f(Y,0)
+t\,\partial_tH_f(Y,0)
+O\!\left(t^2\rho(Y)^{-3}\right),
\end{equation}
with
\begin{equation}\label{eq:H-parallel-bounds}
|\partial_tH_f(Y,0)|
\le C\rho(Y)^{-2},
\qquad
|\partial_t^2H_f(Y,t)|
\le C\rho(Y)^{-3}.
\end{equation}
Substituting $t=\eps s$ into \eqref{eq:exact-residual} gives
\begin{align}
N_\eps(U_{\eps,f})
\bigl(\Theta_f(Y,\eps s)\bigr)
&=
-\eps q'(s)
\left[J_\Gamma f+\mathcal Q_\Gamma(f)\right]
\notag\\
&\quad
-\eps^2s q'(s)\partial_tH_f(Y,0)
+O\!\left(
\eps^3s^2q'(s)\rho(Y)^{-3}
\right).
\label{eq:background-residual-expansion}
\end{align}

For the projection onto the translation mode, write
\begin{equation}\label{eq:tube-Jacobian}
\Theta_f^*(\dd x)
=
\mathcal J_f(Y,t)\,\dd\mu_\Gamma(Y)\dd t.
\end{equation}
For $|t|\le\eta\rho(Y)/2$,
\begin{equation}\label{eq:Jacobian-bounds}
C^{-1}\le \mathcal J_f(Y,t)\le C,
\qquad
|\partial_t\mathcal J_f(Y,t)|
\le C\rho(Y)^{-1}.
\end{equation}

For any $\mathscr R$ whose pullback is integrable in $s$, We use the normalized fiberwise projection onto $Z_{\eps,f}$:
\begin{equation}\label{eq:projection-def}
\Pi_{\eps,f}\mathscr R(Y)
:=
\frac{
\displaystyle
\int_\R
(\mathscr R\circ\Theta_f)(Y,\eps s)\,
Z_{\eps,f}\bigl(\Theta_f(Y,\eps s)\bigr)\,
\mathcal J_f(Y,\eps s)\,\dd s
}{
\displaystyle
\int_\R
Z_{\eps,f}\bigl(\Theta_f(Y,\eps s)\bigr)^2\,
\mathcal J_f(Y,\eps s)\,\dd s
}.
\end{equation}
The denominator is uniformly bounded above and below, and
\begin{equation}\label{eq:projection-Zg}
\Pi_{\eps,f}(Z_{\eps,f}g)=g.
\end{equation}

\begin{proposition}[Projected residual]\label{prop:projected-background}
We have
\begin{equation}\label{eq:projected-background}
\Pi_{\eps,f}N_\eps(U_{\eps,f})
=
-\eps\left[J_\Gamma f+\mathcal Q_\Gamma(f)\right]
+\eps^3G_{\eps,f},
\end{equation}
where
\begin{align}
\|G_{\eps,f}\|_{C^{0,\alpha}_{\gamma+2}}
&\le C,
\label{eq:G-bound}\\
\|G_{\eps,f_1}-G_{\eps,f_2}\|_{C^{0,\alpha}_{\gamma+2}}
&\le
C\|f_1-f_2\|_{C^{2,\alpha}_\gamma}.
\label{eq:G-difference}
\end{align}
Set
\begin{equation}\label{eq:Eperp-def}
E^\perp_{\eps,f}
:=
N_\eps(U_{\eps,f})
-
Z_{\eps,f}\Pi_{\eps,f}N_\eps(U_{\eps,f}).
\end{equation}
In the unclipped region,
\begin{equation}\label{eq:Eperp-leading}
\left|
E^\perp_{\eps,f}
\bigl(\Theta_f(Y,\eps s)\bigr)
\right|
\le
C\eps^2(1+|s|)q'(s)\rho(Y)^{-2}
+
O_N(\eps^N\rho(Y)^{-N}),
\end{equation}
and $E^\perp_{\eps,f}$ is super-algebraically small outside this region.
\end{proposition}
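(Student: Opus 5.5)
We plan to split the fiber integral defining $\Pi_{\eps,f}N_\eps(U_{\eps,f})$ into three pieces: the region $|s|\le\frac12T_\eps(Y)$, where $Z_{\eps,f}=q'$; the region $\frac12T_\eps\le|s|\le 2T_\eps+1$, where the cutoffs act; and the complement, where $Z_{\eps,f}=0$. On the second piece every integrand contains a factor $q'(s)$ or $1-|q(s)|$, both $O(e^{-\sqrt2 T_\eps/2})=O_N(\eps^N\rho^{-N})$ because $T_\eps=c_*\eps^{\delta_*-1}\rho$. These errors, together with their H\"older seminorms and $f$-derivatives, can be absorbed into $\eps^3G_{\eps,f}$. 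On the main region we insert the expansion \eqref{eq:background-residual-expansion} and compute term by term. We first expand $\mathcal J_f(Y,\eps s)=\mathcal J_f(Y,0)+\eps s\,\partial_t\mathcal J_f(Y,0)+O(\eps^2s^2\rho^{-2})$ using \eqref{eq:Jacobian-bounds}, together with the analogous second-derivative bound, which follows from the tube geometry and \eqref{eq:geometry-decay}. Then:

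\begin{itemize}
\item The leading term $-\eps q'[J_\Gamma f+\mathcal Q_\Gamma(f)]$ is independent of $s$. Dividing by the denominator reproduces exactly $-\eps[J_\Gamma f+\mathcal Q_\Gamma(f)]$, by \eqref{eq:projection-Zg}, up to the super-algebraic cutoff errors.
\item The $\eps^2$ term $-\eps^2 s q'\partial_tH_f(Y,0)$ integrates against $q'(s)\mathcal J_f(Y,0)$ to zero, because $s(q')^2$ is odd. What survives is the product with $\eps s\,\partial_t\mathcal J_f$, of size $\eps^3\rho^{-2}\cdot\rho^{-1}$.
\item The remainder $O(\eps^3s^2q'\rho^{-3})$ is also $\eps^3\rho^{-3}$.
\end{itemize}

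Hence $G_{\eps,f}=O(\rho^{-3})\subset C^{0}_{\gamma+2}$ since $\gamma<1$. The oddness cancellation is the crucial point: without it the projection would carry an $\eps^2\rho^{-2}$ term, which the reduced equation could not absorb. The same parity gives $\int s (q')^2\,\dd s=0$ in the denominator expansion, so the denominator equals $\mathcal J_f(Y,0)\int(q')^2+O(\eps^2\rho^{-2})$. The correction this induces in the quotient is again of order $\eps\cdot\eps^2\rho^{-2}\cdot|J_\Gamma f|\lesssim\eps^3\rho^{-4-\gamma}$.

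For the H\"older norm and the Lipschitz bound \eqref{eq:G-difference}, we will work on dyadic annuli rescaled to unit size, as in \eqref{eq:weighted-Holder-norm}. All quantities $H_f,\partial_tH_f,\mathcal J_f,\Theta_f$ are smooth functions of $Y$ and of the $2$-jet of $f$, with the scaling bounds \eqref{eq:geometry-decay}. Hence each coefficient has a $C^{0,\alpha}$ seminorm on the scaled annulus controlled by its sup times the appropriate power of $\rho$. Its derivative in $f$ is bounded by $\|f_1-f_2\|_{C^{2,\alpha}_\gamma}$ times the same weight. To be careful we would write $G_{\eps,f}$ as an explicit integral $\eps^{-3}\int(\cdots)\,\dd s$ of the Taylor remainders, in integral form, and differentiate under the integral in $f$.

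The main obstacle is checking that the Taylor remainders in $t$ have the claimed $\rho$-decay uniformly in $f$. This requires the bounds \eqref{eq:H-parallel-bounds} and second-order bounds on $\mathcal J_f$, with Lipschitz dependence on $f$ in $C^{2,\alpha}_\gamma$. We would derive them by differentiating the eikonal structure \eqref{eq:eikonal} along the flow $t\mapsto\Theta_f(Y,t)$, a Riccati-type equation for the shape operator of $\Gamma_{f,t}$.

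For \eqref{eq:Eperp-leading}, we subtract $Z_{\eps,f}\Pi_{\eps,f}N_\eps$ from \eqref{eq:background-residual-expansion}. The $-\eps q'[J_\Gamma f+\mathcal Q_\Gamma]$ terms cancel on $|s|\le T_\eps/2$. What remains is $-\eps^2 sq'\partial_tH_f(Y,0)$ plus $\eps^3$ terms, each bounded by $C\eps^2(1+|s|)q'\rho^{-2}$. Outside that region both pieces are exponentially small in $T_\eps$.
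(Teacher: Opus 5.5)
Your proposal is correct and follows essentially the same route as the paper. You insert \eqref{eq:background-residual-expansion} into the projection, recover $-\eps[J_\Gamma f+\mathcal Q_\Gamma(f)]$ from $Z_{\eps,f}=q'$ away from the remote cutoff, and kill the $\eps^2$ term by the parity of $s\,q'(s)Z_{\eps,f}$ after freezing $\mathcal J_f$ at $t=0$; you then bound the Jacobian variation, the quadratic Taylor remainder and the cutoff errors by $O(\eps^3\rho^{-3})$, and subtract to obtain \eqref{eq:Eperp-leading}. Your second-order expansion of $\mathcal J_f$ and the denominator correction are not needed. The first-order bound $|\partial_t\mathcal J_f|\le C\rho^{-1}$ already gives the $\eps^3\rho^{-3}$ estimate, and the ratio reproduces the leading term exactly up to super-algebraic cutoff errors.
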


\begin{proof}
Insert \eqref{eq:background-residual-expansion} into
\eqref{eq:projection-def}. Since
$Z_{\eps,f}\circ\Theta_f=q'$ away from the remote cutoff, the first term
gives $-\eps\left[J_\Gamma f+\mathcal Q_\Gamma(f)\right]$ up to a super-algebraically small error. The term of order $\eps^2$ vanishes after freezing
$\mathcal J_f(Y,\eps s)$ at $t=0$, since
\[
\int_\R
s\,q'(s)\,
Z_{\eps,f}\bigl(\Theta_f(Y,\eps s)\bigr)\,\dd s
=0.
\]
Here both $q'$ and $Z_{\eps,f}\circ\Theta_f$ are even. Using
\[
\mathcal J_f(Y,\eps s)
=
\mathcal J_f(Y,0)
+
O\!\left(\eps|s|\rho(Y)^{-1}\right)
\]
and \eqref{eq:H-parallel-bounds}, the remaining contribution is
$O(\eps^3\rho^{-3})$. The quadratic term in
\eqref{eq:H-parallel-expansion} and the cutoff errors have the same or
smaller order. This gives
\eqref{eq:projected-background} and \eqref{eq:G-bound}; the difference
estimate follows from \eqref{eq:QGamma-Lipschitz} and the dependence of the
tubular coordinates on $f$.

Subtracting the projected component gives
\eqref{eq:Eperp-leading}.
\end{proof}

\section{Constrained variational correction}
\label{sec:fixed-interface}

We now fix the interface and assume
\begin{equation}\label{eq:f-ball}
\|f\|_{C^{2,\alpha}_\gamma}\le M\eps^2,
\end{equation}
where $M$ is independent of $\eps$.  Write
\[
U:=U_{\eps,f},
\qquad
E:=N_\eps(U).
\]

In the isotropic construction of Pacard--Wei
\cite{PacardWei2013}, the correction is obtained from the linearization
around the approximate solution.  Near the interface, the one-dimensional
operator $-\partial_s^2+W''(q)$ has kernel spanned by $q'$, and is coercive on its orthogonal complement. Away from the interface it approaches
$-\eps^2\Delta+W''(\pm1)=-\eps^2\Delta+2$.

For the anisotropic equation, such a global linearization is not available.
The choice of $U$ in \cref{sec:background}, however, leaves only a small
error near the anisotropic tube and is exactly equal to $\pm1$ away from
the transition region.  With $f$ fixed, we therefore solve for the
fiber-orthogonal correction directly from the energy.

For $v$ with zero trace on $\partial\Omega$,
\[
\eps\bigl(E_\eps(U+v;\Omega)-E_\eps(U;\Omega)\bigr)
=
\int_\Omega
\left[
\eps^2B_H(DU+Dv,DU)
+
D_W(U,v)
+
Ev
\right]\dd x,
\]
where
\[
D_W(U,v):=W(U+v)-W(U)-W'(U)v.
\]
The constraint in the $q'$ direction removes the translation mode.  The
one-dimensional spectral gap, together with the convexity estimates for
$H$, makes this relative energy coercive on the fiber-orthogonal class.
We can then minimize it directly.

The minimizer is critical only with respect to fiber-orthogonal variations.
Its Euler--Lagrange equation therefore has the form
\[
N_{\eps,\tau_*}(U+v)=Z_{\eps,f}\ell,
\]
where the multiplier $\ell$ is the remaining component in the translation
direction.  The interface $f$ will be chosen in \cref{sec:reduction} so that
$\ell=0$.

There is one minor modification in carrying out this minimization.  The
relative potential $D_W(U,v)$ is not coercive for arbitrary $v$, so we
modify it for large $|v|$. In \cref{sec:sharp-estimates} we prove
\[
\|v_{\eps,f}\|_{L^\infty(\R^4)}\le C\eps^2,
\]
so the modification does not affect the solution for $\eps$ small.

\begin{definition}\label{def:fiber-orthogonal}
A function $v\in H^1_{\loc}(\R^4)$ is \emph{fiber-orthogonal} if
\begin{equation}\label{eq:fiber-orthogonality}
\int_\R
v\bigl(\Theta_f(Y,\eps s)\bigr)
Z_{\eps,f}\bigl(\Theta_f(Y,\eps s)\bigr)
\mathcal J_f(Y,\eps s)\,\dd s
=0
\qquad\text{for a.e. }Y\in\Gamma.
\end{equation}
\end{definition}

For a fiber-orthogonal $v$, \eqref{eq:Eperp-def} gives
\[
\int_{\R^4}E\,v\,\dd x
=
\int_{\R^4}E^\perp_{\eps,f}\,v\,\dd x.
\]

We modify the potential only for large $|v|$. Fix $\tau_*>0$ small and
choose a smooth function $\widetilde D_W(U,v)$ such that
\begin{equation}\label{eq:trunc-properties}
\left\{
\begin{aligned}
\widetilde D_W(U,v)
&=D_W(U,v),
&& |v|\le\tau_*,\\
|\partial_v\widetilde D_W(U,v)|
&\le C|v|,\\
\widetilde D_W(U,v)
&\ge \frac12W''(U)v^2-C\tau_*v^2.
\end{aligned}
\right.
\end{equation}
When $|U|$ is sufficiently close to $1$, we also require
\begin{equation}\label{eq:trunc-tail-monotonicity}
\partial_v\widetilde D_W(U,v)\,v\ge cv^2.
\end{equation}

Set
\begin{equation}\label{eq:modified-operator}
N_{\eps,\tau_*}(U+v)
:=
-\eps^2\diver a(DU+Dv)
+W'(U)+\partial_v\widetilde D_W(U,v).
\end{equation}
For $|v|\le\tau_*$ this agrees with $N_\eps(U+v)$. In
\cref{sec:sharp-estimates} we prove that the correction satisfies this
bound.

\begin{proposition}\label{prop:relative-coercivity}
For $\tau_*>0$ sufficiently small, there exist $\eps_0>0$ and $c>0$
such that, for $0<\eps<\eps_0$ and every $f$ satisfying
\eqref{eq:f-ball}, each invariant fiber-orthogonal
$v\in H^1(\R^4)$ satisfies
\begin{equation}\label{eq:relative-coercivity}
\int_{\R^4}
\left[
\eps^2 B_H(DU+Dv,DU)
+\widetilde D_W(U,v)
\right]\dd x
\ge
c\int_{\R^4}
\left(v^2+\eps^2|Dv|^2\right)\dd x.
\end{equation}
The same estimate holds on the bounded adapted domains used below.
\end{proposition}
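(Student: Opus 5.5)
Our plan is to split $\R^4$ into the \emph{core} $\{x=\Theta_f(Y,\eps s):|s|\le S_0\}$, for a large fixed $S_0$, and its complement, the \emph{tail}. In the tail we will use the pointwise coercivity of $\widetilde D_W$. In the core we will reduce to the one-dimensional spectral gap of $L_0:=-\partial_s^2+W''(q)$ on each fiber. The two contributions will then be glued with a partition of unity in $s$.

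\emph{Step 1: pointwise lower bounds.} For the gradient term we use Lemma~\ref{lem:radial-Bregman}. In the tube, $DU=\eps^{-1}q_{\eps,Y}'(s)\,n_f$, and this is of the form $\alpha n$ with $F(n)=1$ and $\alpha\ge0$, since $q'_{\eps,Y}\ge0$. Hence
\[
B_H(DU+Dv,DU)\ge\frac{1-\vartheta}{2}\bigl(DF(n_f)\cdot Dv\bigr)^2+\frac{\vartheta\lambda}{2}|Dv|^2 .
\]
By the construction of $\Theta_f$, the derivative $DF(n_f)\cdot Dv$ is exactly $\partial_tv$ in the coordinates $(Y,t)$, so
\[
\eps^2\bigl(DF(n_f)\cdot Dv\bigr)^2=(\partial_sv)^2 .
\]
Outside the tube we have $DU=0$, and we use \eqref{eq:Bregman-global}. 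For the potential we use \eqref{eq:trunc-properties}:
\[
\widetilde D_W\ge\Bigl(\tfrac12W''(U)-C\tau_*\Bigr)v^2 .
\]
In the tail $|U|$ is close to $1$, so $W''(U)\ge 1$ and the potential term already dominates $v^2$. Together with $\tfrac{\lambda}{2}\eps^2|Dv|^2$ this gives the tail bound directly.

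\emph{Step 2: the core via the fiberwise spectral gap.} Take $\vartheta$ small and pull back to the coordinates $(Y,s)$ with Jacobian $\eps\mathcal J_f$. Using $\mathcal J_f(Y,\eps s)=\mathcal J_f(Y,0)(1+O(\eps|s|\rho^{-1}))$ from \eqref{eq:Jacobian-bounds}, the core integrand is bounded below by
\[
\tfrac12\int\Bigl[(1-\vartheta)(\partial_sv)^2+W''(q)v^2\Bigr]\mathcal J_f(Y,0)\,\dd s
\]
up to errors of size $C(\tau_*+\vartheta+\eps S_0)\int v^2$. The fiber-orthogonality \eqref{eq:fiber-orthogonality} is orthogonality to $Z_{\eps,f}$ with weight $\mathcal J_f(Y,\eps s)$. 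Replacing this weight by $\mathcal J_f(Y,0)$ and $Z_{\eps,f}$ by $q'$ costs only $O(\eps)+e^{-cT_\eps}$ in the orthogonality defect. Since $q'$ spans $\ker L_0$, the one-dimensional gap then gives $\ge c_0\int(v^2+(\partial_sv)^2)\dd s$ on each fiber.

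\emph{Step 3: localization (main obstacle).} The spectral gap is a statement on the whole line, whereas the good tail bound only holds for $|s|\ge S_0$. We therefore plan an IMS-type localization. Take $\eta_1^2+\eta_2^2=1$ in $s$, with $\eta_1$ supported in $|s|\le 2S_0$, and with derivatives $O(1/S_0)$. The localization error is then $O(S_0^{-2})\int v^2$.

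The function $\eta_1v$ is no longer exactly orthogonal to $q'$. However, its defect equals $-\int\eta_2 v\,q'$, which is bounded by $e^{-cS_0}\|v\|$ because $q'$ decays exponentially. We plan to project off this small component and use the spectral gap with the resulting $O(e^{-cS_0})$ error.

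For $\eta_2v$ we apply Step 1. Here $W''(q(s))\ge1$ for $|s|\ge S_0$, so the same pointwise argument applies in the part of the tube beyond the core as well.

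Finally, choose $S_0$ large, then $\tau_*$ and $\vartheta$ small, then $\eps_0$ small, so that all errors are absorbed. This yields \eqref{eq:relative-coercivity}, where the tangential part of $\eps^2|Dv|^2$ comes from the $\vartheta\lambda$ term. On bounded adapted domains the same argument applies verbatim to functions with zero trace.
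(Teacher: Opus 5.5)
Your proposal is correct and is essentially the paper's proof. It uses the same ingredients: \eqref{eq:radial-Bregman-combined} with $\alpha=q'(s)/\eps$ and $\partial_sV=\eps\,DF(n_f)\cdot Dv$, the fiberwise spectral gap of $-\partial_s^2+W''(q)$ under the perturbed orthogonality to $q'$, and, where $|U|$ is close to $1$, pointwise coercivity of $\widetilde D_W$ together with \eqref{eq:Bregman-global}. Your IMS partition in $s$ only makes explicit the core--tail gluing that the paper leaves implicit when it applies the whole-line gap on each fiber; note that in the clipping strip $DU$ is not exactly radial, so there you should rely only on \eqref{eq:Bregman-global}, which your cutoffs already allow.
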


\begin{proof}
Write
\[
V(Y,s):=v\bigl(\Theta_f(Y,\eps s)\bigr).
\]
In the unclipped region,
\[
DU=\frac{q'(s)}{\eps}n_f,
\qquad
V_s=\eps\,DF(n_f)\cdot Dv.
\]
Applying \eqref{eq:radial-Bregman-combined} with
$\alpha=q'(s)/\eps$ and $\xi=Dv$ gives
\begin{equation}\label{eq:profile-Bregman-control}
\eps^2 B_H(DU+Dv,DU)
\ge
\frac{1-\vartheta}{2}|V_s|^2
+
\frac{\vartheta\lambda}{2}\eps^2|Dv|^2
\end{equation}
for any fixed $\vartheta\in(0,1)$. Together with
\eqref{eq:trunc-properties},
\[
\eps^2 B_H(DU+Dv,DU)+\widetilde D_W(U,v)
\ge
\frac12\bigl[(1-\vartheta)|V_s|^2+W''(q)V^2\bigr]
+
\frac{\vartheta\lambda}{2}\eps^2|Dv|^2
-C\tau_*V^2.
\]

The fiber constraint is a small perturbation of the orthogonality to
$q'$. Indeed, $Z_{\eps,f}\circ\Theta_f=q'$ except in the remote cutoff,
and on $\supp Z_{\eps,f}$,
\[
\mathcal J_f(Y,\eps s)
=
\mathcal J_f(Y,0)+O(\eps^{\delta_*})
\]
by \eqref{eq:Jacobian-bounds} and \eqref{eq:remote-scale}. Hence the
one-dimensional spectral gap for
$-\partial_s^2+W''(q)$ remains uniform:
\[
\int_\R
\left(|V_s|^2+W''(q)V^2\right)\dd s
\ge
c\int_\R
\left(|V_s|^2+V^2\right)\dd s.
\]
Choosing $\vartheta$ and $\tau_*$ small gives the required control in the
transition region.

In the clipping region and the pure phases, $U$ is close to $\pm1$.
Integrating \eqref{eq:trunc-tail-monotonicity} from $0$ to $v$ gives
$\widetilde D_W(U,v)\ge c v^2$, while
\eqref{eq:Bregman-global} gives
\[
B_H(DU+Dv,DU)\ge\frac{\lambda}{2}|Dv|^2.
\]
Integration over the fibers proves \eqref{eq:relative-coercivity}. The same proof applies on the bounded domains used below, since the support
of $Z_{\eps,f}$ stays away from their boundary along each normal fiber.
\end{proof}

Choose smooth invariant exhaustions
\[
\Gamma_L\Subset\Gamma,
\qquad
\Omega_L\Subset\R^4,
\qquad
\Omega_L\uparrow\R^4,
\]
such that $\Omega_L$ is a tubular product over $\Gamma_L$ wherever
$Z_{\eps,f}\neq0$. In particular, the support of $Z_{\eps,f}$ on each
normal fiber stays away from $\partial\Omega_L$.

Let $\mathcal A_L$ be the set of invariant functions
$v\in H^1_0(\Omega_L)$ satisfying
\eqref{eq:fiber-orthogonality} for a.e. $Y\in\Gamma_L$. For
$v\in\mathcal A_L$, define
\begin{equation}\label{eq:relative-functional}
\mathcal F_{\eps,f,L}(v)
:=
\int_{\Omega_L}
\left[
\eps^2B_H(DU+Dv,DU)
+\widetilde D_W(U,v)
\right]\dd x
+
\langle E^\perp_{\eps,f},v\rangle .
\end{equation}
We use the scaled norm
\begin{equation}\label{eq:H-eps-norm}
\|v\|_{H^1_\eps(\Omega)}^2
:=
\int_\Omega
\left(v^2+\eps^2|Dv|^2\right)\dd x.
\end{equation}

\begin{proposition}\label{prop:fixed-interface-solution}
Let $f$ satisfy \eqref{eq:f-ball}. For $\eps>0$ sufficiently small,
there exists an invariant pair
\[
(v_{\eps,f},\ell_{\eps,f})
\in
\bigl(H^1(\R^4)\cap W^{2,2}_{\loc}(\R^4)\bigr)
\times H^{-1}_{\loc}(\Gamma)
\]
such that $v_{\eps,f}$ is fiber-orthogonal and
\begin{equation}\label{eq:fixed-interface-system}
N_{\eps,\tau_*}(U_{\eps,f}+v_{\eps,f})
=
Z_{\eps,f}\ell_{\eps,f}
\end{equation}
in the weak sense. Moreover,
\begin{equation}\label{eq:fixed-interface-energy}
\|v_{\eps,f}\|_{H^1_\eps(\R^4)}
\le
C\|E^\perp_{\eps,f}\|_{(H^1_\eps(\R^4))^*}.
\end{equation}
\end{proposition}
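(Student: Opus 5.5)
We obtain the pair $(v_{\eps,f},\ell_{\eps,f})$ by the direct method on the exhausting domains $\Omega_L$, then pass to the limit $L\to\infty$.

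\emph{Minimization on $\Omega_L$.} The class $\mathcal A_L$ is a closed linear subspace of $H^1_0(\Omega_L)$: the fiber constraint \eqref{eq:fiber-orthogonality} is linear and continuous, because $Z_{\eps,f}$ is bounded with fiberwise compact support. Convexity of $H$ makes $v\mapsto\int\eps^2B_H(DU+Dv,DU)$ convex and continuous, hence weakly lower semicontinuous. The potential term $\widetilde D_W(U,v)$ has derivative at most $C|v|$ by \eqref{eq:trunc-properties}, so it is continuous under strong $L^2$ convergence, and Rellich on the bounded $\Omega_L$ supplies that convergence. By \cref{prop:relative-coercivity} on $\Omega_L$,
\[
\mathcal F_{\eps,f,L}(v)\ge c\|v\|_{H^1_\eps}^2-\|E^\perp_{\eps,f}\|_{(H^1_\eps)^*}\|v\|_{H^1_\eps}.
\]
So $\mathcal F_{\eps,f,L}$ is coercive and a minimizer $v_L$ exists. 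Since $\mathcal F_{\eps,f,L}(v_L)\le\mathcal F_{\eps,f,L}(0)=0$, we get
\[
\|v_L\|_{H^1_\eps}\le C\|E^\perp_{\eps,f}\|_{(H^1_\eps(\R^4))^*},
\]
with $C$ independent of $L$. Here \eqref{eq:Eperp-leading} shows $E^\perp_{\eps,f}\in L^2$. Invariance can be imposed in the class, or recovered afterwards by symmetrizing the minimizer, which is harmless because the problem is $O(2)\times O(2)$-equivariant.

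\emph{Euler--Lagrange equation and multiplier.} Take the first variation along $\varphi\in\mathcal A_L$; differentiability comes from \eqref{eq:global-Lipschitz-a} and dominated convergence, as in \cref{prop:canonical-second-variation}. This shows that the functional
\[
\varphi\mapsto\eps^2\!\int a(DU+Dv_L)\cdot D\varphi+\int\bigl(W'(U)+\partial_v\widetilde D_W(U,v_L)\bigr)\varphi
\]
vanishes on the fiber-orthogonal subspace. Here I use that $\int E\varphi=\int E^\perp\varphi$ and that $\int W'(U)\varphi-\eps^2\int a(DU)\cdot D\varphi=-\int E\varphi$, so the $E^\perp$ term recombines into the full operator.

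A general test function decomposes as $\varphi=\varphi^\perp+Z_{\eps,f}\,g$, where $g=\Pi_{\eps,f}\varphi$ after normalizing by the denominator in \eqref{eq:projection-def}. This uses \eqref{eq:projection-Zg}, and that the support of $Z$ stays inside $\Omega_L$ along the fibers. The functional therefore depends only on $g$, and defines a distribution $\ell_L$ on $\Gamma_L$ with $N_{\eps,\tau_*}(U+v_L)=Z_{\eps,f}\ell_L$ weakly. The bound $\|\ell_L\|_{H^{-1}(K)}\le C_K$ on compact $K\subset\Gamma$ follows by testing with $Z_{\eps,f}\,g$ for $g\in H^1_0(K)$ and using the $H^1$ bound on $v_L$.

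\emph{Limit and regularity.} I extract a subsequence with $v_L\rightharpoonup v$ in $H^1(\R^4)$ and $\ell_L\rightharpoonup\ell$ in $H^{-1}_\loc$; the fiber constraint passes to the weak limit. The main obstacle is passing to the limit in the nonlinear term $a(DU+Dv_L)$, since weak convergence alone is not enough. To handle it I would derive local strong $H^1$ convergence from the strong monotonicity \eqref{eq:strong-monotonicity}, by the Minty/Browder trick: test the difference of the equations for $v_L$ and $v_{L'}$ with a cutoff times $(v_L-v_{L'})$, corrected by a $Z$-multiple so that it stays admissible. This gives a Cauchy estimate in $H^1_\loc$ modulo lower order terms, which are compact by Rellich. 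The limit then satisfies \eqref{eq:fixed-interface-system}, and \eqref{eq:fixed-interface-energy} follows by weak lower semicontinuity of the norm.

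Finally, $W^{2,2}_\loc$ regularity comes from the difference-quotient method. Because $a$ is globally Lipschitz and strongly monotone, $D_h v$ satisfies a uniformly elliptic divergence-form equation with bounded measurable coefficients $\int_0^1Da(\cdot)\,dt$. The right-hand side $Z_{\eps,f}\ell$ involves $\ell$, which is only known in $H^{-1}_\loc$ along $\Gamma$, so I will first use the structure in $Y$ to upgrade $\ell$: it can be expressed through $v$ and $U$ by projecting the equation, which gives $\ell\in L^2_\loc$. The derivatives of $U$ are smooth, so the standard Nirenberg argument then gives $v\in W^{2,2}_\loc$.
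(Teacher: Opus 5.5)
Your existence argument on $\Omega_L$, the $L$-uniform bound from comparison with $0$, and the construction of $\ell_L$ through the splitting $\varphi=(\varphi-R_{g_\varphi})+R_{g_\varphi}$ are the paper's. For $L\to\infty$ you take a different route.

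\emph{Your route to strong convergence.} You obtain strong $H^1_{\loc}$ convergence from strong monotonicity (Minty/Browder). The paper instead first proves $L$-uniform $W^{2,2}(K)$ bounds for $v_L$ and then applies Rellich. Your route works and avoids second derivatives at that stage, provided the cutoff is constant along the fibers on $\supp Z_{\eps,f}$ (as with $\eta_R$ in \cref{prop:sharp-v-estimate}). Then $\eta(v_L-v_{L'})$ is already fiber-orthogonal and both multipliers drop out. With a generic cutoff, the $Z$-correction is bounded but not small in $H^1$, and its cross term against $a(DU+Dv_L)-a(DU+Dv_{L'})$ need not vanish.

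\emph{Invariance.} It must be imposed in the admissible class; symmetrizing afterwards does not work. \cref{prop:relative-coercivity} is stated only for invariant $v$. Also, $\widetilde D_W(U,\cdot)$ is not convex, so averaging a minimizer over $O(2)\times O(2)$ need not give a minimizer.

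\emph{The gap: the $W^{2,2}_{\loc}$ step.} This regularity is part of the statement, and your proof of it is circular. You propose to upgrade $\ell$ to $L^2_{\loc}$ by projecting the equation. In $\langle\ell,g\rangle=\eps^{-1}\mathscr L(R_g)$, the normal part of the flux can be moved onto $Z_{\eps,f}$ by integrating by parts along each fiber. The tangential part, however, leaves a term $\int_\Gamma\mathbf A\cdot\nabla_\Gamma g$. Here $\mathbf A(Y)$ is a fiber average of $a(DU+Dv)$ against $Z_{\eps,f}\mathcal J_f$, and it is only known to lie in $L^2_{\loc}(\Gamma)$. Moving $\nabla_\Gamma$ onto $\mathbf A$ needs tangential derivatives of $a(DU+Dv)$, i.e. second derivatives of $v$, which is what you are trying to prove. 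The linearized part $D^2H(DU)Dv$ could be handled by fiber-orthogonality. The nonlinear remainder cannot, since at this stage there is no pointwise smallness of $Dv$ relative to $DU$. So you only know $\ell\in H^{-1}_{\loc}$, $Z_{\eps,f}\ell$ is merely an $H^{-1}$ source in $\R^4$, and the Nirenberg argument does not close.

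\emph{The fix: exploit fiber-orthogonality of $v$.} Rather than upgrading $\ell$, use the fiber-orthogonality of $v$ itself; this is the paper's ``corrected difference quotient''.
\begin{itemize}
\item Take $\varphi=D_{-h}(\eta^2D_hv)$, with difference quotients in the tubular coordinates $(Y,s)$ and $\eta$ fiber-constant on $\supp Z_{\eps,f}$.
\item The discrete product rule and $\int_\R vZ_{\eps,f}\mathcal J_f\,\dd s=0$ move the difference quotients onto $Z_{\eps,f}\mathcal J_f$. Hence $g_\varphi$ involves $v$ only through $D_hDv$ and lower-order quantities, and $\|R_{g_\varphi}\|_{H^1}\le C(\|\eta D_hDv\|_{L^2}+1)$.
\item Testing with the fiber-orthogonal function $\varphi-R_{g_\varphi}$ removes the multiplier entirely.
\item The price, $\mathscr L(R_{g_\varphi})=\eps\langle\ell,g_\varphi\rangle$, is only linear in $\|\eta D_hDv\|_{L^2}$. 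Young's inequality absorbs it into the term $\lambda\eps^2\int\eta^2|D_hDv|^2$ supplied by \eqref{eq:strong-monotonicity}.
\end{itemize}
The paper runs this uniformly for $v_L$, which is where its compactness comes from. Since you already have strong convergence, you can run it on the limit instead.
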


\begin{proof}
For each $L$, consider
\[
\inf_{v\in\mathcal A_L}\mathcal F_{\eps,f,L}(v).
\]
By \cref{prop:relative-coercivity} and Young's inequality,
\[
\mathcal F_{\eps,f,L}(v)
\ge
\frac c2\|v\|_{H^1_\eps(\Omega_L)}^2
-
C\|E^\perp_{\eps,f}\|_{(H^1_\eps(\Omega_L))^*}^2.
\]
The set $\mathcal A_L$ is weakly closed in $H^1_0(\Omega_L)$. The
gradient term is weakly lower semicontinuous, while the potential term
passes to the limit by compactness in $L^2(\Omega_L)$. Hence the direct
method gives a minimizer $v_L\in\mathcal A_L$. Comparing with $0$ gives
\begin{equation}\label{eq:finite-energy-bound}
\|v_L\|_{H^1_\eps(\Omega_L)}
\le
C\|E^\perp_{\eps,f}\|_{(H^1_\eps(\Omega_L))^*},
\end{equation}
with $C$ independent of $L$.

We identify the equation satisfied by $v_L$. For a test function
$\varphi$, set
\[
g_\varphi(Y)
:=
\int_\R
\varphi\bigl(\Theta_f(Y,\eps s)\bigr)
Z_{\eps,f}\bigl(\Theta_f(Y,\eps s)\bigr)
\mathcal J_f(Y,\eps s)\,\dd s.
\]
For compactly supported $g$ on $\Gamma_L$, define
\[
R_g\bigl(\Theta_f(Y,\eps s)\bigr)
:=
\frac{
g(Y)Z_{\eps,f}\bigl(\Theta_f(Y,\eps s)\bigr)
}{
\displaystyle
\int_\R
Z_{\eps,f}\bigl(\Theta_f(Y,\eps\sigma)\bigr)^2
\mathcal J_f(Y,\eps\sigma)\,\dd\sigma
},
\]
and extend it by zero. Then $g_{R_g}=g$, and
\[
\varphi
=
\bigl(\varphi-R_{g_\varphi}\bigr)+R_{g_\varphi},
\]
with $\varphi-R_{g_\varphi}$ fiber-orthogonal.

Let
\[
\mathscr L_L(\varphi)
:=
\eps^2\int_{\Omega_L}
a(DU+Dv_L)\cdot D\varphi\,\dd x
+
\int_{\Omega_L}
\bigl[W'(U)+\partial_v\widetilde D_W(U,v_L)\bigr]
\varphi\,\dd x.
\]
The minimizing property gives $\mathscr L_L(\varphi)=0$ for every
fiber-orthogonal $\varphi$. Define $\ell_L$ on $\Gamma_L$ by
\[
\langle\ell_L,g\rangle
:=
\frac1\eps\,\mathscr L_L(R_g).
\]
The decomposition above gives
$\mathscr L_L(\varphi)=\eps\langle\ell_L,g_\varphi\rangle$. We write
$Z_{\eps,f}\ell_L$ for the distribution defined by
\[
\langle Z_{\eps,f}\ell_L,\varphi\rangle
:=
\eps\langle\ell_L,g_\varphi\rangle .
\]
Thus
\begin{equation}\label{eq:finite-projected-equation}
N_{\eps,\tau_*}(U+v_L)
=
Z_{\eps,f}\ell_L
\qquad\text{in }\Omega_L.
\end{equation}

The estimate \eqref{eq:finite-energy-bound} is uniform in $L$. After
extending $v_L$ by zero and passing to a subsequence,
$v_L\rightharpoonup v_{\eps,f}$ weakly in $H^1(\R^4)$. On every
$K\Subset\R^4$, corrected difference quotients preserving the fiber
constraint, together with \eqref{eq:strong-monotonicity} and
\eqref{eq:global-Lipschitz-a}, give
$\|v_L\|_{W^{2,2}(K)}\le C_K$. Rellich compactness then gives
$v_L\to v_{\eps,f}$ strongly in $W^{1,2}_{\loc}(\R^4)$.

The fiber constraint and \eqref{eq:fixed-interface-energy} pass to the
limit. The limiting first variation vanishes on every fiber-orthogonal
test function, and the same decomposition defines
$\ell_{\eps,f}\in H^{-1}_{\loc}(\Gamma)$ with
\eqref{eq:fixed-interface-system}. The local $W^{2,2}$ estimate also
passes to the limit.
\end{proof}

\section{Scale-sharp estimates for the correction and multiplier}
\label{sec:sharp-estimates}

For each $f$ satisfying \eqref{eq:f-ball},
\cref{prop:fixed-interface-solution} gives
\[
(v_{\eps,f},\ell_{\eps,f})
\in
\bigl(H^1(\R^4)\cap W^{2,2}_{\loc}(\R^4)\bigr)
\times H^{-1}_{\loc}(\Gamma),
\]
with
\[
N_{\eps,\tau_*}(U_{\eps,f}+v_{\eps,f})
=
Z_{\eps,f}\ell_{\eps,f}.
\]
To solve the reduced equation, we need sharper information: pointwise decay
of $v_{\eps,f}$ on the natural scale, a weighted H\"older estimate for
$\ell_{\eps,f}$, and quantitative dependence on $f$.

The pointwise estimate is delicate in dimension four, since
$W^{2,2}$ is critical. The $O(2)\times O(2)$ symmetry lowers the effective dimension and gives the compactness needed after rescaling.
Near the transition layer, $|DU_{\eps,f}|$ is large enough compared with
$|Dv_{\eps,f}|$ to linearize $a(DU_{\eps,f}+Dv_{\eps,f})$ around
$DU_{\eps,f}$. Away from the transition layer, $q'$ may be arbitrarily small and the
linearization is no longer uniform. We instead use the strong monotonicity
of $a$ and the fact that $W''>0$ near $\pm1$.

This core--tail argument gives
\[
|v_{\eps,f}|+\eps|Dv_{\eps,f}|
\le C\eps^2\rho^{-2},
\]
together with the corresponding scaled $C^{1,\beta}$ estimate. In
particular, the modification introduced in \cref{sec:fixed-interface}
does not affect the solution for $\eps$ small. Projecting the equation then
gives the weighted estimate for $\ell_{\eps,f}$. We also prove the difference estimates in $f$ needed for the contraction
argument in \cref{sec:reduction}.

\begin{proposition}
\label{prop:sharp-v-estimate}
There exist $\beta\in(0,1)$ and $C<\infty$ such that
\begin{equation}\label{eq:sharp-v-estimate}
|v_{\eps,f}(x)|
+\eps|Dv_{\eps,f}(x)|
\le
C\eps^2\rho(x)^{-2}
\qquad (x\in\R^4).
\end{equation}
For every $x\in\R^4$,
\begin{equation}\label{eq:scaled-C1beta}
\bigl\|
v_{\eps,f}(x+\eps\,\cdot)
\bigr\|_{C^{1,\beta}(B_{1/2})}
\le
C\eps^2\rho(x)^{-2}.
\end{equation}
For $\eps$ sufficiently small,
\begin{equation}\label{eq:truncation-inactive}
\|v_{\eps,f}\|_{L^\infty(\R^4)}
\le C\eps^2<\tau_*,
\end{equation}
and
\[
N_{\eps,\tau_*}(U_{\eps,f}+v_{\eps,f})
=
N_\eps(U_{\eps,f}+v_{\eps,f}).
\]
\end{proposition}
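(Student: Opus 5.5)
We argue by a blow-up (contradiction) scheme on the natural scale $\eps$, combined with the core--tail splitting announced at the start of the section. Set $\kappa:=\sup_{x}\eps^{-2}\rho(x)^2\bigl(|v|+\eps|Dv|\bigr)(x)$ for $v=v_{\eps,f}$. The first task is to show $\kappa<\infty$ for each fixed $\eps$; this follows from $v\in H^1(\R^4)$ and a local $L^\infty$ bound. The sharper claim is that $\kappa\le C$ uniformly. The basic input is the energy bound \eqref{eq:fixed-interface-energy} together with \eqref{eq:Eperp-leading}. Testing against $\varphi\in H^1_\eps$ localized in a ball $B_{\eps}(x)$, or in a dyadic annulus, gives a localized dual norm of $E^\perp$ of size $\eps^2\rho^{-2}$ per unit $\eps$-ball. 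Hence $\|v\|_{H^1_\eps}$ is controlled on rescaled balls, once we have a localized version of the coercivity estimate \cref{prop:relative-coercivity}. To get it, we repeat the minimization with weights $\rho^{2}$, that is, we test the Euler--Lagrange equation against $\rho^{4}\eta^2 v$ after projecting it to be fiber-orthogonal; the commutator terms carry a factor $\eps\rho^{-1}$ and are absorbed.

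Next we pass from integral to pointwise bounds. Rescale around a point $x_0$: $w(z):=\eps^{-2}\rho(x_0)^{2}\,v(x_0+\eps z)$ on $B_2$. Because of $O(2)\times O(2)$ invariance, away from the two axes $w$ is essentially a function of two variables (the radii $|x|,|y|$). On the rescaled ball the $H^1$ bound therefore becomes an $H^1$ bound in effective dimension two, and $W^{2,2}$ embeds into $C^{0,\beta}$ there. Near the axes we use the local $W^{2,2}$ bound together with the fact that $\rho$ is comparable to $1$ there only on a compact set, where the bound is trivial for fixed geometry. For $C^{1,\beta}$ we split into two regions.

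\emph{Core}, $|s|\le S_0$. Here $|DU|\ge c/\eps$. Provided $\eps|Dv|\ll 1$, which holds by a bootstrap starting from the continuity argument, the rescaled equation for $w$ is linear, uniformly elliptic, with smooth coefficients $D^2H(DU)$ plus a small remainder. The right-hand side consists of $\eps^{-2}\rho^2 E^\perp$, which is bounded by \eqref{eq:Eperp-leading}, plus the multiplier term. The multiplier term is itself bounded by projecting the equation: $\ell$ is an average of $E$ and of terms linear in $w$. Schauder theory then gives the scaled $C^{1,\beta}$ bound \eqref{eq:scaled-C1beta}.

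\emph{Tail}, $|s|\ge S_0$. Here $|U|$ is close to $1$, and \eqref{eq:trunc-tail-monotonicity} gives $\partial_v\widetilde D_W\,v\ge cv^2$. The difference $w$ satisfies an equation $-\diver(\mathcal A Dw)+c(z)w=g$, where $\mathcal A=\int_0^1D^2H(DU+tDv)\,dt$ is measurable and uniformly elliptic by \eqref{eq:full-ellipticity}, and $c\ge c_0>0$. De Giorgi--Nash--Moser and comparison with the barrier $e^{-\sqrt{c_0}\,\mathrm{dist}}$ give $L^\infty$ decay. The $C^{1,\beta}$ bound comes from regularity for weak solutions of $\diver a(D u)=$ bounded, as in \cref{prop:local-regularity}, applied to $U+v$; the $C^{1,\beta}$ norm of $U$ is explicit.

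Combining the two regions and arguing by contradiction closes the estimate. Suppose $\kappa_\eps\to\infty$ along a sequence, normalize by $\kappa_\eps$, and extract a limit. By the reduced-dimension compactness, the limit is a bounded solution either of $-\partial_s^2w+W''(q)w=0$ (possibly with tangential Laplacian), orthogonal to $q'$, or of $-\diver(\mathcal A Dw)+2w=0$ on an entire space. In either case the limit vanishes, which contradicts the normalization at the point where $\kappa$ is nearly attained. This yields \eqref{eq:sharp-v-estimate}.

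Finally, \eqref{eq:truncation-inactive} follows because $C\eps^2<\tau_*$ for small $\eps$, and \eqref{eq:trunc-properties} then gives $N_{\eps,\tau_*}=N_\eps$. The main obstacle is the core step: justifying the linearization of $a$ requires $\eps|Dv|\ll|q'|$ uniformly, which is circular. I would handle it by a continuity argument in the constant, bootstrapping from the crude $C^{1,\alpha}$ bound to the sharp one, and by controlling the multiplier $\ell$ in $L^\infty$ along the way.
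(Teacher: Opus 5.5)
There is a genuine gap, and it is at the point you flag yourself. Your contradiction argument normalizes by the unknown quantity $\kappa_\eps$. In the core, the gradient perturbation relative to $DU$ is then of size $\kappa_\eps\eps^2\rho^{-2}/q'(s)$, which is exactly what you are trying to bound. So nothing forces the blow-up limit to solve the linearized equation. Your remedy does not close this:
\begin{itemize}
\item The regularity of \cref{prop:local-regularity}, applied to $U+v$, gives only $\eps|Dv|\lesssim 1$ on the $\eps$-scale, not $\eps|Dv|\ll q'$.
\item You do not name a parameter for the continuity argument, and none is apparent. The correction $v_{\eps,f}$ is produced by minimization, not by continuation from a known small state.
\item For the same reason, finiteness of $\kappa$ for fixed $\eps$ does not follow from $v\in H^1$ plus a local $L^\infty$ bound, since these give no $\rho^{-2}$ decay.
\end{itemize}

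The paper avoids the circularity by ordering the steps differently. It first proves the scale-sharp integral bounds \eqref{eq:dyadic-H1} and \eqref{eq:dyadic-W22} without ever linearizing $a$:
\begin{itemize}
\item The cutoff is chosen constant along normal fibers on $\supp Z_{\eps,f}$. Then $\eta_R^2v$ is fiber-orthogonal and the multiplier drops out.
\item Coercivity comes from \eqref{eq:symmetric-radial} and the one-dimensional spectral gap.
\item The $W^{2,2}$ bound comes from tangential difference quotients corrected by their $Z$-component, together with \eqref{eq:strong-monotonicity} and \eqref{eq:global-Lipschitz-a}. You omit this step, although your embedding step needs a scale-sharp $W^{2,2}$ bound, not just $H^1$.
\end{itemize}
Only then does the paper blow up, normalizing by the known scale $A=\eps^2R^{-2}$. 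It linearizes only on the logarithmic core $|s|\le\frac{m_*}{\sqrt2}|\log\eps|$ with $1<m_*<2$. There $q'\ge c\eps^{m_*}$, so $A/q'\le C\eps^{2-m_*}\to0$. The uniform Lipschitz bound and strong $W^{1,p}$ convergence (from the symmetry reduction) then make the rescaled flux converge to the linearized one. No pointwise smallness of $Dv$ is needed.

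Your tail step has a second gap. De Giorgi--Nash--Moser for the secant equation with measurable $\mathcal A$ gives $L^\infty$ and $C^{0,\beta}$ control of $v$, not $C^{1,\beta}$. The $C^{1,\beta}$ regularity of $U+v$ gives gradient bounds of size $O(1)$ on the $\eps$-scale, not $O(\eps^2\rho^{-2})$. So \eqref{eq:scaled-C1beta} is not obtained in the tail. The paper uses no linearization there:
\begin{itemize}
\item The rescaled fluxes $\xi\mapsto\bigl[a(P_j+\tfrac{A_j}{\eps_j}\xi)-a(P_j)\bigr]/(A_j/\eps_j)$ are uniformly strongly monotone and Lipschitz, even as $P_j\to0$.
\item \eqref{eq:trunc-tail-monotonicity} gives a coercive zeroth-order term, so every tail limit vanishes.
\item The Campanato iteration then yields the scaled $C^{1,\beta}$ bound.
\end{itemize}

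Finally, your treatment of the axes is incorrect. The axes are unbounded and, far out, lie in the pure-phase region. Even on compact sets you need constants uniform in $\eps$, so the estimate there is not ``trivial for fixed geometry.'' The paper instead uses that near an axis an invariant function lifts to at most three Cartesian variables. This still gives $W^{1,p}$ compactness for $p<6$.
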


\begin{proof}
Write $v=v_{\eps,f}$ and, for $R\ge1$, set
\[
A_R:=\{x\in\R^4:R<\rho(x)<2R\},
\qquad
A_R^*:=\{x\in\R^4:R/2<\rho(x)<4R\}.
\]

\medskip
\noindent\textit{Step 1. Local energy estimate.}
Choose $\widehat\eta_R$ on $\Gamma$ with
$\widehat\eta_R=1$ on $\{R<\rho<2R\}$,
$\supp\widehat\eta_R\subset\{R/2<\rho<4R\}$, and
$|D_\Gamma\widehat\eta_R|\le C/R$.
On the tubular region containing $\supp Z_{\eps,f}$, extend it along the
normal fibers by
\[
\eta_R\bigl(\Theta_f(Y,t)\bigr):=\widehat\eta_R(Y),
\]
and choose the extension outside this region so that
$|D\eta_R|\le C/R$.

Since $\eta_R$ is constant along the fibers on $\supp Z_{\eps,f}$,
$\eta_R^2v$ is fiber-orthogonal. Testing the equation with
$\eta_R^2v$, the multiplier term vanishes. The estimate
\eqref{eq:symmetric-radial} and the one-dimensional spectral gap used in
\cref{prop:relative-coercivity} give
\begin{equation}\label{eq:localized-euler-coercivity}
\int_{\R^4}
\eta_R^2\left(v^2+\eps^2|Dv|^2\right)\dd x
\le
C\left|\langle E^\perp_{\eps,f},\eta_R^2v\rangle\right|
+
C\eps^2\int_{\R^4}v^2|D\eta_R|^2\,\dd x.
\end{equation}

On $A_R^*$, \eqref{eq:Eperp-leading} gives the scale
$|E^\perp_{\eps,f}|=O(\eps^2R^{-2})$ in the transition region.
Young's inequality in \eqref{eq:localized-euler-coercivity}, followed by
the usual iteration over neighboring annuli, gives
\begin{equation}\label{eq:dyadic-H1}
\frac1{\eps R^3}
\int_{A_R}
\left(v^2+\eps^2|Dv|^2\right)\dd x
\le
C\eps^4R^{-4}.
\end{equation}
Thus $v$ has the scale $\eps^2R^{-2}$ on $A_R$ at the energy level.

\medskip
\noindent\textit{Step 2. The $W^{2,2}$ estimate.}
We use difference quotients in \eqref{eq:dyadic-H1}. The fiber
constraint has to be preserved in the tangential directions. Write
\[
V(Y,s):=v\bigl(\Theta_f(Y,\eps s)\bigr),
\qquad
Z(Y,s):=Z_{\eps,f}\bigl(\Theta_f(Y,\eps s)\bigr).
\]
Differentiating \eqref{eq:fiber-orthogonality} formally in $Y$ gives
\[
\int_\R D_YV\,Z\,\mathcal J_f\,\dd s
=
-\int_\R V\,D_Y(Z\mathcal J_f)\,\dd s.
\]
Accordingly, in the difference-quotient argument we remove the
$Z$-component and use
\[
D_YV
-
\frac{\displaystyle\int_\R D_YV\,Z\,\mathcal J_f\,\dd s}
     {\displaystyle\int_\R Z^2\mathcal J_f\,\dd s}\,Z
\]
to construct a fiber-orthogonal test function. The factor
$\int_\R Z^2\mathcal J_f\,\dd s$ is uniformly bounded above and below.
The correction is lower order, since derivatives of $Z$ occur only in
the remote cutoff and
$D_Y\mathcal J_f=O(\rho^{-1})$.

Using these corrected tests,
\eqref{eq:strong-monotonicity} controls the principal term and
\eqref{eq:global-Lipschitz-a} the remaining terms. Together with
\eqref{eq:dyadic-H1}, we obtain
\begin{equation}\label{eq:dyadic-W22}
\frac1{\eps R^3}
\int_{A_R}
\left(
v^2+\eps^2|Dv|^2+\eps^4|D^2v|^2
\right)\dd x
\le
C\eps^4R^{-4}.
\end{equation}

\medskip
\noindent\textit{Step 3. Blow-up argument.}
The $W^{2,2}$ estimate is critical in dimension four. The
$O(2)\times O(2)$ symmetry lowers the effective dimension. Away from the
axes, an invariant function depends on two quotient variables, while near
an axis a Cartesian lifting involves at most three variables. Hence
\eqref{eq:dyadic-W22} gives local compactness in $W^{1,p}$ for every
$p<6$, in particular strong $W^{1,4}_{\loc}$ compactness.

Suppose that \eqref{eq:scaled-C1beta} fails. After rescaling at a bad point
on the $\eps_j$-scale and dividing by
\[
A_j:=\eps_j^2R_j^{-2},
\qquad
\rho(Y_j)\simeq R_j,
\]
the estimate \eqref{eq:dyadic-W22} gives a bounded sequence in
$W^{2,2}_{\loc}$. After passing to a subsequence, we obtain a nonzero
normalized limit. We show that every possible limit must vanish.

Fix
\begin{equation}\label{eq:log-core}
S_\eps:=\frac{m_*}{\sqrt2}|\log\eps|,
\qquad
1<m_*<2.
\end{equation}
We call $|s|\le S_\eps$ the logarithmic core and $|s|>S_\eps$ the tail.
Since $S_\eps\ll T_\eps(Y)$, the core lies in the unclipped region.

We also need a local bound for the multiplier in the core. Choose
$\zeta\in C_c^\infty(\R)$ with
$\int_\R\zeta(s)q'(s)\,\dd s\neq0$ and support in a fixed part of the
core. Testing the normalized equation with $\psi(z)\zeta(s)$ and using
\eqref{eq:dyadic-W22} gives a local bound for the normalized multiplier.
After passing to a subsequence, it converges locally to a function
$\lambda(z)$.

In the core,
\[
q'(s)\ge c\eps_j^{m_*},
\qquad
\frac{A_j}{q'(s)}
\le
C\eps_j^{2-m_*}R_j^{-2}\longrightarrow0.
\]
Thus the perturbation of the gradient is small compared with
$DU_{\eps_j,f_j}$, and $a$ can be linearized on compact subsets of the
core.

If the normal coordinates of the bad points remain bounded, the limiting
equation has the form
\[
\mathcal L_0w+\mathcal L_zw=q'(s)\lambda(z),
\qquad
\mathcal L_0=-\partial_s^2+W''(q),
\]
where $\mathcal L_z$ is a constant-coefficient elliptic operator. The fiber
constraint passes to the limit:
\[
\int_\R w(z,s)q'(s)\,\dd s=0.
\]
Since $\mathcal L_0q'=0$, projection onto $q'$ gives $\lambda=0$.
The spectral gap of $\mathcal L_0$ on $(q')^\perp$, together with the
ellipticity in the tangential variables, gives $w=0$.

The other possibility in the core is
$|s_j|\to\infty$ with $|s_j|\le S_{\eps_j}$. With
$\sigma=s-s_j$,
\[
q(s_j+\sigma)\longrightarrow\pm1,
\qquad
q'(s_j+\sigma)\longrightarrow0
\]
locally in $\sigma$. The estimate above still allows us to linearize $a$.
The multiplier term disappears, while the zeroth-order coefficient tends
to $W''(\pm1)>0$. Testing the limiting equation with $w$ gives $w=0$.

It remains to consider the tail. Here $q'$ may be arbitrarily small, so
the previous linearization is no longer uniform. We use instead the global
estimates \eqref{eq:strong-monotonicity} and
\eqref{eq:global-Lipschitz-a}. The rescaled nonlinearities have the form
\[
\xi\longmapsto
\frac{
a\!\left(P_j+\frac{A_j}{\eps_j}\xi\right)-a(P_j)
}{
A_j/\eps_j
},
\]
where $P_j$ denotes the rescaled gradient of $U_{\eps_j,f_j}$. These maps
are uniformly strongly monotone and Lipschitz, even when $P_j\to0$.

In the tail, $U_{\eps_j,f_j}$ is uniformly close to $\pm1$ in the
unclipped and clipping regions, and is equal to $\pm1$ outside the tube.
The estimate \eqref{eq:trunc-tail-monotonicity} therefore gives a positive
zeroth-order term. Since $Z_{\eps_j,f_j}\sim q'$ tends to zero along the
tail sequence, the multiplier disappears in the limit. Every tail limit
satisfies
\[
-\diver A_\infty(Dw)+\mathcal R_\infty(w)=0,
\qquad
\mathcal R_\infty(w)w\ge c|w|^2,
\]
with $A_\infty$ strongly monotone. Testing with a cutoff times $w$ and
letting the cutoff radius tend to infinity gives
\[
\int |Dw|^2+\int |w|^2=0.
\]
Hence $w=0$.

All possible blow-up limits are therefore zero, contradicting the
normalization.

\medskip
\noindent\textit{Step 4. Pointwise estimate.}
The blow-up argument above shows that every normalized limit is zero.
The usual Campanato iteration gives, for some $\beta\in(0,1)$,
\[
\bigl\|
v(x+\eps\,\cdot)
\bigr\|_{C^{1,\beta}(B_{1/2})}
\le
C\eps^2\rho(x)^{-2}.
\]
This is \eqref{eq:scaled-C1beta}, and evaluating at the center gives
\eqref{eq:sharp-v-estimate}.
\end{proof}

We now estimate the multiplier. By
\eqref{eq:truncation-inactive},
\[
N_\eps(U_{\eps,f}+v_{\eps,f})
=
Z_{\eps,f}\ell_{\eps,f}.
\]
Applying $\Pi_{\eps,f}$ and using
\eqref{eq:projection-Zg} gives
\[
\ell_{\eps,f}
=
\Pi_{\eps,f}N_\eps(U_{\eps,f}+v_{\eps,f}).
\]
We split this as
\[
\ell_{\eps,f}
=
\Pi_{\eps,f}N_\eps(U_{\eps,f})
+
\Pi_{\eps,f}
\left[
N_\eps(U_{\eps,f}+v_{\eps,f})
-
N_\eps(U_{\eps,f})
\right].
\]
The first term was computed in
\cref{prop:projected-background}. The second term is the contribution of
the correction to the reduced equation. The next proposition shows that
it is of order $\eps^3$.

Decrease $\alpha$ if necessary so that $\alpha<\beta$.

\begin{proposition}
\label{prop:multiplier-estimate}
The multiplier satisfies
\begin{equation}\label{eq:multiplier-estimate}
\ell_{\eps,f}\in C^{0,\alpha}_{\gamma+2}(\Gamma),
\qquad
\|\ell_{\eps,f}\|_{C^{0,\alpha}_{\gamma+2}}
\le
C\left(
\eps\|f\|_{C^{2,\alpha}_\gamma}
+\eps^3
\right).
\end{equation}
Define
\begin{equation}\label{eq:Delta-def}
\Delta_\eps(f)
:=
\Pi_{\eps,f}
\left[
N_\eps(U_{\eps,f}+v_{\eps,f})
-
N_\eps(U_{\eps,f})
\right].
\end{equation}
Then
\begin{equation}\label{eq:Delta-estimate}
\|\Delta_\eps(f)\|_{C^{0,\alpha}_{\gamma+2}}
\le C\eps^3,
\end{equation}
and
\begin{equation}\label{eq:multiplier-expansion}
\ell_{\eps,f}
=
-\eps\left[J_\Gamma f+\mathcal Q_\Gamma(f)\right]
+\eps^3G_{\eps,f}
+\Delta_\eps(f).
\end{equation}
\end{proposition}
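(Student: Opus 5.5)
The identity \eqref{eq:multiplier-expansion} is immediate. By \eqref{eq:truncation-inactive} the truncation is inactive, so $N_\eps(U_{\eps,f}+v_{\eps,f})=Z_{\eps,f}\ell_{\eps,f}$. Applying $\Pi_{\eps,f}$ and using \eqref{eq:projection-Zg} gives the splitting written before the statement. We then insert \eqref{eq:projected-background} for the first term. Once \eqref{eq:Delta-estimate} is known, \eqref{eq:multiplier-estimate} follows from this expansion together with:
\begin{itemize}[label={}]
\item the boundedness $J_\Gamma:C^{2,\alpha}_\gamma\to C^{0,\alpha}_{\gamma+2}$, which is the forward direction of \cref{thm:Jacobi-inverse};
\item the bound $\|\mathcal Q_\Gamma(f)\|\le C\|f\|^2\le C\|f\|$ from \eqref{eq:QGamma-Lipschitz};
\item \eqref{eq:G-bound}.
\end{itemize}
So the whole content of the proposition is \eqref{eq:Delta-estimate}.

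For $\Delta_\eps(f)$, I would work in the core $|s|\le S_\eps$, where $q'(s)\ge c\eps^{m_*}$. There \eqref{eq:sharp-v-estimate} gives $|Dv|\le C\eps\rho^{-2}\ll|DU|=q'/\eps$, so we may expand
\[
N_\eps(U+v)-N_\eps(U)=-\eps^2\diver\bigl(D^2H(DU)Dv\bigr)+W''(U)v+\mathcal N(v),
\]
with $\mathcal N$ quadratic. In the tail $|s|>S_\eps$, the factor $Z_{\eps,f}\sim q'\le C\eps^{m_*}$ with $m_*>1$, combined with $|v|+\eps|Dv|\le C\eps^2\rho^{-2}$ and the global Lipschitz bound \eqref{eq:global-Lipschitz-a} used after integration by parts, gives a contribution $O(\eps^{2+m_*}\rho^{-2})$. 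This is $o(\eps^3)$ only in the weak sense of $\rho^{-2}$ decay. I therefore expect to need the weight $\rho^{-\gamma-2}$ from a finer decay statement; see the last paragraph.

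In the core, I would integrate the linear term against $Z\mathcal J_f$ along fibers. Write $D^2H(n_f)$ in the coordinates $(Y,s)$. Since $D^2H(n_f)n_f=DF(n_f)$ and the normal direction of $\Theta_f$ is $DF(n_f)$, the purely normal part of $-\eps^2\diver(D^2H(DU)Dv)$ is $-\partial_s^2V$ plus terms carrying a factor $\eps\rho^{-1}$ from curvature and from $\partial_t\mathcal J_f$. Hence
\[
\int\bigl(-\partial_s^2V+W''(q)V\bigr)q'\,\dd s=0
\]
by self-adjointness and $\mathcal L_0q'=0$, up to cutoff errors. The mixed and tangential second-order terms are $\eps^2 D_Y^2 V$ and $\eps\rho^{-1}\partial_sV$. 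Their projections are controlled as follows: tangential derivatives are commuted out of the fiber integral, and orthogonality kills the zeroth-order piece, leaving commutators with $\mathcal J_f$ of order $\rho^{-1}$. Using the scaled $C^{1,\beta}$ bound \eqref{eq:scaled-C1beta} and the $W^{2,2}$ control, each remaining piece is at most $C\eps\cdot\eps\rho^{-1}\cdot\eps^2\rho^{-2}/\eps=O(\eps^3\rho^{-3})$. The quadratic term contributes $O(|Dv|^2\eps/q')$ and $O(v^2)$. Since $|Dv|^2\le C\eps^2\rho^{-4}$ and $\eps^{1-m_*}$ is at worst $\eps^{-1}$, this is $O(\eps^{3}\rho^{-4})$ after weighting by $q'$.

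The main obstacle is twofold. First, there is the weighted Hölder seminorm in $Y$. Second, the decay $\rho^{-2}$ from \eqref{eq:sharp-v-estimate} must be upgraded to $\rho^{-\gamma-2}$ with $\gamma>0$. For decay, the terms above actually carry $\rho^{-3}$, since every surviving contribution has an extra curvature factor $\rho^{-1}$ or is quadratic, and $\rho^{-3}\le\rho^{-\gamma-2}$ because $\gamma<1$. In the tail, I would sharpen the bound by running the core--tail argument for $v$ with the exponential weight $e^{-c|s|}$. Since $E^\perp$ has the factor $(1+|s|)q'$, this gives $|v|\le C\eps^2\rho^{-2}e^{-c|s|}$ and makes the tail $O(\eps^N)$. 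For Hölder regularity in $Y$, I would use difference quotients of the fiber integrals in $Y$ on the $\eps$-scale together with \eqref{eq:scaled-C1beta} with $\alpha<\beta$. On each dyadic annulus rescaled to unit size, the $C^{0,\alpha}$ seminorm is dominated by $\eps^{-\alpha}$ times the $\eps$-scale Hölder quotient. That loss would be absorbed by the slack $\eps^{m_*-1}$ or by the extra $\rho^{-1}$, and checking this bookkeeping is the delicate step.
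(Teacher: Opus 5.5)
Your reduction of \eqref{eq:multiplier-expansion} and \eqref{eq:multiplier-estimate} to \eqref{eq:Delta-estimate} is correct and is what the paper does. Your treatment of the linear terms in the core also matches the paper's: the cancellation comes from $\mathcal L_0q'=0$ and fiber orthogonality, and the extra $\rho^{-1}$ comes from derivatives of $\mathcal J_f$.

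The gap is the split of the fiber at $|s|=S_\eps$. In the tail you bound $N_\eps(U+v)-N_\eps(U)$ \emph{linearly} in $(v,\eps Dv)$ and get $O(\eps^{2+m_*}\rho^{-2})$. Multiplied by the weight $\rho^{\gamma+2}$, this grows like $\rho^{\gamma}$. So it does not control the $C^{0,\alpha}_{\gamma+2}$ norm at all, let alone by $C\eps^3$. If you integrate $-\partial_s^2V$ by parts against $q'$ only over the core, the boundary terms at $\pm S_\eps$ are of the same type. Since $\eps^2\rho^{-2}$ is the natural size of $v$, extra decay in $\rho$ can only come from the full-line cancellation in the linear part or from quadratic structure. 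Cutting the fiber discards both.

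Your proposed repair does not help. An exponential weight $e^{-c|s|}$ with fixed $c$ only gives an additional fixed power $e^{-cS_\eps}=\eps^{cm_*/\sqrt2}$, not $O(\eps^N)$. It also gives no decay in $\rho$, which is unbounded for fixed $\eps$.

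The missing idea is that $\Delta_\eps(f)$ needs no core/tail split at all; the logarithmic core is only needed in the blow-up argument for $v$. The support of $Z_{\eps,f}$ lies in the unclipped region, where $P=\eps^{-1}q'(s)n_f\neq0$. So you can linearize around $P$ along the entire fiber. The linear part $-\eps^2\diver(D^2H(n_f)Dv)+W''(q)v$ is then treated on the whole line exactly as in your core computation. The flux remainder satisfies the uniform weighted bound
\[
q'(s)\,\bigl|a(P+Q)-a(P)-Da(P)Q\bigr|\le C\eps|Q|^2,
\qquad Q=Dv .
\]
You already have the case $|Q|\le|P|/2$, by Taylor with $|D^2a(P)|\le C|P|^{-1}$. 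If $|Q|>|P|/2$, then $q'(s)<2\eps|Q|$, while \eqref{eq:global-Lipschitz-a} bounds the remainder by $2\Lambda|Q|$. The product is again at most $C\eps|Q|^2$.

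Hence the nonlinear contribution is quadratic everywhere on the fiber and is $O(\eps^3\rho^{-4})$ by \eqref{eq:sharp-v-estimate}. Together with the linear terms this gives $|\Delta_\eps(f)|\le C\eps^3\rho^{-3}$, which fits the weight because $\gamma+2<3$.

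Separately, your plan to absorb the $\eps^{-\alpha}$ loss in the H\"older seminorm through the slack $\eps^{m_*-1}$ cannot work in the core. There the terms are of size exactly $\eps^3\rho^{-3}$, with no spare power of $\eps$; the extra $\rho^{-1}$ only absorbs the factor $R^\alpha$. The paper simply appeals to \eqref{eq:scaled-C1beta} at this point. What is really needed there is tangential control of $v$ on scale $\rho$ rather than $\eps$.
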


\begin{proof}
It is enough to prove \eqref{eq:Delta-estimate}. Write
\[
V(Y,s)
:=
v_{\eps,f}\bigl(\Theta_f(Y,\eps s)\bigr),
\qquad
Z:=Z_{\eps,f}\circ\Theta_f,
\qquad
\mathcal J:=\mathcal J_f(Y,\eps s).
\]

The leading linear term in the normal direction is
\[
L_0=-\partial_s^2+W''(q).
\]
Since $L_0q'=0$ and $Z=q'$ away from the remote cutoff, its projection
vanishes after integration by parts. The remaining normal terms contain
derivatives of $\mathcal J$ or of the cutoff. Using
\[
\partial_s\mathcal J=O(\eps\rho^{-1}),
\qquad
\partial_s^2\mathcal J=O(\eps^2\rho^{-2}),
\]
and \eqref{eq:sharp-v-estimate}, their contribution is
$O(\eps^3\rho^{-3})$.

The tangential linear terms have the same order. Differentiating the
fiber-orthogonality condition transfers tangential derivatives of $V$
onto $Z\mathcal J$. Derivatives of $Z$ occur only in the remote cutoff,
while derivatives of $\mathcal J$ give a factor $\rho^{-1}$. Together
with \eqref{eq:sharp-v-estimate}, this gives
$O(\eps^3\rho^{-3})$ after projection.

For the nonlinear part, set
\[
P=\frac{q'(s)}{\eps}n_f,
\qquad
Q=Dv_{\eps,f}.
\]
We claim that
\begin{equation}\label{eq:weighted-flux-remainder}
q'(s)
\left|
a(P+Q)-a(P)-Da(P)Q
\right|
\le
C\eps|Q|^2.
\end{equation}
If $|Q|\le |P|/2$, Taylor's formula and
$|D^2a(P)|\le C|P|^{-1}$ give the estimate. If
$|Q|>|P|/2$, the global Lipschitz bound for $a$ gives
\[
\left|
a(P+Q)-a(P)-Da(P)Q
\right|
\le C|Q|,
\]
while $q'(s)\le C\eps|Q|$. This proves
\eqref{eq:weighted-flux-remainder}.

The nonlinear terms from $a$ and $W'$ are therefore bounded after
projection by
\[
C\left(
\eps|Dv_{\eps,f}|^2
+
\eps^{-1}|v_{\eps,f}|^2
\right)
\le
C\eps^3\rho^{-4}
\]
by \eqref{eq:sharp-v-estimate}. Hence
\[
|\Delta_\eps(f)|
\le
C\eps^3\rho^{-3}.
\]
The scaled $C^{1,\beta}$ estimate
\eqref{eq:scaled-C1beta} gives the corresponding
$C^{0,\alpha}$ bound. Since $\gamma+2<3-\mu<3$ by
\eqref{eq:gamma-window}, we obtain
\eqref{eq:Delta-estimate}.

Equation \eqref{eq:multiplier-expansion} follows from
\cref{prop:projected-background}. Using
\eqref{eq:QGamma-Lipschitz}, \eqref{eq:G-bound},
\eqref{eq:Delta-estimate}, and \eqref{eq:f-ball} gives
\eqref{eq:multiplier-estimate}.
\end{proof}

For the contraction argument we also need quantitative dependence on $f$.

\begin{proposition}
\label{prop:fixed-interface-difference}
There exist $\theta>0$ and $C<\infty$ such that, for any
$f_1,f_2$ satisfying \eqref{eq:f-ball},
\begin{align}
\|\ell_{\eps,f_1}-\ell_{\eps,f_2}\|_{C^{0,\alpha}_{\gamma+2}}
&\le
C\eps
\|f_1-f_2\|_{C^{2,\alpha}_\gamma},
\label{eq:ell-difference}\\
\|\Delta_\eps(f_1)-\Delta_\eps(f_2)\|_{C^{0,\alpha}_{\gamma+2}}
&\le
C\eps^{1+\theta}
\|f_1-f_2\|_{C^{2,\alpha}_\gamma}.
\label{eq:Delta-difference}
\end{align}
After transporting the two corrections to the same $(Y,s)$ coordinates,
\begin{equation}\label{eq:v-difference}
|v_{\eps,f_1}-v_{\eps,f_2}|
+\eps|D(v_{\eps,f_1}-v_{\eps,f_2})|
\le
C\eps^\theta
\|f_1-f_2\|_{C^{2,\alpha}_\gamma}\rho^{-\gamma-1}.
\end{equation}
For each $f$ satisfying \eqref{eq:f-ball}, the small invariant
fiber-orthogonal fixed-interface solution is unique.
\end{proposition}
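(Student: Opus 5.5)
We reduce the difference estimates to the fixed-interface theory of \cref{prop:fixed-interface-solution} and \cref{prop:sharp-v-estimate}, applied to the equation satisfied by the difference of two corrections written in common coordinates. Set $h:=f_1-f_2$ and $\delta:=\|h\|_{C^{2,\alpha}_\gamma}$. We pull both problems back to the $(Y,s)$ coordinates of $\Theta_{f_2}$. Concretely, we compose with the diffeomorphism $\Phi:=\Theta_{f_1}\circ\Theta_{f_2}^{-1}$, which is $\delta\rho^{1-\gamma}$-close to the identity in scaled $C^{2,\alpha}$ on the tube. Then $U_{\eps,f_1}\circ\Phi=U_{\eps,f_2}$ exactly in the unclipped region, and $Z_{\eps,f_1}\circ\Phi=Z_{\eps,f_2}$ up to super-algebraically small cutoff terms. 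The pulled-back $\tilde v_1:=v_{\eps,f_1}\circ\Phi$ therefore solves an equation of the form $N^{\Phi}_\eps(U_{\eps,f_2}+\tilde v_1)=Z_{\eps,f_2}\tilde\ell_1$. Here $N^\Phi_\eps$ is the divergence operator with the coefficients $a$ replaced by the pullback $a^\Phi(x,p)=(\det D\Phi)\,D\Phi^{-1}a(D\Phi^{-T}p)$. The orthogonality condition changes only by the Jacobian factor, which we absorb by projecting $\tilde v_1$ onto the $f_2$-fiber-orthogonal class, at a cost of $O(\delta)$ times lower-order terms.

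Subtracting the two equations, $w:=\tilde v_1-v_{\eps,f_2}$ satisfies
\[
-\eps^2\diver\bigl[\mathcal A(x)Dw\bigr]+c(x)w=Z_{\eps,f_2}(\tilde\ell_1-\ell_{\eps,f_2})+\mathcal E_h .
\]
In this equation, $\mathcal A=\int_0^1 Da(DU+D v_{\eps,f_2}+tDw)\,\dd t$ is the averaged coefficient, which is uniformly elliptic by \eqref{eq:strong-monotonicity} and \eqref{eq:global-Lipschitz-a} even where $DU$ vanishes, and $c$ is the averaged $W''$. The source $\mathcal E_h$ collects the terms created by $\Phi$ and by the change of the residual. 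By \eqref{eq:background-residual-expansion} it is of size $\eps^2 q'(1+|s|)\,\delta\rho^{-\gamma-2}$, plus $\eps^{-1}|a^\Phi-a|$ acting on $D\tilde v_1$; the latter is $\delta\rho^{-\gamma-1}\cdot\eps^{2}\rho^{-2}/\eps$ by \eqref{eq:sharp-v-estimate}. The crucial point is that the leading part of the $h$-dependence, namely $-\eps q'J_\Gamma h$, lies entirely in the $q'$ direction and is absorbed by the multiplier, so only the fiber-orthogonal part of $\mathcal E_h$ enters $w$. Testing with fiber-orthogonal cutoffs $\eta_R^2 w$ as in Step 1 of \cref{prop:sharp-v-estimate} kills the multiplier. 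The coercivity of the linear operator on the fiber-orthogonal class then follows from the spectral-gap argument of \cref{prop:relative-coercivity}, linearized using \eqref{eq:symmetric-radial}. Running the dyadic energy, $W^{2,2}$, blow-up and Campanato steps of \cref{prop:sharp-v-estimate} verbatim then yields \eqref{eq:v-difference}. The gain $\eps^\theta$ comes from the fact that every surviving source term carries at least a factor $\eps^{1-}$ relative to $\delta\rho^{-\gamma-1}$.

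For \eqref{eq:Delta-difference}, we write $\Delta_\eps(f_1)-\Delta_\eps(f_2)$ in the common coordinates and repeat the proof of \cref{prop:multiplier-estimate} on differences. The normal linear term again vanishes after projection because $L_0q'=0$. The remaining terms involve derivatives of $\mathcal J$, the difference of the coordinate maps (of size $\delta$) times $\eps^3\rho^{-3}$, and $w$ times factors $\eps\rho^{-1}$. The nonlinear remainder is controlled by the difference version of \eqref{eq:weighted-flux-remainder}, namely $q'\,|\cdots|\le C\eps(|Q_1|+|Q_2|)|Q_1-Q_2|$, which together with \eqref{eq:v-difference} gives $\eps^{1+\theta}\delta\rho^{-\gamma-2}$. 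Combining this with \eqref{eq:multiplier-expansion}, \eqref{eq:QGamma-Lipschitz} and \eqref{eq:G-difference} gives \eqref{eq:ell-difference}, whose leading contribution is $\eps\|J_\Gamma h\|\le C\eps\delta$. For uniqueness, we apply the same difference argument with $f_1=f_2$ to two small solutions. Here $\mathcal E_h=0$, and the coercivity on the fiber-orthogonal class forces $w=0$.

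The main obstacle is the pointwise step for $w$, which amounts to redoing the blow-up of Step 3 for a linear equation whose coefficient $\mathcal A$ is only bounded measurable in the tail. In the core, $\mathcal A$ converges to $D^2H(n_f)$ because $|Dv_i|\ll q'/\eps$ there, so the limiting problem and the spectral-gap argument are unchanged. In the tail, where $q'$ is not bounded below, I would use only uniform ellipticity and $c\ge c_0>0$ from \eqref{eq:trunc-tail-monotonicity}, and prove the $C^{1,\beta}$ bound by De Giorgi--Nash together with the symmetry-reduced compactness. If the $C^{1,\beta}$ bound in the tail fails, I would settle for $C^{0,\beta}$ and recover the gradient bound from the core only, since the projection onto $Z_{\eps,f}$ sees only the core.
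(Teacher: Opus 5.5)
You follow the paper's proof step for step. You freeze $(Y,s)$ to identify the two tubes, remove the $Z$-component to restore fiber-orthogonality, and subtract the equations. You linearize $a$ in the layer and use \eqref{eq:strong-monotonicity}, \eqref{eq:global-Lipschitz-a} and $W''>0$ away from it. You then rerun the machinery of \cref{prop:sharp-v-estimate}, project for $\Delta_\eps$, read off $\ell$ from \eqref{eq:multiplier-expansion}, and use coercivity for uniqueness. You are more explicit than the paper in two useful places. First, $U_{\eps,f}$ and $Z_{\eps,f}$ do not depend on $f$ in the $(Y,s)$ coordinates (exactly, on the whole tube), so $-\eps q'J_\Gamma h$ is absorbed entirely by the multiplier and the fiber-orthogonal sources carry an extra factor $\eps$. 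Second, your difference version of \eqref{eq:weighted-flux-remainder} is correct by the same two-case argument.

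The step that is not closed is the one you flag yourself: control of $Dw$ in the tail $|s|>S_\eps$ that is linear in $\delta=\|f_1-f_2\|_{C^{2,\alpha}_\gamma}$. De Giorgi--Nash, with or without the symmetry reduction, gives only $C^{0,\beta}$ for a divergence-form equation with merely measurable coefficient, never $C^{1,\beta}$. Here $\mathcal A$ is effectively measurable: it has no uniform modulus of continuity where $D(U+v_1)$ and $D(U+v_2)$ are both small, since $D^2H$ jumps at $0$. For the same reason the difference-quotient $W^{2,2}$ step does not carry over verbatim to $w$. Your fallback assumes the projection sees only the core, and this is false. By \eqref{eq:Z-def} and \eqref{eq:remote-scale}, $Z_{\eps,f}=q'$ on $|s|\le T_\eps/2$ with $T_\eps\sim\eps^{\delta_*-1}\rho\gg S_\eps$. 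So $\Pi_{\eps,f}$ integrates over the tail with weight $q'\lesssim\eps^{m_*}$, which is only polynomially small. A Lipschitz estimate needs those tail contributions (the fluxes $a(P+Q_1)-a(P+Q_2)$ and their tangential divergence) to be linear in $\delta$, i.e.\ it needs $Dw$ there. The paper's proof only asserts that the blow-up argument gives \eqref{eq:v-difference}, so it has the same soft spot. Still, as written, your argument does not control the tail part of $\Delta_\eps(f_1)-\Delta_\eps(f_2)$.

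One way to make your fallback correct uses the fact that $N_\eps(U_{\eps,f_i}+v_{\eps,f_i})=Z_{\eps,f_i}\ell_{\eps,f_i}$ holds fiberwise. The multiplier can then be recovered by pairing with any fiber weight that has nonzero pairing with $Z$. A natural choice is the core-localized weight $\zeta(s/S_\eps)q'(s)$, with $\zeta\in C^\infty_c((-1,1))$ even and $\zeta=1$ near $0$; the paper does this with a fixed $\zeta$ in Step~3 of \cref{prop:sharp-v-estimate}.

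\begin{itemize}
\item The full projection in the definition of $\Delta_\eps(f)$ then differs from the localized one only by projections of the explicit residual $N_\eps(U_{\eps,f})$ over $|s|\gtrsim S_\eps$. The $f$-dependence of these terms is harmless.
\item In the localized projection, $L_0q'=0$ leaves commutator terms that involve $w$ but none of its derivatives. They sit at $|s|\sim S_\eps$, where $q'$ is polynomially small, and are bounded by the energy estimate plus De Giorgi--Nash--Moser.
\item Every term involving $Dw$ or $D^2w$ lives in the core. There $|Dv_i|\ll q'/\eps$, the coefficients are smooth, and scaled Schauder estimates are linear in $\delta$.
\end{itemize}

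This yields \eqref{eq:Delta-difference} and \eqref{eq:ell-difference}. It gives the gradient part of \eqref{eq:v-difference} only in the core, which is weaker than stated but appears to suffice for the later uses.
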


\begin{proof}
Identify the two tubular neighborhoods by keeping $(Y,s)$ fixed.
This transport does not preserve the fiber constraint, so we subtract
the component in the $Z$ direction as in Step~2 of
\cref{prop:sharp-v-estimate}. The difference of the two corrections is
then fiber-orthogonal.

Subtract the two fixed-interface equations. In the transition region,
the smallness obtained in \cref{prop:sharp-v-estimate} allows us to
linearize $a$ around the profile. Away from the transition region, this
linearization is not uniform, and we use instead
\eqref{eq:strong-monotonicity} and \eqref{eq:global-Lipschitz-a},
together with the positivity of the potential near $\pm1$.
The local energy and blow-up argument of
\cref{prop:sharp-v-estimate} then gives \eqref{eq:v-difference}.

Projecting the difference onto $Z_{\eps,f}$, the leading normal linear
term cancels as in \cref{prop:multiplier-estimate}. The change of the
tubular coordinates is controlled by
$\|f_1-f_2\|_{C^{2,\alpha}_\gamma}$, while
\eqref{eq:v-difference} gives the factor $\eps^\theta$. This yields
\eqref{eq:Delta-difference}. The expansion
\eqref{eq:multiplier-expansion}, together with
\eqref{eq:QGamma-Lipschitz}, gives \eqref{eq:ell-difference}.

If two small solutions correspond to the same $f$, their difference is
fiber-orthogonal. Testing the difference equation with this difference
eliminates the multiplier term, and \cref{prop:relative-coercivity} gives
$v_1=v_2$. The equation then gives $\ell_1=\ell_2$.
\end{proof}

\section{Jacobi equation: the reduced problem}
\label{sec:reduction}

For each $f$ satisfying \eqref{eq:f-ball},
\cref{prop:fixed-interface-solution} gives a correction $v_{\eps,f}$ such that
\[
N_\eps(U_{\eps,f}+v_{\eps,f})
=
Z_{\eps,f}\ell_{\eps,f}.
\]
By \cref{prop:multiplier-estimate},
\[
\ell_{\eps,f}
=
-\eps\bigl[J_\Gamma f+\mathcal Q_\Gamma(f)\bigr]
+\eps^3G_{\eps,f}
+\Delta_\eps(f).
\]
Hence $\ell_{\eps,f}=0$ is equivalent to
\[
J_\Gamma f
=
-\mathcal Q_\Gamma(f)
+\eps^2G_{\eps,f}
+\eps^{-1}\Delta_\eps(f).
\]

This is the reduced equation for the interface. The weighted inverse
\cref{thm:Jacobi-inverse}, together with
\eqref{eq:QGamma-Lipschitz}, \eqref{eq:G-difference}, and
\eqref{eq:Delta-difference}, gives a contraction on the ball
\eqref{eq:f-ball}.

\begin{theorem}
\label{thm:main}
Let $F$ be the Mooney--Yang anisotropy and $\Gamma$ the minimizing leaf
fixed in \cref{sec:geometry}. Fix $\gamma\in(\mu,1-\mu)$.
There exist $\eps_0>0$, $\alpha\in(0,1)$, and $C<\infty$ such that,
for every $0<\eps<\eps_0$, \eqref{eq:main-PDE} has an
$O(2)\times O(2)$-invariant weak solution
\[
u_\eps\in
C^{1,\alpha}_{\loc}(\R^4)\cap W^{2,2}_{\loc}(\R^4),
\qquad
-1<u_\eps<1,
\]
which is smooth on $\{Du_\eps\neq0\}$.

There exists
\[
f_\eps\in C^{2,\alpha}_\gamma(\Gamma),
\qquad
\|f_\eps\|_{C^{2,\alpha}_\gamma}\le C\eps^2,
\]
such that $\{u_\eps=0\}$ is an anisotropic-normal graph over
$\Gamma_{f_\eps}$. For every $K\Subset\Gamma$ and every
$\alpha'<\alpha$, the zero set converges to $\Gamma$ in
$C^{2,\alpha'}(K)$ as $\eps\to0$.
\end{theorem}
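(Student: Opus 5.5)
The plan is to solve the reduced equation $\ell_{\eps,f}=0$ by a contraction on the ball \eqref{eq:f-ball}, and then read off the properties of $u_\eps:=U_{\eps,f_\eps}+v_{\eps,f_\eps}$. By \cref{prop:multiplier-estimate}, $\ell_{\eps,f}=0$ is equivalent to the fixed-point problem
\[
f=\mathcal T_\eps(f):=J_\Gamma^{-1}\Bigl[-\mathcal Q_\Gamma(f)+\eps^2G_{\eps,f}+\eps^{-1}\Delta_\eps(f)\Bigr],
\]
where $J_\Gamma^{-1}:C^{0,\alpha}_{\gamma+2}\to C^{2,\alpha}_\gamma$ is the bounded inverse from \cref{thm:Jacobi-inverse} with $k=0$, on axis-smooth invariant functions. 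All quantities involved are $O(2)\times O(2)$-invariant, because $U_{\eps,f}$, $Z_{\eps,f}$ and the unique fixed-interface solution of \cref{prop:fixed-interface-difference} are. Hence the right-hand side is invariant and lies in the domain of $J_\Gamma^{-1}$.

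\emph{Self-map.} Take $\|f\|_{C^{2,\alpha}_\gamma}\le M\eps^2$. Then \eqref{eq:QGamma-Lipschitz} with $f_2=0$ gives $\|\mathcal Q_\Gamma(f)\|\le CM^2\eps^4$. Next, \eqref{eq:G-bound} gives $\|\eps^2G_{\eps,f}\|\le C\eps^2$, and \eqref{eq:Delta-estimate} gives $\|\eps^{-1}\Delta_\eps(f)\|\le C\eps^2$. The constants in the last two bounds are independent of $M$ once $\eps$ is small; if they are not, I would track them and use that they depend on $M$ only through terms carrying extra powers of $\eps$. Consequently $\|\mathcal T_\eps(f)\|\le C_0(2\eps^2+M^2\eps^4)$, and choosing $M=4C_0$ and then $\eps_0$ small gives $\mathcal T_\eps(f)\le M\eps^2$ in norm.

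\emph{Contraction.} By \eqref{eq:QGamma-Lipschitz}, \eqref{eq:G-difference} and \eqref{eq:Delta-difference},
\[
\|\mathcal T_\eps(f_1)-\mathcal T_\eps(f_2)\|\le C\bigl(M\eps^2+\eps^2+\eps^{\theta}\bigr)\|f_1-f_2\|,
\]
which is at most $\tfrac12\|f_1-f_2\|$ for small $\eps$. Banach's fixed point theorem then gives a unique $f_\eps$ with $\|f_\eps\|\le C\eps^2$ and $\ell_{\eps,f_\eps}=0$.

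\emph{Properties of $u_\eps$.} By \eqref{eq:truncation-inactive}, $N_{\eps,\tau_*}=N_\eps$ at $U+v$, so $u_\eps$ is a weak solution of \eqref{eq:main-PDE} in the sense of \cref{def:weak-solution}.
\begin{enumerate}[label=\textup{(\roman*)}]
\item \emph{Regularity.} $u_\eps\in W^{2,2}_{\loc}$ by \cref{prop:fixed-interface-solution}. It is $C^{1,\alpha}$ and smooth on $\{Du_\eps\neq0\}$ by \cref{prop:local-regularity}, after shrinking $\alpha$ if necessary.
\item \emph{Bounds $-1<u_\eps<1$.} First $|u_\eps|\le1$: test the equation with $(u_\eps-1)_+$, which is legitimate on large balls using $|v|\le C\eps^2$ and the decay of $v$. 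Strong monotonicity \eqref{eq:strong-monotonicity}, the identity $a(Du)\cdot D(u-1)_+=(a(Du)-a(0))\cdot D(u-1)_+$, and $W'(u)>0$ for $u>1$ force $(u_\eps-1)_+=0$. For strictness, $w=1-u_\eps\ge0$ satisfies a uniformly elliptic linear equation with bounded coefficients, obtained by writing $a(Du)-a(0)=\int_0^1 D^2H(tDu)\,dt\,Du$ (valid since $\{tDu=0\}$ is negligible in $t$), plus a bounded zeroth-order term. The strong maximum principle (Harnack) then gives $w>0$, because $u_\eps\not\equiv1$.
\item \emph{Zero set.} In the core, $u_\eps=q(r_{f_\eps}/\eps)+O(\eps^2\rho^{-2})$ and $\partial_s u_\eps\ge c>0$ for $|s|\le1$, using \eqref{eq:scaled-C1beta}. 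The implicit function theorem along each fiber $s\mapsto\Theta_{f_\eps}(Y,\eps s)$ then writes $\{u_\eps=0\}$ as $\{t=\eps h(Y)\}$ with $|h|\le C\eps^2$. Outside the core, $|u_\eps|$ is bounded away from $0$.
\end{enumerate}

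\emph{Convergence on compacts.} On $K\Subset\Gamma$, $f_\eps\to0$ in $C^{2,\alpha}$. The needed regularity of $h$ is the main obstacle: the fiberwise estimate \eqref{eq:scaled-C1beta} only gives $C^{1,\beta}$ control of $v$ at scale $\eps$, which is too weak for $C^{2,\alpha'}$ control of $\eps h$ at unit scale. I would first try to obtain higher tangential regularity from smoothness of $u_\eps$ near the zero set, where $|Du_\eps|\sim\eps^{-1}$ and the equation is a smooth uniformly elliptic quasilinear equation. Rescaled Schauder estimates at scale $\eps$ give $|D^k v|\le C_k\eps^{2-k}$, so $\eps h$ is $O(\eps^3)$ in $C^0$, and each tangential derivative costs at most $\eps^{-1}$. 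This yields $\|\eps h\|_{C^{2,\alpha'}(K)}\le C\eps^{3-2-\alpha'}\to0$. Combining this with $f_\eps\to0$ and interpolation gives convergence of the zero set to $\Gamma$ in $C^{2,\alpha'}(K)$. Finally, $u_\eps$ is not one-dimensional, since a one-dimensional solution would have a hyperplane zero set, while the limit $\Gamma$ is nonflat.
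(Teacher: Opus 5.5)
Your proposal is correct and follows the paper's proof essentially step for step. You use the same fixed-point map $\mathcal T_\eps$ with the same self-map and contraction bounds on the ball \eqref{eq:f-ball}. You test with $(u_\eps-1)_+$ and apply the strong maximum principle to get $-1<u_\eps<1$, use the implicit-function theorem along the anisotropic normal fibers for the nodal set, and use scaled Schauder estimates where $|Du_\eps|\ge c_K\eps^{-1}$ for the $C^{2,\alpha'}(K)$ convergence.

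The one point the paper makes explicit and you skip concerns the weak equation. The correction is obtained by minimizing over invariant functions, so $N_\eps(u_\eps)=0$ is first known only against $O(2)\times O(2)$-invariant test functions. The paper then averages an arbitrary $\varphi\in C_c^1(\R^4)$ over the group, using invariance of $u_\eps$ and equivariance of $a$, to obtain the full weak equation.
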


\begin{proof}
For $f$ satisfying \eqref{eq:f-ball}, define
\begin{equation}\label{eq:reduced-map}
\mathcal T_\eps(f)
:=
J_\Gamma^{-1}
\left[
-\mathcal Q_\Gamma(f)
+\eps^2G_{\eps,f}
+\eps^{-1}\Delta_\eps(f)
\right].
\end{equation}
On the ball
\[
\left\{
f\in C^{2,\alpha}_\gamma:
\|f\|_{C^{2,\alpha}_\gamma}\le M\eps^2
\right\},
\]
\eqref{eq:QGamma-Lipschitz},
\eqref{eq:Jacobi-inverse-bound}, \eqref{eq:G-bound}, and
\eqref{eq:Delta-estimate} give
\[
\|\mathcal T_\eps(f)\|_{C^{2,\alpha}_\gamma}
\le
C\left(
\|f\|_{C^{2,\alpha}_\gamma}^2+\eps^2
\right)
\le
C\left(M^2\eps^4+\eps^2\right).
\]
For $f_1,f_2$ in the same ball,
\eqref{eq:QGamma-Lipschitz}, \eqref{eq:G-difference}, and
\eqref{eq:Delta-difference} give
\[
\|\mathcal T_\eps(f_1)-\mathcal T_\eps(f_2)\|_{C^{2,\alpha}_\gamma}
\le
C\left(
M\eps^2+\eps^2+\eps^\theta
\right)
\|f_1-f_2\|_{C^{2,\alpha}_\gamma}.
\]
For $M$ large and $\eps$ small, $\mathcal T_\eps$ maps the ball into
itself and is a contraction. Let $f_\eps$ be its fixed point. Then
\[
\|f_\eps\|_{C^{2,\alpha}_\gamma}\le C\eps^2,
\qquad
\ell_{\eps,f_\eps}=0.
\]

Set
\begin{equation}\label{eq:exact-solution}
u_\eps:=U_{\eps,f_\eps}+v_{\eps,f_\eps}.
\end{equation}
By \eqref{eq:truncation-inactive} and $\ell_{\eps,f_\eps}=0$,
\[
N_\eps(u_\eps)=0
\qquad\text{weakly in }\R^4.
\]
The construction gives this identity first for invariant test functions.
Averaging an arbitrary compactly supported test function over
$O(2)\times O(2)$ gives the full weak equation.

Testing with $(u_\eps-1)_+$ and $(-1-u_\eps)_+$ gives
$-1\le u_\eps\le1$. Since $u_\eps$ is nonconstant, the strong maximum
principle gives $-1<u_\eps<1$. By
\cref{prop:fixed-interface-solution,prop:local-regularity},
\begin{equation}\label{eq:exact-regularity}
u_\eps
\in
C^{1,\alpha}_{\loc}(\R^4)\cap W^{2,2}_{\loc}(\R^4),
\qquad
u_\eps\in C^\infty(\{Du_\eps\neq0\}).
\end{equation}

By \cref{prop:sharp-v-estimate}, on every fixed stretched normal band,
\begin{equation}\label{eq:central-profile}
u_\eps\bigl(\Theta_{f_\eps}(Y,\eps s)\bigr)
=
q(s)
+
O_{C^{1,\alpha}}
\bigl(\eps^2\rho(Y)^{-2}\bigr).
\end{equation}
Since $q'(0)>0$, the implicit-function theorem gives
\begin{equation}\label{eq:nodal-graph}
\{u_\eps=0\}
=
\left\{
\Theta_{f_\eps}(Y,h_\eps(Y)):Y\in\Gamma
\right\},
\qquad
|h_\eps(Y)|
\le
C\eps^3\rho(Y)^{-2}.
\end{equation}

Fix $K\Subset\Gamma$. Equation \eqref{eq:central-profile} gives
$|Du_\eps|\ge c_K\eps^{-1}$ near $K$, so the equation is smooth and
uniformly elliptic there. Scaled Schauder estimates and the
implicit-function theorem give
$h_\eps\to0$ in $C^{2,\alpha'}(K)$ for every $\alpha'<\alpha$.
Since $\|f_\eps\|_{C^{2,\alpha}_\gamma}\le C\eps^2$, the zero set
converges to $\Gamma$ in $C^{2,\alpha'}(K)$.
\end{proof}

\section{Ordered families}
\label{sec:stability}

The solutions constructed in \cref{thm:main} inherit the ordering of the
Mooney--Yang dilation foliation. This is the main point of the section.

Indeed, the leaves $e^{-t}\Gamma$ are strictly ordered, and their motion
under dilation is measured by the positive Jacobi field $f_0$, which has
size $\rho^{-\mu}$. The interface perturbations constructed in
\cref{sec:reduction} decay like $\rho^{-\gamma}$ with $\gamma>\mu$.
The same ordering therefore persists for the exact diffuse interfaces near
the transition layer. Away from the transition layer, we extend the
comparison using \eqref{eq:strong-monotonicity} and the maximum principle
for the equation satisfied by the difference of two solutions.

This gives a strictly ordered family of solutions of one fixed equation.
It will be used in two ways. Positive difference quotients along the family
give a positive solution of the Jacobi equation and hence stability. The
ordered graphs also give a Hilbert calibration and show that $u_\eps$ is a
strict $L^\infty$-local minimizer. In \cref{sec:monotone}, the same family
provides the barriers used to construct the five-dimensional monotone
solution.

\begin{lemma}\label{lem:difference-equation}
Let $u_1,u_2$ be bounded weak solutions of \eqref{eq:main-PDE} in
an open set $\Omega$, and set $w:=u_2-u_1$. Then
\[
-\eps^2\diver(A_{12}Dw)+c_{12}w=0
\qquad\text{in }\Omega,
\]
where $A_{12}$ is measurable and symmetric, $\lambda I\le A_{12}\le\Lambda I,$ and $c_{12}$ is bounded.
\end{lemma}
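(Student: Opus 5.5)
The plan is the standard mean-value linearization of the divergence-form operator and of $W'$. Write $p_1=Du_1$, $p_2=Du_2$ and $\gamma(t)=p_1+t(p_2-p_1)$. Subtracting the two weak formulations \eqref{eq:weak-formulation} gives, for every $\varphi\in C^1_c(\Omega)$,
\[
\eps^2\int_\Omega [a(Du_2)-a(Du_1)]\cdot D\varphi\,\dd x+\int_\Omega[W'(u_2)-W'(u_1)]\varphi\,\dd x=0 .
\]
For the zeroth-order term I would set $c_{12}:=\int_0^1W''(u_1+t w)\,dt$. Then $W'(u_2)-W'(u_1)=c_{12}w$, and since $u_1,u_2$ are bounded and $W''(u)=3u^2-1$, $c_{12}$ is bounded.

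For the principal part, the natural definition is
\[
A_{12}(x):=\int_0^1D^2H(\gamma(t))\,dt
\]
at points where $p_1(x)\neq p_2(x)$. At such points the segment $\gamma$ passes through $0$ for at most one $t$, so $D^2H(\gamma(t))$ is defined for a.e.\ $t$. Exactly as in the proof of \cref{prop:global-structure}, the fundamental theorem of calculus along the segment holds, because $a$ is $C^1$ off the origin and Lipschitz by \eqref{eq:global-Lipschitz-a}. This gives $a(p_2)-a(p_1)=A_{12}(p_2-p_1)=A_{12}Dw$.

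Each $D^2H(\gamma(t))$ is symmetric and satisfies \eqref{eq:full-ellipticity}, so the average does too: $\lambda I\le A_{12}\le\Lambda I$. On the set where $p_1=p_2$ we have $Dw=0$, so the identity holds for any choice of $A_{12}$ there; I would take $A_{12}:=D^2H(p_1)$ if $p_1\neq0$ and $A_{12}:=\lambda I$ otherwise. The ellipticity bounds then hold everywhere.

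The only genuinely delicate point is measurability of $A_{12}$. I would write it as $\int_0^1 G(p_1(x),p_2(x),t)\,dt$, where $G(p,q,t)=D^2H(p+t(q-p))$ on the set where $p+t(q-p)\neq0$ and $G=\lambda I$ otherwise. This $G$ is Borel on $\R^4\times\R^4\times[0,1]$ and bounded. Tonelli/Fubini then gives a Borel function of $(p,q)$, and composing with the measurable map $x\mapsto(Du_1,Du_2)$ yields measurability. Together with the choice on $\{p_1=p_2\}$ (also a Borel condition), this gives the weak equation $-\eps^2\diver(A_{12}Dw)+c_{12}w=0$ in $\Omega$, with $A_{12}$ measurable and symmetric. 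No regularity beyond $H^1_{\loc}$ and boundedness is needed.
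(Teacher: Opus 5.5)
Your proposal is correct and follows the paper's proof: you use the same mean-value definitions $A_{12}=\int_0^1 D^2H\bigl(Du_1+t(Du_2-Du_1)\bigr)\dd t$ and $c_{12}=\int_0^1 W''(u_1+tw)\dd t$, and then subtract the two weak formulations. Your explicit treatment of the set $\{Du_1=Du_2\neq0\}$ and your Borel/Fubini measurability check are correct details that the paper leaves implicit.
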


\begin{proof}
For almost every $x$ with $Du_1\neq Du_2$, set
\[
A_{12}(x)
:=
\int_0^1
D^2H\bigl(Du_1+s(Du_2-Du_1)\bigr)\,\dd s.
\]
If the segment passes through $0$, its value at that single point does not
affect the integral. On $\{Du_1=Du_2=0\}$, choose $A_{12}$ arbitrarily
with $\lambda I\le A_{12}\le\Lambda I$. Then $a(Du_2)-a(Du_1)=A_{12}(Du_2-Du_1).$ Also,
\[
W'(u_2)-W'(u_1)=c_{12}(u_2-u_1),
\qquad
c_{12}:=
\int_0^1W''\bigl(u_1+s(u_2-u_1)\bigr)\,\dd s.
\]
Subtracting the two equations gives the result.
\end{proof}

Fix $0<\eps<\eps_0$. For $|t|<t_0$, set
\begin{equation}\label{eq:ordered-family}
\delta_t:=\eps e^t,
\qquad
u_t(x):=u_{\delta_t}(e^t x),
\end{equation}
where $u_\delta$ is the solution given by \cref{thm:main}.
Since $a$ is one-homogeneous, every $u_t$ solves
\begin{equation}\label{eq:ordered-family-equation}
-\eps^2\diver a(Du_t)+W'(u_t)=0
\qquad\text{in }\R^4,
\end{equation}
and $u_0=u_\eps$.

\begin{theorem}
\label{thm:ordered-family}
There exist $\eps_1>0$ and $t_0>0$ such that, for every
$0<\eps<\eps_1$ and $-t_0<t_1<t_2<t_0,$
we have
\[
u_{t_1}<u_{t_2}
\qquad\text{in }\R^4.
\]
\end{theorem}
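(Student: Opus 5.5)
The plan is to show that $\partial_t u_t>0$ at every $t\in(-t_0,t_0)$ and then integrate in $t$. Since $u_t(x)=u_{\eps e^t}(e^tx)$, the zero set of $u_t$ is $e^{-t}$ times the zero set of $u_{\delta_t}$. By \cref{thm:main}, that zero set is the anisotropic-normal graph of $h_{\delta_t}$ over $\Gamma_{f_{\delta_t}}$. Dilating $\Gamma$ moves it with normal speed comparable to the positive Jacobi field $f_0\simeq c_+(1+\mu)\rho^{-\mu}$ from \eqref{eq:dilation-asymptotic}. The corrections move much less: $\|f_\delta\|_{C^{2,\alpha}_\gamma}\le C\delta^2$ with $\gamma>\mu$, and $|h_\delta|\le C\delta^3\rho^{-2}$.

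If $t\mapsto f_{\eps e^t}$ is differentiable, its derivative is $O(\eps^2\rho^{-\gamma})$. I would get this from the contraction in \cref{thm:main}, applied to the dependence of $\mathcal T_\eps$ on $\eps$. Alternatively, I would avoid differentiability and compare the interfaces directly for $t_1<t_2$. Either way, for $\eps$ small the normal displacement of the interfaces is at least $c\,(t_2-t_1)\rho^{-\mu}\cdot\rho$ (in physical units) minus $C\eps^2(t_2-t_1)\rho^{1-\gamma}$. This is strictly positive uniformly out to infinity, because $\gamma>\mu$.

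\textbf{Step 1 (core).} Fix $t_1<t_2$ and work in the anisotropic tube around the leaf, in stretched coordinates $s$. By \eqref{eq:central-profile} and \eqref{eq:scaled-C1beta},
\[
u_{t_i}\bigl(\Theta(Y,\eps s)\bigr)=q(s-\eps^{-1}d_i(Y))+O(\eps^2\rho^{-2}),
\]
with $C^1$-controlled error. Here $d_2-d_1\gtrsim (t_2-t_1)\rho^{1-\mu}$. On the region $|s|\le S$ for a large fixed $S$, the profile difference is therefore at least $c\,q'(S)\eps^{-1}(t_2-t_1)\rho^{1-\mu}$. This dominates the $O(\eps^2)$ errors, or rather their difference, which is controlled by a difference estimate like \eqref{eq:v-difference} in the $t$-parameter. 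So $w:=u_{t_2}-u_{t_1}>0$ on a neighborhood $\mathcal N$ of both interfaces. It is in fact positive on the whole stretched band $|s|\le S_\eps$, where $q'\ge c\eps^{m_*}$.

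\textbf{Step 2 (tail).} On $\Omega_\pm:=\R^4\setminus\mathcal N$, both $u_{t_i}$ lie near $\pm1$, say $|u_{t_i}\mp1|\le\tau$. I would use \cref{lem:difference-equation}: $w$ solves $-\eps^2\diver(A_{12}Dw)+c_{12}w=0$ with $c_{12}\ge W''(\pm1)-C\tau\ge1$. Consider the negative part $w_-$. It vanishes on $\partial\Omega_\pm$ and is bounded. Test with $w_-\,\eta_R^2$ for a cutoff $\eta_R$ and let $R\to\infty$. This gives
\[
\int \eta_R^2\,\bigl(\eps^2\lambda|Dw_-|^2+w_-^2\bigr)\le C\eps^2R^{-2}\int_{B_{2R}}w_-^2.
\]
Since $w_-$ is bounded, the right side is $O(\eps^2R^2)$, so this does not immediately close. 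I would therefore use an exponentially decaying weight, or note that $w_-\in L^2$: both $u_{t_i}\mp1$ decay exponentially in the distance to the interface, and $v\in H^1$. Either gives $w_-\equiv0$. In fact it is better to handle the far field separately. There both $u_{t_i}$ equal their pure-phase-plus-correction forms, so the sign of $w$ comes from the interface geometry, and the tube regions are covered by Step 1.

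\textbf{Step 3 (strictness).} Once $w\ge0$ with $w\not\equiv0$, I would apply the strong maximum principle / Harnack inequality to the linear equation of \cref{lem:difference-equation}, which has bounded measurable coefficients. This gives $w>0$ everywhere.

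\textbf{Main obstacle.} The main obstacle I expect is Step 1. It needs uniform control, as $\rho\to\infty$, of the $t$-dependence of $f_{\eps e^t}$ and of $v_{\eps e^t,f}$ at the order $\eps^2\rho^{-\gamma}\ll\rho^{-\mu}$. This means proving Lipschitz dependence of the fixed point of $\mathcal T_\eps$ on $\eps$, with the weighted bound $\|f_{\delta_2}-f_{\delta_1}\|_{C^{2,\alpha}_\gamma}\le C\eps^2|t_2-t_1|$. It also requires an analogue of \eqref{eq:v-difference} in the scale parameter. My first attempt would be to differentiate $G_{\eps,f}$ and $\Delta_\eps(f)$ in $\eps$, reusing the blow-up argument of \cref{prop:sharp-v-estimate}.
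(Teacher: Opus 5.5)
Your outline is the paper's argument. The dilation of the Mooney--Yang leaf, measured by the positive Jacobi field $f_0\simeq\rho^{-\mu}$, dominates the $O(\tau\eps^2\rho^{-\gamma})$ difference of the rescaled interface perturbations because $\gamma>\mu$. The paper asserts this scale-Lipschitz bound, \eqref{eq:phase-interface-dependence}, together with the matching bound for the corrections, by appealing to uniformity of the fixed-interface estimates and of the contraction in the phase scale; this is exactly the obstacle you single out. Then $q'>0$ gives $w>0$ in an $O(\eps)$ layer. On $\{w<0\}$ both solutions are near the same phase, so $c_{12}\ge c_*>0$ and testing with $w_-$ gives $w\ge0$. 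Finally, Harnack gives $w>0$. As written, however, three points need repair.

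\textbf{The displacement rate.} The leaf moves by $\simeq\tau\rho^{-\mu}$ in physical units, not $\tau\rho^{1-\mu}$. Indeed $-\partial_t\bigl(e^{-t}\sigma(e^tr)\bigr)=e^{-t}f_0(e^tr)$, or equivalently $X\cdot\nu_\Gamma\simeq\rho^{-\mu}$. So the two layers converge at infinity. Your conclusion survives only because you scaled the error term by the same factor $\rho$. The quantity the correction difference must beat in the layer is $\eps^{-1}\tau\rho^{-\mu}$.

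\textbf{Large gaps $t_2-t_1\gg\eps$.} Here the layers are about $\tau/\eps$ widths apart on bounded sets. The bound $w\ge c\,q'(S)\eps^{-1}\tau(\cdots)$ is then false, since $|w|\le2$. Moreover, between the layers $u_{t_2}\approx1$ and $u_{t_1}\approx-1$, so $c_{12}\approx0$ and the premise of your Step~2 fails. That region has to be put into $\mathcal N$, where $w\approx2$. The paper sidesteps this by first taking $0<t_2-t_1\le c_0\eps$. Then both layers lie in one $O(\eps)$ band, where the mean value theorem gives $q(\widetilde r_{t_2}/\eps)-q(\widetilde r_{t_1}/\eps)\ge c(\tau/\eps)\rho^{-\mu}$. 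It then chains finitely many such steps. This also removes the need for differentiability of $t\mapsto u_t$ that your first plan ($\partial_tu_t>0$) would require.

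\textbf{The alternatives at the end of Step~2.} Discard them. Exponential decay away from interfaces of infinite area does not put $w_-$ in $L^2$. Outside the tubes, $w$ is a difference of rescaled corrections, with no sign coming from the geometry. What does close the argument is the weighted test function $w_-\eta^2$ with $\eta=e^{-\kappa|x|/\eps}$ and $\kappa$ small. Then $\eps^2|D\eta|^2\le\kappa^2\eta^2$ is absorbed by $c_{12}\ge c_*$. This makes precise the paper's ``cutoffs on the unbounded components''.
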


\begin{proof}
Set $\tau:=t_2-t_1$. We first assume
$0<\tau\le c_0\eps$, where $c_0>0$ is fixed and small.

The estimates in \cref{sec:sharp-estimates} and the contraction argument
in \cref{sec:reduction} are uniform for comparable phase scales.
Comparing the two fixed points after rescaling to the same coordinates
gives
\begin{equation}\label{eq:phase-interface-dependence}
e^{-t_2}f_{\delta_{t_2}}(e^{t_2}r)
-
e^{-t_1}f_{\delta_{t_1}}(e^{t_1}r)
=
O\!\left(\tau\eps^2\rho(r)^{-\gamma}\right),
\end{equation}
together with the corresponding estimate for the corrections.

Set
\[
\Sigma_t:=e^{-t}\Gamma_{f_{\delta_t}},
\qquad
\widetilde r_t(x)
:=
e^{-t}r_{f_{\delta_t}}(e^t x).
\]
The anisotropic normal coordinates commute with dilations. Hence
$\widetilde r_t$ is the anisotropic signed distance to $\Sigma_t$, and
\begin{equation}\label{eq:scaled-profile-compatibility}
\frac{r_{f_{\delta_t}}(e^t x)}{\delta_t}
=
\frac{\widetilde r_t(x)}{\eps}.
\end{equation}
In the unclipped transition region,
\[
U_{\delta_t,f_{\delta_t}}(e^t x)
=
q\!\left(\frac{\widetilde r_t(x)}{\eps}\right).
\]

The vertical height of $\Sigma_t$ is $e^{-t}\bigl[\sigma(e^tr)+f_{\delta_t}(e^tr)\bigr].$

For the Mooney--Yang leaf, $-\partial_t\!\left(e^{-t}\sigma(e^tr)\right)
=
e^{-t}f_0(e^tr),$ and $f_0\simeq\rho^{-\mu}$. 

Since $\gamma>\mu$,
\eqref{eq:phase-interface-dependence} is smaller than the motion of
the Mooney--Yang leaf. For $\eps$ small, $\widetilde r_{t_2}-\widetilde r_{t_1}
\ge
c\tau\rho^{-\mu}$
in a fixed neighborhood of the transition layer of width $O(\eps)$. Using
\eqref{eq:scaled-profile-compatibility} and $q'>0$ gives
\[
q\!\left(\frac{\widetilde r_{t_2}}{\eps}\right)
-
q\!\left(\frac{\widetilde r_{t_1}}{\eps}\right)
\ge
c\frac{\tau}{\eps}\rho^{-\mu}.
\]
The difference of the corrections is of lower order, so $u_{t_2}>u_{t_1}$ in this neighborhood.

Set $w:=u_{t_2}-u_{t_1}$. Any point where $w<0$ lies outside the
transition region, where the two solutions are close to the same value
$\pm1$. In the notation of \cref{lem:difference-equation}, we then have
$c_{12}\ge c_*>0$. Testing the difference equation with $-w_-$, using
cutoffs on the unbounded components, gives
\[
0
\ge
\eps^2\lambda\int_{\{w<0\}}|Dw|^2
+
c_*\int_{\{w<0\}}w^2.
\]
Thus $w\ge0$ in $\R^4$. 
Since $w>0$ near the transition layer, $w$ is not identically zero, \cref{lem:difference-equation} and the local
Harnack inequality give
\[
w>0
\qquad\text{in }\R^4.
\]

This proves the result when $t_2-t_1\le c_0\eps$. For general
$t_1<t_2$, subdivide $[t_1,t_2]$ into finitely many intervals of this
size and apply the comparison successively.
\end{proof}

To use the second variation from \cref{prop:canonical-second-variation},
we need to know that the critical set of $u_\eps$ has measure zero.

\begin{proposition}\label{prop:null-critical-set}
Let $u_\eps$ be the solution constructed in \cref{thm:main}. For
$\eps>0$ sufficiently small,
\[
\mathcal L^4(\{Du_\eps=0\})=0.
\]
The symmetry also gives
\begin{equation}\label{eq:origin-critical}
Du_\eps(0)=0.
\end{equation}
\end{proposition}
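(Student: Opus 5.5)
The critical set splits naturally into two regions, and I would treat them separately. In the transition region the profile gradient is large. Away from it, $u_\eps$ is close to $\pm1$, and there I would use the ordered family of \cref{thm:ordered-family}.

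\emph{The symmetry claim.} For \eqref{eq:origin-critical}, invariance of $u_\eps$ under $O(2)\times O(2)$ gives $u_\eps(Rx)=u_\eps(x)$. Since $u_\eps\in C^1$, this yields $Du_\eps(0)=R^{T}Du_\eps(0)$ for every $R$ in the group. The only vector fixed by all of $O(2)\times O(2)$ is $0$, so $Du_\eps(0)=0$.

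\emph{The transition region.} By \eqref{eq:central-profile}, on stretched normal bands $|s|\le S$ we have $|Du_\eps|\ge c_S\eps^{-1}q'(s)\rho^0-C\eps\rho^{-2}>0$. I would push this to the logarithmic core $|s|\le S_\eps$ using \eqref{eq:sharp-v-estimate}. There $\eps^{-1}q'(s)\ge c\eps^{m_*-1}$, while $|Dv|\le C\eps\rho^{-2}$, and $m_*<2$. Hence the gradient has no zeros in the core.

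\emph{The tail and pure phases.} Outside the core I would use the dilation family and set
\[
w:=\partial_t u_t|_{t=0}=x\cdot Du_\eps+\eps\,\partial_\delta u_\delta|_{\delta=\eps}.
\]
A cleaner route is to avoid $\partial_\delta$ and argue by analyticity, which I would try first. On the open set $\{Du_\eps\neq0\}$ the equation is analytic, because $F\in C^\omega$ off $0$. So it suffices to show that the interior of $\{Du_\eps=0\}$ is empty and that $\partial\{Du_\eps=0\}$ is null.

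\emph{Interior is empty.} If $Du_\eps=0$ on an open ball $B$, then $u_\eps\equiv c$ there. The weak equation then forces $W'(c)=0$, so $c\in\{0,\pm1\}$. But $-1<u_\eps<1$ excludes $\pm1$. The value $0$ occurs only on the nodal graph \eqref{eq:nodal-graph}, which has empty interior. Hence there is no such ball.

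\emph{Boundary is null.} This is the main obstacle. Here I would use the $O(2)\times O(2)$ reduction: $u_\eps$ is a function of $(|x|,|y|)$ solving a two-variable quasilinear equation in divergence form. For such equations I would try to show that the critical set of a nonconstant solution is locally a finite union of points and curves in the quotient. This works on the open set where the equation is analytic, and it extends across the degenerate set by a Harnack-type argument. I would apply Harnack to the difference quotients $(u_{t}-u_0)/t>0$ from \cref{thm:ordered-family}, via \cref{lem:difference-equation}. In the limit these give $w\ge0$, with $w>0$ by the strong maximum principle, since $w$ solves the linearized equation weakly, as in \cref{lem:difference-equation}.

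At a critical point $x$ outside the core, $w(x)=\eps\,\partial_\delta u_\delta(x)$. The tail bound \eqref{eq:sharp-v-estimate}, applied to the $\delta$-dependence through the difference estimates, shows that this quantity is small compared with its size near the layer. I would then combine this with positivity to show that a positive-measure critical set in the quotient would force $w$ to vanish, which is a contradiction. Lifting back, the critical set is contained in orbits of a null set of the quotient. It therefore has $\mathcal L^4$ measure zero.
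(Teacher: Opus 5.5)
Your symmetry argument for \eqref{eq:origin-critical} is correct and matches the paper. Your core estimate, giving $Du_\eps\neq0$ for $|s|\le S_\eps$ and in particular on the nodal set, is also fine.

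The gap is in the measure statement. Since $\{Du_\eps=0\}$ is closed, showing that its interior is empty says nothing about its measure: a closed set with empty interior can have positive measure. So everything rests on your ``boundary is null'' step, and that step has no working mechanism.

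\begin{itemize}
\item The analytic structure is available only on $\{Du_\eps\neq0\}$, which contains no critical points at all. At the critical points $H$ is merely $C^1$. So there is no linearization or unique-continuation tool that would make the critical set a finite union of points and curves in the quotient.
\item The argument with $w$ does not close either. Positivity of $w$ together with smallness of $w$ at tail critical points is not contradictory. Nothing you wrote shows that a positive-measure critical set would force $w\equiv0$.
\item The argument with $w$ is also circular. The derivative $\partial_tu_t|_{t=0}$ is not known to exist. To identify the equation satisfied by a limit of the difference quotients, you need $A_h\to D^2H(Du_\eps)$ almost everywhere. In \cref{prop:positive-jacobi} this convergence is obtained \emph{from} the present proposition.
\end{itemize}

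The missing idea is to run your ``interior is empty'' argument in its almost-everywhere form.

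\begin{itemize}
\item Because $u_\eps\in W^{2,2}_{\loc}(\R^4)$ and $a$ is globally Lipschitz, $a(Du_\eps)\in W^{1,2}_{\loc}$. Hence the equation holds pointwise a.e. as $\eps^2\diver a(Du_\eps)=W'(u_\eps)$.
\item By $a(0)=0$ and \eqref{eq:strong-monotonicity}, $\{Du_\eps=0\}=\{a(Du_\eps)=0\}$.
\item The gradient of a Sobolev function vanishes a.e. on each of its level sets. Applied componentwise to $a(Du_\eps)$, this gives $\diver a(Du_\eps)=0$ a.e. on the critical set.
\item Hence $W'(u_\eps)=0$ a.e. on $\{Du_\eps=0\}$. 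Since $|u_\eps|<1$, this means $u_\eps=0$ a.e. there.
\item By \cref{thm:main}, the nodal set is a regular hypersurface on which $Du_\eps\neq0$, so it is $\mathcal L^4$-null, and the conclusion follows.
\end{itemize}

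No analysis of the tail or of the ordered family is needed. This also matters because in the tail $Du_\eps=Dv_{\eps,f_\eps}$ has no lower bound, so no pointwise estimate could work there anyway.
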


\begin{proof}
Since $u_\eps\in W^{2,2}_{\loc}(\R^4)$ and $a$ is globally Lipschitz, $a(Du_\eps)\in W^{1,2}_{\loc}(\R^4;\R^4).$ Since $a(0)=0$ and \eqref{eq:strong-monotonicity} holds, $\{Du_\eps=0\}=\{a(Du_\eps)=0\}.$ For a Sobolev map, its derivative vanishes almost everywhere on each
level set. Hence
\[
D\!\left(a(Du_\eps)\right)=0
\qquad\text{a.e. on }\{Du_\eps=0\}.
\]
The equation
\[
-\eps^2\diver a(Du_\eps)+W'(u_\eps)=0
\]
then gives $W'(u_\eps)=0$ almost everywhere on the critical set.
Since $-1<u_\eps<1$ and $W'(s)=s^3-s$, we have $u_\eps=0$ almost
everywhere there. By \cref{thm:main}, the nodal set $\{u_\eps=0\}$ is a
regular hypersurface and $Du_\eps\neq0$ on it. Therefore $\mathcal L^4(\{Du_\eps=0\})=0.$

The identity at the origin follows from the symmetry. Since $u_\eps$ is
$O(2)\times O(2)$-invariant and $C^1$, $Du_\eps(0)$ is fixed by the
$O(2)\times O(2)$ action on $\R^2\times\R^2$. The only such vector is
zero, so $Du_\eps(0)=0$.
\end{proof}

\begin{remark}\label{rem:origin-regularity}
The identity \eqref{eq:origin-critical} does not imply that $u_\eps$
is not $C^2$ at the origin. It only means that
\cref{prop:local-regularity}, which gives smoothness on
$\{Du_\eps\neq0\}$, does not apply there. We do not know whether
$u_\eps$ is $C^2$ at $0$.

The symmetry gives the compactness used in
\cref{prop:sharp-v-estimate}, but it also forces
$Du_\eps(0)=0$. Thus this symmetric construction cannot produce a
solution with $Du_\eps\neq0$ everywhere. Mooney--Yang discuss a similar limitation of their symmetric construction
and point to Simon's nonsymmetric PDE method; see
\cite[Section~6.2]{MooneyYang2024} and \cite{Simon1989}.
A nonsymmetric construction may avoid the critical point forced by symmetry,
which would improve the regularity issue in the stable solution construction.
\end{remark}

In the isotropic construction of Pacard--Wei
\cite{PacardWei2013}, differentiating a smooth dilation family gives a
positive Jacobi field. Here we use positive difference quotients of the
ordered family in \cref{thm:ordered-family}. By
\cref{prop:null-critical-set},
\[
A_{u_\eps}=D^2H(Du_\eps)
\]
is defined almost everywhere.

\begin{proposition}\label{prop:positive-jacobi}
For $\eps>0$ sufficiently small, there exists
$\Phi_\eps\in H^1_{\loc}(\R^4)$ with $\Phi_\eps>0$ such that
\begin{equation}\label{eq:positive-jacobi}
-\eps^2\diver(A_{u_\eps}D\Phi_\eps)
+
W''(u_\eps)\Phi_\eps
=0
\qquad\text{weakly in }\R^4.
\end{equation}
For every $\varphi\in C_c^1(\R^4)$, $Q_{\eps,u_\eps}(\varphi)\ge0.$
\end{proposition}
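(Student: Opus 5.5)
Build $\Phi_\eps$ as a limit of normalized difference quotients along the ordered family of \cref{thm:ordered-family}. Stability then follows from a standard ground-state (Picone-type) argument.

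\emph{Difference quotients.} For $0<h<t_0$ set $w_h:=(u_h-u_0)/h$. By \cref{thm:ordered-family}, $w_h>0$ in $\R^4$. By \cref{lem:difference-equation},
\[
-\eps^2\diver(A_hDw_h)+c_hw_h=0,\qquad
A_h=\int_0^1D^2H\bigl(Du_0+s(Du_h-Du_0)\bigr)\,\dd s,
\]
with $c_h=\int_0^1W''(u_0+s(u_h-u_0))\,\dd s$. Here $\lambda I\le A_h\le\Lambda I$ and $c_h$ is bounded uniformly in $h$. Normalize at a fixed point $x_0$ on the nodal set, where $Du_0\neq0$: set $\Phi_h:=w_h/w_h(x_0)$. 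The Harnack inequality for this uniformly elliptic equation with bounded coefficients bounds $\Phi_h$ above and below on compact sets, uniformly in $h$. Caccioppoli then gives uniform $H^1_{\loc}$ bounds, and De Giorgi--Nash gives uniform $C^{0,\beta}_{\loc}$ bounds. Pass to a subsequence with $\Phi_h\to\Phi_\eps$ weakly in $H^1_{\loc}$ and locally uniformly. Then $\Phi_\eps\ge0$ and $\Phi_\eps(x_0)=1$.

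\emph{Limit equation.} The family $u_t(x)=u_{\eps e^t}(e^tx)$ is continuous in $t$ in $C^1_{\loc}$; this follows from the contraction in \cref{sec:reduction} depending continuously on the scale and from \eqref{eq:scaled-C1beta}. Hence $Du_h\to Du_0$ locally uniformly. On $\{Du_0\neq0\}$, $D^2H$ is continuous, so $A_h\to D^2H(Du_0)=A_{u_\eps}$ pointwise there. By \cref{prop:null-critical-set}, this set has full measure. Also $c_h\to W''(u_\eps)$ uniformly. Since the $A_h$ are uniformly bounded, $A_h^{\mathrm T}D\varphi\to A_{u_\eps}D\varphi$ strongly in $L^2$ for each $\varphi\in C^1_c$ by dominated convergence. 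Paired with $D\Phi_h\rightharpoonup D\Phi_\eps$, this passes to the limit and gives \eqref{eq:positive-jacobi}. The Harnack inequality for the limiting equation, together with $\Phi_\eps(x_0)=1$, then gives $\Phi_\eps>0$ everywhere.

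\emph{Stability.} Given $\varphi\in C^1_c$, write $\varphi=\Phi_\eps\psi$. This is legitimate because $\Phi_\eps$ is locally bounded below and lies in $H^1_{\loc}\cap L^\infty_{\loc}$, so $\psi\in H^1$ with compact support. Test \eqref{eq:positive-jacobi} with $\Phi_\eps\psi^2$ and use the symmetry of $A_{u_\eps}$:
\[
Q_{\eps,u_\eps}(\varphi)=\eps\int\Phi_\eps^2A_{u_\eps}D\psi\cdot D\psi\,\dd x\ge0.
\]
The factor $1/\eps$ relating the two normalizations is harmless. All products are justified by the local boundedness and $H^1$ regularity.

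\emph{Main obstacle.} The delicate step is the convergence $A_h\to A_{u_\eps}$. The matrices $A_h$ are only measurable, and $D^2H$ is discontinuous at $0$. The argument needs two inputs: $Du_h\to Du_0$ a.e., which requires the continuity in $t$ of the constructed family, and the null critical set from \cref{prop:null-critical-set}. If the $C^1_{\loc}$ continuity in $t$ is hard to extract, I would instead use the ordering to get $u_h\to u_0$ monotonically. Combined with uniform $W^{2,2}_{\loc}$ bounds and Rellich, this still gives $Du_h\to Du_0$ a.e. along a subsequence, which suffices.
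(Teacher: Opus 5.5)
Your proposal is correct and follows essentially the same route as the paper. Both arguments take difference quotients along the ordered family, use \cref{lem:difference-equation}, and pass to the limit using $C^1_{\loc}$ convergence together with the null critical set of \cref{prop:null-critical-set}; positivity then comes from Harnack, and stability from the Picone-type identity with test function $\varphi^2/\Phi_\eps$. The one minor difference is that you normalize at a point $x_0$ and get the uniform local bounds and nontriviality of the limit from the Harnack inequality, whereas the paper takes these from the quantitative estimates in the proof of \cref{thm:ordered-family}; your normalization makes that step slightly more self-contained.
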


\begin{proof}
For $h>0$ small, set
\[
\Phi_h:=\frac{u_h-u_0}{h}.
\]
By \cref{thm:ordered-family}, $\Phi_h>0$. Applying
\cref{lem:difference-equation} to $u_h$ and $u_0$ gives
\begin{equation}\label{eq:jacobi-quotient}
-\eps^2\diver(A_hD\Phi_h)
+
\left(
\int_0^1
W''\bigl(u_0+s(u_h-u_0)\bigr)\,\dd s
\right)\Phi_h
=0,
\end{equation}
where $A_h$ is measurable and symmetric and $\lambda I\le A_h\le\Lambda I.$

The estimates in the proof of \cref{thm:ordered-family} give uniform
local bounds for $\Phi_h$, and a Caccioppoli estimate gives uniform
$H^1_{\loc}$ bounds. The comparison near the transition layer also gives
a positive lower bound on a fixed open set. After passing to a sequence
$h_j\downarrow0$,
\[
\Phi_{h_j}\rightharpoonup\Phi_\eps
\quad\text{in }H^1_{\loc}(\R^4),
\qquad
\Phi_{h_j}\to\Phi_\eps
\quad\text{in }L^2_{\loc}(\R^4).
\]

We have $u_h\to u_0=u_\eps$ locally in $C^1$. At every point where
$Du_\eps\neq0$, the segment joining $Du_0$ and $Du_h$ stays away from
$0$ for $h$ small. From the definition of $A_h$ in
\cref{lem:difference-equation}, $A_h\longrightarrow D^2H(Du_\eps).$ By \cref{prop:null-critical-set}, this convergence holds almost
everywhere. Passing to the limit in \eqref{eq:jacobi-quotient} gives
\eqref{eq:positive-jacobi}. The limit is nonnegative and nonzero, so
Harnack's inequality gives $\Phi_\eps>0$ in $\R^4$.

For $\varphi\in C_c^1(\R^4)$, use
$\varphi^2/\Phi_\eps$ as a test function in
\eqref{eq:positive-jacobi}. Since $\Phi_\eps$ is positive on
$\supp\varphi$, this can be justified by approximation. We obtain
\[
Q_{\eps,u_\eps}(\varphi)
=
\eps\int_{\R^4}
\Phi_\eps^2 A_{u_\eps}
D\!\left(\frac{\varphi}{\Phi_\eps}\right)\cdot
D\!\left(\frac{\varphi}{\Phi_\eps}\right)\,\dd x
\ge0.
\]
This proves stability.
\end{proof}

\begin{remark}\label{rem:ambient-stability}
The function $\Phi_\eps$ solves the full Jacobi equation in $\R^4$.
The stability inequality therefore holds for all compactly supported
variations, not only $O(2)\times O(2)$-invariant ones.
\end{remark}

Together with \cref{thm:main}, this proves the stable anisotropic De Giorgi
counterexample in $\R^4$ stated in \cref{MainThm2}.

The ordered family also gives an $L^\infty$-local minimizing property.
We construct the Hilbert calibration introduced in
\cref{def:Hilbert-calibration} directly from the family.

Fix a bounded Lipschitz domain $\Omega\Subset\R^4$ and choose
$\Omega\Subset\Omega'\Subset\R^4$. The parameter estimates and the
local Harnack inequality give, after decreasing $t_0$ if necessary,
\begin{equation}\label{eq:local-parameter-control}
c(t_2-t_1)
\le
u_{t_2}(x)-u_{t_1}(x)
\le
C(t_2-t_1)
\qquad
(x\in\overline{\Omega'},\ t_1<t_2),
\end{equation}
where $c,C>0$ may depend on $\eps$ and $\Omega'$.

Fix $0<\tau<t_0$. The graphs $z=u_t(x)$, $|t|<\tau$, fill
\[
\mathcal S
:=
\left\{
(x,z)\in\Omega'\times\R:
u_{-\tau}(x)<z<u_\tau(x)
\right\}.
\]
For $(x,z)\in\mathcal S$, let $\vartheta(x,z)$ be the unique parameter
such that $z=u_{\vartheta(x,z)}(x)$ and set $p(x,z):=Du_{\vartheta(x,z)}(x).$ By \eqref{eq:local-parameter-control}, $\vartheta$ is locally Lipschitz.

For
\[
L_\eps(z,q):=\eps H(q)+\frac1\eps W(z),
\]
define
\begin{equation}\label{eq:Hilbert-field-sec8}
X(x,z)
:=
\left(
-\eps a(p(x,z)),
\frac1\eps W(z)-\eps H(p(x,z))
\right).
\end{equation}
Since $a(p)\cdot p=2H(p)$,
\begin{equation}\label{eq:Hilbert-excess-sec8}
L_\eps(z,q)-X(x,z)\cdot(-q,1)
=
\eps B_H(q,p(x,z))
\ge
\frac{\eps\lambda}{2}|q-p(x,z)|^2.
\end{equation}
Equality holds on every graph $z=u_t(x)$.

\begin{proposition}\label{prop:strict-local-minimality}
The field $X$ is divergence free in $\mathcal S$ and calibrates every
graph $z=u_t(x)$, $|t|<\tau$. In particular, $u_\eps=u_0$ is a strict
$L^\infty$-local minimizer of $E_\eps$.
\end{proposition}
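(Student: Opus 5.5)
The plan is to prove $\diver_{x,z}X=0$ in $\mathcal D'(\mathcal S)$ by testing against $\psi\in C_c^\infty(\mathcal S)$ and changing variables to the foliation coordinates $(x,t)\mapsto(x,u_t(x))$. By \eqref{eq:local-parameter-control} this map is a bi-Lipschitz homeomorphism from $\Omega'\times(-\tau,\tau)$ onto $\mathcal S$. Its Jacobian is $\partial_tu_t$, which exists a.e.\ and is bounded below by $c$, with $t\mapsto u_t(x)$ monotone and Lipschitz. Under this change of variables,
\[
\int_{\mathcal S}X\cdot D_{x,z}\psi
=\int_{-\tau}^{\tau}\!\!\int_{\Omega'}\Bigl[X(x,u_t)\cdot D_x\psi(x,u_t)+X^z(x,u_t)\,\partial_z\psi(x,u_t)\Bigr]\partial_tu_t\,\dd x\,\dd t .
\]
Write $\Psi_t(x):=\psi(x,u_t(x))$. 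Then $D\Psi_t=D_x\psi+\partial_z\psi\,Du_t$ and $\partial_t\Psi_t=\partial_z\psi\,\partial_tu_t$. Using $X^x=-\eps a(Du_t)$ and $X^z=\frac1\eps W(u_t)-\eps H(Du_t)$ on the leaf, a direct rearrangement expresses the integrand as the sum of two pieces. The first is $-\eps\,a(Du_t)\cdot D\Psi_t\,\partial_tu_t$ plus terms that combine, via $\partial_z\psi\,\partial_tu_t=\partial_t\Psi_t$, into $\bigl[\frac1\eps W(u_t)-\eps H(Du_t)+\eps a(Du_t)\cdot Du_t\bigr]\partial_t\Psi_t$. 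The second is the $D_x\psi$ part multiplied by $\partial_tu_t$.

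The cleanest route is the classical Weierstrass field argument. By Lemma~\ref{lem:difference-equation}, the functions $u_t$ are Lipschitz in $t$ with values in $H^1(\Omega')$ locally, so one may integrate by parts in $t$. After doing so, the expression reduces to $\int_{-\tau}^{\tau}\bigl\langle N(u_t),\,\Psi_t\,\partial_t u_t\bigr\rangle$-type terms. These vanish because each $u_t$ is a weak solution of \eqref{eq:ordered-family-equation} and $\eps^{-1}\times$ the weak equation is applied to the test function $\Psi_t\partial_tu_t$, together with boundary terms in $t$ that vanish by the compact support of $\psi$.

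To avoid differentiating $u_t$ in $t$, I would instead work with difference quotients. For $h>0$, pair the weak equations for $u_{t+h}$ and $u_t$ with $\Psi$-type test functions, and use $a(p)\cdot p=2H(p)$ and the identity $H(q)-H(p)-a(p)\cdot(q-p)=B_H(q,p)$. The error terms are of size $B_H(Du_{t+h},Du_t)=O(|D(u_{t+h}-u_t)|^2)$, and by local parameter control plus Caccioppoli these are $O(h^2)$ in $L^1$. Summing over a partition of $(-\tau,\tau)$ therefore yields zero in the limit.

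Calibration of each graph is immediate from \eqref{eq:Hilbert-excess-sec8}, since $p=Du_t$ on the leaf $z=u_t$. For the minimality, take $w$ with $w-u_0\in W^{1,2}_0(\Omega)$ and $\|w-u_0\|_\infty<\eta:=\min(u_\tau-u_0,\,u_0-u_{-\tau})$ on $\overline\Omega$, which is positive by \eqref{eq:local-parameter-control}. Then the region between the graphs lies in $\mathcal S$, and Remark~\ref{rem:Hilbert-minimizing} gives $E_\eps(w;\Omega)\ge E_\eps(u_0;\Omega)$. The flux identity used there needs justification for Lipschitz $X$ and Sobolev $w$; I would obtain it by approximating $w$ by smooth functions and applying the divergence theorem on $\{(x,z):z \text{ between } u_0(x),w(x)\}$.

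For strictness, equality forces $\int_\Omega B_H(Dw,p(x,w))=0$, so $Dw=Du_{\vartheta(x,w(x))}(x)$ a.e. Set $\theta(x):=\vartheta(x,w(x))$; this function is Lipschitz-Sobolev with zero trace. Differentiating $w=u_{\theta(x)}(x)$ gives $Dw=Du_\theta+\partial_tu_\theta\,D\theta$. Since $\partial_tu_\theta\ge c>0$, it follows that $D\theta=0$ a.e., hence $\theta\equiv0$ and $w=u_0$.

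The main obstacle is the rigorous divergence-free computation, given that $t\mapsto u_t$ is only Lipschitz and $a$ is only Lipschitz at $0$. I expect the difference-quotient version of the Weierstrass argument, controlled by the quadratic Bregman bound \eqref{eq:Bregman-global}, to handle it.
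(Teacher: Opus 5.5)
Your proposal is correct and follows the paper's proof. You pull back to leaf coordinates $(x,t)\mapsto(x,u_t(x))$ and use that $t\mapsto u_t$ is Lipschitz into $W^{1,2}_{\loc}$ (via the difference equation, Caccioppoli and \eqref{eq:local-parameter-control}) to integrate by parts in $t$, reducing to the weak equation tested with $\dot u_t\psi_t$. You then get minimality from the Bregman excess \eqref{eq:Hilbert-excess-sec8} and strictness from $\dot u_t\ge c$ forcing $D\vartheta_w=0$.

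The difference-quotient variant and your flagging of the flux identity are sensible extras. One small correction: $X$ is in general only continuous, not Lipschitz, since $p=Du_{\vartheta}$ is merely H\"older near the critical set. So that justification should mollify $X$, or test with smoothed indicators of the region between the graphs, rather than apply the divergence theorem to $X$ directly.
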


\begin{proof}
It remains to prove that $X$ is divergence free. Let
$\Psi\in C_c^1(\mathcal S)$ and set
\[
\psi_t(x):=\Psi(x,u_t(x)).
\]
By \eqref{eq:local-parameter-control}, the map
$(x,t)\mapsto(x,u_t(x))$ is locally bi-Lipschitz, and
$t\mapsto u_t$ is locally Lipschitz in $W^{1,2}_{\loc}$.
Thus $\dot u_t$ exists for almost every $t$. Changing variables
$z=u_t(x)$ gives
\[
\int_{\mathcal S}X\cdot D_{x,z}\Psi
=
-\int_{-\tau}^{\tau}
\left[
\eps\int_{\Omega'}
a(Du_t)\cdot D(\dot u_t\psi_t)\,\dd x
+
\frac1\eps\int_{\Omega'}
W'(u_t)\dot u_t\psi_t\,\dd x
\right]\dd t
=0,
\]
where the last equality follows from the weak equation for $u_t$.
Hence
\[
\diver_{x,z}X=0
\qquad\text{in }\mathcal D'(\mathcal S).
\]

By \eqref{eq:local-parameter-control}, there exists $\eta>0$ such that
the region between the graphs of $w$ and $u_\eps$ is contained in
$\mathcal S$ whenever
\[
w-u_\eps\in W^{1,2}_0(\Omega),
\qquad
\|w-u_\eps\|_{L^\infty(\Omega)}<\eta.
\]
Using the divergence-free property and
\eqref{eq:Hilbert-excess-sec8}, we obtain
\begin{equation}\label{eq:calibration-gap}
E_\eps(w;\Omega)-E_\eps(u_\eps;\Omega)
\ge
\frac{\eps\lambda}{2}
\int_\Omega
|Dw-p(x,w(x))|^2\,\dd x.
\end{equation}
Thus $u_\eps$ is an $L^\infty$-local minimizer.

If equality holds, then
\[
Dw=p(x,w(x))
=
Du_{\vartheta(x,w(x))}(x)
\qquad\text{a.e. in }\Omega.
\]
Set $\vartheta_w(x):=\vartheta(x,w(x))$. Since
$w(x)=u_{\vartheta_w(x)}(x)$, the chain rule gives
\[
Dw
=
Du_{\vartheta_w(x)}(x)
+
\dot u_{\vartheta_w(x)}(x)D\vartheta_w
\qquad\text{a.e.}
\]
The lower bound in \eqref{eq:local-parameter-control} gives
$\dot u_t\ge c$ almost everywhere, so $D\vartheta_w=0$.
Hence $\vartheta_w$ is constant on each connected component of $\Omega$.
The zero boundary trace of $w-u_0$ and the strict ordering of the family
give $\vartheta_w=0$. Therefore $w=u_\eps$, and the minimum is strict.
\end{proof}

\begin{remark}\label{rem:stability-vs-local-minimality}
Stability is a second-variation condition, while $L^\infty$-local
minimality compares the full energy with nearby competitors. When the
second variation is defined, $L^\infty$-local minimality implies stability
by considering $u_\eps+t\varphi$ for small $t$. Stability alone does not
give $L^\infty$-local minimality. Neither condition gives global minimality. In
\cref{sec:monotone}, we prove global minimality separately.
\end{remark}

\section{Global minimization and monotone lifting}
\label{sec:monotone}

We now prove \cref{MainThm}. The key point from
\cref{sec:stability} is the ordered family of four-dimensional solutions.
After rescaling and reflecting this family across the two $\R^2$ factors,
we obtain lower and upper barriers for the scale-one equation in $\R^4$.
Following Liu--Wang--Wei \cite{LiuWangWei2017}, we minimize between these
barriers and obtain a nonconstant global minimizer.

The $L^\infty$-local minimality proved in \cref{sec:stability} is not
enough for the Jerison--Monneau construction, which requires a global
minimizer. Once the four-dimensional global minimizer is obtained, we use
the five-dimensional extension of the Mooney--Yang anisotropy and apply
the lifting argument of Jerison--Monneau \cite{JerisonMonneau2004}.

The comparison arguments needed in both steps remain valid in the
anisotropic setting. As in \cref{lem:difference-equation}, the difference
of two bounded weak solutions satisfies a uniformly elliptic equation with
bounded coefficients. We can therefore use the maximum principle and
Harnack's inequality in the barrier argument and in the vertical sliding
used in the lifting.

The symmetry is also used to show that the final solution is not
one-dimensional. The four-dimensional minimizer is
$O(2)\times O(2)$-invariant and has zero gradient at the origin, while the
lifting preserves the symmetry in the first four variables. A
one-dimensional solution with this symmetry would depend only on $x_5$;
the normalization of $\partial_5U(0)$ below rules this out.

\subsection{A four-dimensional global minimizer}

We first construct the barriers for the Liu--Wang--Wei argument. Write $H_4:=H$, $a_4:=a$, $E_{\eps,4}:=E_\eps$ and
\[
\Gamma^+:=\Gamma,
\qquad
\mathscr S(x,y):=(y,x),
\qquad
\Gamma^-:=\mathscr S\Gamma^+.
\]
For sufficiently small $\delta>0$, let $u_\delta^+:=u_\delta$ be the
solution given by \cref{thm:main}.  By the block-exchange symmetry of $F$
and the evenness of $W$,
\begin{equation}\label{eq:reflected-layer-sec9}
u_\delta^-(X):=-u_\delta^+(\mathscr SX)
\end{equation}
is the corresponding layer near $\Gamma^-$.  For $\ell$ sufficiently
large, set
\begin{equation}\label{eq:complete-barriers-sec9}
\underline u_\ell(X):=u^+_{1/\ell}(X/\ell),
\qquad
\overline u_\ell(X):=u^-_{1/\ell}(X/\ell).
\end{equation}
Since $a_4$ is one-homogeneous, both solve
\begin{equation}\label{eq:scale-one-4d-sec9}
-\diver a_4(Du)+W'(u)=0
\qquad\text{in }\R^4.
\end{equation}
By \cref{thm:ordered-family}, applied successively at comparable scales,
and the symmetry above, for all sufficiently large $\ell_2>\ell_1$,
\begin{equation}\label{eq:same-side-order-sec9}
\underline u_{\ell_2}<\underline u_{\ell_1},
\qquad
\overline u_{\ell_2}>\overline u_{\ell_1},
\end{equation}
and
\begin{equation}\label{eq:barrier-limits-sec9}
\underline u_\ell\longrightarrow-1,
\qquad
\overline u_\ell\longrightarrow1
\qquad\text{locally uniformly as }\ell\to\infty.
\end{equation}
It remains only to compare the two families.

\begin{lemma}\label{lem:cross-order-sec9}
There exists $\ell_0<\infty$ such that
\begin{equation}\label{eq:cross-order-sec9}
\underline u_\ell<\overline u_\ell
\qquad\text{in }\R^4
\end{equation}
for every $\ell\ge\ell_0$.
\end{lemma}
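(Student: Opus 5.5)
The plan is to compare the two layers near their transition regions, which are geometrically separated, and then use the maximum principle on the complement, as in the proof of \cref{thm:ordered-family}. The interface of $\underline u_\ell$ is $\ell\,\Gamma^+_{f}$ with $\Gamma^+$ on the side $\{|y|>|x|\}$, and the interface of $\overline u_\ell$ is $\ell\,\Gamma^-_{f}$, lying on the side $\{|x|>|y|\}$. Since $\sigma(r)>r$, and by \eqref{eq:sigma-asymptotic} $\sigma(r)-r\ge c\,\rho^{-\mu}$, the two leaves are disjoint, both asymptotic to $\mathcal C$. Their separation is of order $\rho^{-\mu}$ at the scale of $\Gamma$; after scaling by $\ell$, the physical separation at $|X|\sim\ell R$ is about $\ell R^{-\mu}$. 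This dominates the scale-one layer width $1$ as long as $R\ll\ell^{1/\mu}$, but not uniformly in $R$. Hence I would compare the signs rather than widths.

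\textbf{Sign structure.} The orientation gives $\underline u_\ell>0$ only on the $\Gamma^+$ side beyond the leaf, i.e.\ $\{|y|>\ell\sigma(|x|/\ell)\}$ roughly. Similarly, $\overline u_\ell=-u^+(\mathscr S X/\ell)$ is positive except in the region $\{|x|>\ell\sigma(|y|/\ell)\}$, where it is negative. The set where $\underline u_\ell\ge 0$ is contained in $\{|y|>|x|\}$, and the set where $\overline u_\ell\le0$ is contained in $\{|x|>|y|\}$. So wherever one of the two functions has the ``wrong'' sign, the other has the right sign, and $\underline u_\ell<\overline u_\ell$ holds on $\{\underline u_\ell\ge0\}\cup\{\overline u_\ell\le0\}$. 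More quantitatively, \eqref{eq:central-profile} and \eqref{eq:sharp-v-estimate} give $\underline u_\ell\le -1+Ce^{-\sqrt2 d_+}$ at anisotropic distance $d_+$ from $\ell\Gamma^+_f$ on its negative side, and similarly for $\overline u_\ell$.

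\textbf{Maximum principle on the remaining set.} Let $w:=\overline u_\ell-\underline u_\ell$ and consider $\{w<0\}$. By the preceding step, on this set both functions lie in $(-1,0)$. Moreover, points of $\{w<0\}$ must have $\underline u_\ell>\overline u_\ell$ with both negative. This forces $\overline u_\ell$ to be near its negative phase, so the point is near or beyond $\ell\Gamma^-$, hence far from $\ell\Gamma^+$ relative to the width. Then $\underline u_\ell\approx-1$ there, contradicting $\underline u_\ell>\overline u_\ell$ unless both are close to $-1$. I would make this precise by showing that on $\{w<0\}$ both $\underline u_\ell,\overline u_\ell\le -1+\kappa$ for a fixed small $\kappa$, once $\ell\ge\ell_0$. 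This uses the separation $\ell R^{-\mu}\gg1$ in the bulk and, at the far ends where the separation degenerates, the fact that the cone side is $\{|x|=|y|\}$: $\underline u_\ell<0$ on $\{|y|\le|x|\}$ with a quantitative bound. With $c_{12}\ge c_*>0$ on $\{w<0\}$, I would test \cref{lem:difference-equation} with $w_-$ times cutoffs, exactly as in \cref{thm:ordered-family}, to get $w\ge0$. Strict inequality then follows from Harnack, since $w\not\equiv0$.

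\textbf{Main obstacle.} The difficulty is the far region $|X|\gg\ell^{1/\mu}$, where the two rescaled interfaces become closer than the unit layer width. There the layers overlap and neither function is near $-1$, so the positivity of $c_{12}$ fails. To handle it, I would first try exploiting the exchange symmetry $w(\mathscr SX)=w(X)$ (using $\overline u_\ell=-\underline u_\ell\circ\mathscr S$). On the cone $\mathcal C$, this gives $w=\overline u_\ell-\underline u_\ell=-2\underline u_\ell>0$ since $\underline u_\ell<0$ on $\mathcal C$, which I would verify from the nodal graph description \eqref{eq:nodal-graph} plus $\sigma>r$. It then suffices to prove $w>0$ on $\{|y|>|x|\}$ with boundary data $w>0$ on $\mathcal C$. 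There $\overline u_\ell$ is near $+1$ apart from exponentially small errors in $\ell R^{-\mu}$-distance from $\ell\Gamma^-$. I would treat the far region by a sliding/dilation comparison using the ordered family of \cref{thm:ordered-family}. If this fails, I would instead run the maximum-principle argument on bounded domains with boundary control from the cone.
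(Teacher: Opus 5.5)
There is a genuine gap, located exactly at the step you call the main obstacle, and the sign-structure step that feeds into it is also flawed.

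\textbf{The sign-structure step.} The inclusions $\{\underline u_\ell\ge0\}\subset\{|y|>|x|\}$ and $\{\overline u_\ell\le0\}\subset\{|x|>|y|\}$ only exclude the mixed-sign configuration. Beyond $\ell\Gamma^+$ both functions are positive, and signs do not order them. So the claim that $\underline u_\ell<\overline u_\ell$ on $\{\underline u_\ell\ge0\}$ is false as stated, and so is the claim that both functions lie in $(-1,0)$ on $\{w<0\}$. The symmetry $w\circ\mathscr S=w$ only reduces you to one outer side.

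Even these inclusions (equivalently, $\underline u_\ell<0$ on $\mathcal C$) do not follow from $\sigma>r$ at large $R$. You need the nodal perturbation to be smaller than the distance from $\ell\Gamma^+$ to $\mathcal C$, and that distance tends to zero.

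\textbf{The overlap region.} More seriously, you never establish $w>0$ where the layers overlap, $|X|\gtrsim\ell^{1+1/\mu}$. There both functions are in their transition near $\mathcal C$, and $W''=3u^2-1$ can be negative. So $c_{12}$ has no sign, and the maximum principle on $\{|y|>|x|\}$ with positive data on $\mathcal C$ is not available. The sliding fallback is left unspecified.

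\textbf{The missing idea.} Overlap is harmless: the relevant comparison is separation versus interface \emph{error}, not separation versus layer width. Scaling \cref{thm:main}, the nodal sets deviate from $\ell\Gamma^\pm$ by at most $C\ell^{\gamma-1}(\ell+R)^{-\gamma}+C(\ell+R)^{-2}$. The separation is $\delta\simeq\ell^{1+\mu}(\ell+R)^{-\mu}$. Since $\gamma>\mu$, the ratio is $O(\ell^{-2})$ uniformly in $R$. The inequality $\gamma>\mu$ is the one quantitative input your proposal never uses.

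In anisotropic distance coordinates the two layers are $q(d-\delta)$ and $q(d+\delta)$, up to these shifts and the corrections $O((\ell+R)^{-2})$. Hence, on a band of fixed width around the layers, $w\ge c\min\{\delta,1\}-o(\min\{\delta,1\})>0$, whether or not $\delta<1$. This is the same layer comparison as in \cref{thm:ordered-family}, where the separation $\tau\rho^{-\mu}$ with $\tau\le c_0\eps$ is always below the layer width $\eps$.

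\textbf{How the proof closes.} Outside that band both functions are close to the same pure phase, so $c_{12}\ge c_*>0$ on $\{w<0\}$. Testing \cref{lem:difference-equation} with $w_-$ and cutoffs then gives $w\ge0$ on all of $\R^4$, and Harnack gives strict inequality. No half-space reduction or sliding argument is needed.
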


\begin{proof}
Let $R=|X|$.  The asymptotics of the two Mooney--Yang leaves give a
transverse separation of order $\ell^{1+\mu}(\ell+R)^{-\mu},$
whereas the scaled interface error is bounded by $C\ell^{\gamma-1}(\ell+R)^{-\gamma}
+C(\ell+R)^{-2}.$ Since $\gamma>\mu$, the ratio of the error to the separation is
$O(\ell^{-2})$, uniformly in $R$. Thus, for $\ell_0$ sufficiently large,
the two interfaces remain globally disjoint and retain the order of
$\ell\Gamma^+$ and $\ell\Gamma^-$.  The transition-layer comparison and
the maximum principle for the difference equation in
\cref{lem:difference-equation} then give \eqref{eq:cross-order-sec9}.
\end{proof}

Fix such an $\ell_0$. We now use these barriers as in
\cite[Section~2.1]{LiuWangWei2017}.

\begin{proposition}\label{prop:four-dimensional-global-minimizer}
There exists a nonconstant $O(2)\times O(2)$-invariant $E_{1,4}$ global minimizer $v\in C^{1,\alpha}_{\loc}(\R^4)$, $-1<v<1$.  Moreover,
\begin{equation}\label{eq:v-equation-sec9}
-\diver a_4(Dv)+W'(v)=0
\qquad\text{in }\R^4,
\end{equation}
and
\begin{equation}\label{eq:v-origin-sec9}
Dv(0)=0.
\end{equation}
\end{proposition}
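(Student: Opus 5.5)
We follow the Liu--Wang--Wei scheme of \cite[Section~2.1]{LiuWangWei2017}, with barriers $\underline u:=\underline u_{\ell_0}$ and $\overline u:=\overline u_{\ell_0}$ from \cref{lem:cross-order-sec9}.

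\textbf{Constrained minimizers on balls.} For each $R$ we minimize $E_{1,4}(\cdot;B_R)$ over the closed convex set
\[
\mathcal K_R:=\{w\in H^1(B_R):\ \underline u\le w\le\overline u\ \text{a.e.},\ w=\tfrac12(\underline u+\overline u)\text{ on }\partial B_R\}.
\]
The direct method gives a minimizer. Replacing a minimizer by its $O(2)\times O(2)$-average does not increase energy, since $H$ is convex and $O(2)\times O(2)$-invariant, $W$ is convex near the relevant range, and the averaging argument can be organized via the symmetric rearrangement along orbits. We therefore expect to obtain an invariant minimizer $v_R$. A cleaner route is to note that minimizers between ordered barriers are unique up to the max/min lattice property, and to take the maximal minimizer, which is invariant.

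\textbf{Barriers are not touched.} The key step is that $v_R$ solves the free equation. Since $\underline u$ and $\overline u$ are exact solutions, the standard lattice argument applies: $\max(v_R,\underline u)$ and $\min(v_R,\overline u)$ are admissible, and $E(\min)+E(\max)=E(v_R)+E(\underline u)$ on the relevant sets. Combining this with \cref{lem:difference-equation} and the strong maximum principle/Harnack for $w=v_R-\underline u\ge0$ shows that either $v_R>\underline u$ or $v_R\equiv\underline u$; the latter is excluded by the boundary data. The same holds at $\overline u$. Hence $v_R$ is a weak solution in $B_R$ that is a minimizer among all competitors lying between the barriers. By \cref{prop:local-regularity} we get uniform $C^{1,\alpha}_\loc$ bounds, and letting $R\to\infty$ along a subsequence gives an invariant solution $v$ with $\underline u\le v\le\overline u$, hence $-1<v<1$.

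\textbf{Global minimality.} This is the main obstacle, because minimality so far holds only in the constrained class. I would follow LWW: for a competitor $w$ with $w-v\in W^{1,2}_0(\Omega)$, first truncate to $[-1,1]$, which does not increase energy. Then use the ordered families \eqref{eq:same-side-order-sec9}, whose members sweep from $\overline u_\ell$ to $1$ and from $\underline u_\ell$ to $-1$ as $\ell\to\infty$ by \eqref{eq:barrier-limits-sec9}. A sliding argument with the maximum principle (via \cref{lem:difference-equation}) then shows that the minimizer of $E_{1,4}(\cdot;\Omega)$ with boundary data $v$ lies between $\underline u_\ell$ and $\overline u_\ell$ for every $\ell\ge\ell_0$. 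Consequently the unconstrained minimizer lies in the constrained class, and a limiting argument shows $v$ minimizes globally.

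\textbf{Nonconstancy and the origin.} The strict barrier ordering rules out constant solutions: a constant $c$ with $\underline u\le c\le\overline u$ must be $0$ or $\pm1$. The values $\pm1$ are excluded because $\underline u<\overline u$ with limits $\mp1$ in the respective regions. The value $0$ is unstable and therefore not a global minimizer, as a large-ball energy comparison shows. Finally, $Dv(0)=0$ follows from invariance and $C^1$ regularity, exactly as in \cref{prop:null-critical-set}.
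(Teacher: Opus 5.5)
\textbf{Verdict: correct in outline, but by a longer route than the paper's, and two justifications need replacing.}

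\textbf{How the paper does it.} The paper imposes no constraint. Its $v_R$ is an \emph{unconstrained} Dirichlet minimizer on $B_R$ with data $\tfrac12(\underline u+\overline u)$, so it is automatically a weak solution. It is then trapped by sliding $\underline u_\ell$ from $\ell=\infty$ (via \eqref{eq:barrier-limits-sec9}) down to $\ell_0$. The boundary order comes from \eqref{eq:same-side-order-sec9}, and an interior first contact is excluded by \cref{lem:difference-equation}; the same is done with $\overline u_\ell$. The rest follows quickly:
\begin{itemize}
\item global minimality of the limit comes directly from $E_{1,4}(v_R;\Omega)\le E_{1,4}(v_R+w-v;\Omega)$;
\item invariance holds because $\min\{v_R,v_R\circ T\}$ and $\max\{v_R,v_R\circ T\}$ are ordered minimizers that agree at $0$;
\item nonconstancy comes from evaluating the trapping at two points.
\end{itemize}

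\textbf{Comparison.} Your last step uses exactly this sliding, on $\Omega$. If you apply it on $B_R$ at the start, the obstacle problem, the ``barriers are not touched'' step and the constrained limit all become unnecessary. The constraint therefore buys nothing essential here.

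\textbf{Repairs if you keep your route.}
\begin{itemize}
\item In ``barriers are not touched'' you apply \cref{lem:difference-equation} to $v_R-\underline u$. That lemma presupposes $v_R$ is already a weak solution, which is exactly what you are trying to prove. Also, $\max(v_R,\underline u)=v_R$, so your lattice identity says nothing. Replace this with the standard sub/supersolution argument. For example, minimize with $W'(s)$ replaced by $W'$ evaluated at $s$ clamped to $[\underline u(x),\overline u(x)]$. Then test with $(v-\overline u)_+$ and $(\underline u-v)_+$, using \eqref{eq:strong-monotonicity}.
\item Constrained minimality does not pass to the limit through $v_R+(w-v)$, because that function can leave the class. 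Instead use $\eta\tilde w+(1-\eta)v_R$, where $\eta=1$ on $\Omega$ and $\tilde w$ equals $w$ in $\Omega$ and $v$ outside. This stays between the barriers by convexity.
\item The averaging claim is false: $W$ is concave on $|s|<1/\sqrt3$, which is precisely the layer. Your maximal-minimizer alternative is the right one.
\end{itemize}

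Your nonconstancy argument, via $W'(c)=0$ and the instability of $0$, is fine.
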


\begin{proof}
Set
$\underline u:=\underline u_{\ell_0}$ and
$\overline u:=\overline u_{\ell_0}$.
For $R>1$, let $v_R$ minimize $E_{1,4}$ in $B_R$ with boundary value $g_R:=\frac{\underline u+\overline u}{2}.$ The direct method gives a minimizer. Truncation and the maximum principle
give $-1<v_R<1$, in $B_R$.

We claim that
\begin{equation}\label{eq:vR-trapping-sec9}
\underline u<v_R<\overline u
\qquad\text{in }B_R.
\end{equation}
By \eqref{eq:barrier-limits-sec9},
$\underline u_\ell<v_R$ for $\ell$ sufficiently large.
Let $\ell$ move to $\ell_0$. The boundary values remain strictly ordered
by \eqref{eq:same-side-order-sec9}, and an interior first contact is ruled
out by \cref{lem:difference-equation} and the strong maximum principle.
This gives $\underline u<v_R$. The upper bound is proved in the same way.

We may choose $v_R$ to be $O(2)\times O(2)$-invariant.
For $T\in O(2)\times O(2)$, the function $v_R\circ T$ is also a minimizer
with the same boundary value. The functions $\min\{v_R,v_R\circ T\}$ and $\max\{v_R,v_R\circ T\}$ are again minimizers. They are ordered and agree at the origin, so
\cref{lem:difference-equation} and the strong maximum principle give
$v_R=v_R\circ T$.

After taking a subsequence, local $C^{1,\alpha}$ estimates give us
\[
v_R\longrightarrow v
\qquad\text{in }C^1_{\loc}(\R^4).
\]
Let $\Omega\Subset\R^4$, $w-v\in W^{1,2}_0(\Omega)$ and $R$ large enough, $E_{1,4}(v_R;\Omega)
\le
E_{1,4}(v_R+w-v;\Omega).$ Passing to the limit gives $E_{1,4}(v;\Omega)\le E_{1,4}(w;\Omega).$
Thus $v$ is a global minimizer and satisfies
\eqref{eq:v-equation-sec9}.

Choose $P_+,P_-\in\R^4$ such that $\underline u(P_+)>\frac12$ and $\overline u(P_-)<-\frac12$. Passing \eqref{eq:vR-trapping-sec9} to the limit gives $v(P_+)>\frac12$ and $v(P_-)<-\frac12$. So $v$ is nonconstant. The maximum principle gives $-1<v<1$, and the
$O(2)\times O(2)$ symmetry gives \eqref{eq:v-origin-sec9}.
\end{proof}

\subsection{The anisotropic Jerison--Monneau lifting}

Now introduce the five-dimensional anisotropy used in the lifting. Mooney--Yang construct a uniformly elliptic
one-homogeneous extension $F_5:\R^5\to[0,\infty)$ of $F$, smooth away from $0$, such that
\begin{equation}\label{eq:F5-extension}
F_5(p',0)=F(p')
\qquad (p'\in\R^4).
\end{equation}
The extension has the same $O(2)\times O(2)$ symmetry in the first four
variables and is even in the fifth variable; see
\cite[Section~3]{MooneyYang2024}. Set $H_5:=\frac12F_5^2$, $a_5:=DH_5$, and for $m=4,5$,
\begin{equation}\label{eq:energy-m-sec9}
E_{\eps,m}(u;\Omega)
:=
\int_\Omega
\left[
\eps H_m(Du)+\frac1\eps W(u)
\right]\dd x.
\end{equation}
Recall that $E_{\eps,4}=E_\eps$.

We apply the Jerison--Monneau lifting to the global minimizer $v$
constructed above. The two facts needed in the anisotropic setting are
the global minimality of the two endpoint solutions and the comparison
principle used in the vertical sliding.

Since $H_5$ is convex and even in the fifth variable,
$s\mapsto H_5(p',s)$ is minimized at $s=0$. Hence
\begin{equation}\label{eq:endpoint-convexity-sec9}
H_5(p',s)\ge H_5(p',0)=H_4(p'),
\qquad
H_5(p',s)\ge H_5(0,s).
\end{equation}
For the second one, average $p'$ over the
$O(2)\times O(2)$ action. Its average is $0$, so convexity and invariance
give
\[
H_5(0,s)
\le
\int_{O(2)\times O(2)} H_5(Tp',s)\,\dd T
=
H_5(p',s).
\]

Let $q_5$ be the one-dimensional heteroclinic
\begin{equation}\label{eq:q5-sec9}
-F_5(e_5)^2q_5''+W'(q_5)=0,
\qquad
q_5(\pm\infty)=\pm1,
\qquad
q_5(0)=0.
\end{equation}
Slicing and \eqref{eq:endpoint-convexity-sec9}, together with the global
minimality of $v$ and $q_5$, show that
\[
V(x',x_5):=v(x'),
\qquad
Q(x',x_5):=q_5(x_5)
\]
are global minimizers of $E_{1,5}$.

The proof of \cref{lem:difference-equation} applies in $\R^5$ with
$H_5$ in place of $H$. Thus the difference $w=U_2-U_1$ of two bounded
weak solutions satisfies
\begin{equation}\label{eq:secant-5d}
-\diver(A_{12}Dw)+c_{12}w=0,
\qquad
\lambda_5I\le A_{12}\le\Lambda_5I,
\end{equation}
where $A_{12}$ is measurable and symmetric and $c_{12}$ is bounded.
Hence the maximum principle and Harnack's inequality used in the
Jerison--Monneau sliding argument remain available.

\begin{proposition}\label{prop:anisotropic-JM}
Let $v$ be the global minimizer obtained in
\cref{prop:four-dimensional-global-minimizer}, set $s_0:=v(0)$, and define
\begin{equation}\label{eq:gamma-star-sec9}
\kappa_*
:=
q_5'\bigl(q_5^{-1}(s_0)\bigr)
=
\frac{\sqrt{2W(s_0)}}{F_5(e_5)}.
\end{equation}
For every $\kappa\in(0,\kappa_*)$, there exists a global minimizer
\[
U\in C^\infty(\R^5),
\qquad
-1<U<1,
\]
such that
\begin{equation}\label{eq:monotone-scale-one-sec9}
-\diver a_5(DU)+W'(U)=0,
\qquad
\partial_5U>0
\qquad\text{in }\R^5,
\end{equation}
and
\begin{equation}\label{eq:JM-normalization-sec9}
U(0)=s_0,
\qquad
\partial_5U(0)=\kappa.
\end{equation}
The solution is even in the first four variables and is not
one-dimensional.
\end{proposition}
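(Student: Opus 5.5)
The plan is to follow the Jerison--Monneau construction \cite{JerisonMonneau2004}, using the comparison principle from \eqref{eq:secant-5d} in place of the semilinear maximum principle. I will work with the two endpoint global minimizers $V(x',x_5)=v(x')$ and $Q(x',x_5)=q_5(x_5)$ of $E_{1,5}$. Both are global minimizers by slicing and \eqref{eq:endpoint-convexity-sec9}, and they satisfy $\partial_5V(0)=0$ and $\partial_5 Q(0)=\kappa_*$ after translating $Q$ vertically so that $Q(0)=s_0$.

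\emph{Step 1: boxes and admissible class.} For each $R$, consider the cylinder $C_R=B'_R\times(-R,R)$. I minimize $E_{1,5}$ over $O(2)\times O(2)$-invariant functions on $C_R$ that are nondecreasing in $x_5$, with boundary data interpolating monotonically from $-1$ at $x_5=-R$ to $1$ at $x_5=R$. I fix the normalization $U_R(0)=s_0$ by a vertical translation parameter.

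\emph{Step 2: continuity in a parameter.} Following JM, I introduce a one-parameter family of constrained problems interpolating between the endpoints. Concretely, I impose the constraint that $U\ge v$ on $\{x_5\ge b\}$ and $U\le v$ on $\{x_5\le -b\}$, and let $b$ range over $[0,\infty)$. At $b=0$ the constrained minimizer is forced to have a kink/flat vertical profile, so $\partial_5U(0)$ is small, while as $b\to\infty$ one recovers something comparable to $Q$ with slope near $\kappa_*$. Monotonicity in $x_5$ is preserved by the rearrangement/sliding argument. For two solutions $U(\cdot,\cdot+h)$ and $U$, the difference satisfies \eqref{eq:secant-5d}, so the strong maximum principle and Harnack give either strict order or equality. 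Together with uniqueness of the constrained minimizer via the min/max trick used in the proof of \cref{prop:four-dimensional-global-minimizer}, this gives continuous dependence of $\partial_5U_R(0)$ on $b$.

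\emph{Step 3: intermediate value and passage to the limit.} By the intermediate value theorem, I pick $b(R)$ with $\partial_5U_R(0)=\kappa$. Uniform $C^{1,\alpha}$ bounds then let me pass to a limit $U$ as $R\to\infty$. Local minimality in every box is inherited, exactly as in the proof of \cref{prop:four-dimensional-global-minimizer}, so $U$ is a global minimizer with \eqref{eq:JM-normalization-sec9}. Since $\kappa>0$, we have $\partial_5U\ge0$ with $\partial_5 U\not\equiv0$. Differentiating in $x_5$ (or taking difference quotients, as in \cref{prop:positive-jacobi}) and applying Harnack gives $\partial_5U>0$. With $\partial_5U>0$, we then have $DU\neq0$ everywhere, so \cref{prop:local-regularity} gives $U\in C^\infty$.

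\emph{Step 4: not one-dimensional.} Invariance in $x'$ under $O(2)\times O(2)$ forces $D_{x'}U(0)=0$. A one-dimensional solution would therefore be $q_5(x_5+c)$, and $U(0)=s_0$ would force its slope at $0$ to equal $\kappa_*\ne\kappa$. This is a contradiction.

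The main obstacle is Step 2. One must ensure that the constrained minimizers stay away from the obstacle $v$ in the interior, so that they solve the equation, and that $\partial_5U_R(0)$ is continuous and actually sweeps out $(0,\kappa_*)$. I would try first to prove strict separation from the obstacles using the global minimality of $V$ and $Q$ together with the strong maximum principle for \eqref{eq:secant-5d}.
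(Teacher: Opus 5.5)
Your Steps 3 and 4 agree with the paper: difference quotients and Harnack give $\partial_5U>0$, smoothness follows from $DU\neq0$, and symmetry rules out $q_5(x_5+c)$. Step 2, however, fails for a more basic reason than the separation issue you flag. \emph{Your obstacle family does not connect the two endpoints $Q$ and $V$.}

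At $b=0$, the constraints together with monotonicity only prescribe the trace $U(\cdot,0)=v$. Above and below that hyperplane the minimizer is pushed toward $\pm1$ by the data at $x_5=\pm R$ and leaves $V$ immediately. It need not solve the equation across $\{x_5=0\}$, where it can have a kink, so $\partial_5U(0)$ is not even defined, and nothing makes it small. For instance, if $v$ were replaced by the constant solution $0$ and the data were one-dimensional, then $Q$ itself would be an admissible minimizer of the $b=0$ problem. Its slope at the origin is $\kappa_*$, which is the wrong endpoint.

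An endpoint with slope tending to $0$ requires forcing the solution close to the $x_5$-independent solution $V$ on large sets. The paper does this through the lateral Dirichlet data, which your Step 1 does not pin down although it is exactly the parameter that matters. It minimizes $E_{1,5}$ on $B^4_R\times\R$ with data $B_t$ from \eqref{eq:JM-homotopy-sec9}. This family equals $Q$ at $t=0$ and tends to $V$ as $t\uparrow1$. Uniqueness at $t=1$ then gives $m_R(t)\to0$, while $m_R(0)=\kappa_*$.

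The constrained formulation also undermines Step 3. If monotonicity in $x_5$ and the obstacle inequalities are admissibility conditions, then $U_R$ minimizes only within that class. Unless you show every constraint is inactive, $U_R$ is not a weak solution. Its limit is also not a global minimizer: if $b(R)$ stays bounded, the obstacle survives in the limit. So the argument of \cref{prop:four-dimensional-global-minimizer} does not transfer. The same caution applies to imposing $O(2)\times O(2)$ invariance as a constraint, as far as global minimality is concerned. The paper imposes nothing but boundary data. It obtains monotonicity a posteriori by sliding, using that $B_t$ is strictly increasing in $x_5$ for $t<1$ together with the comparison principle from \eqref{eq:secant-5d}.

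Finally, continuity of the slope in the parameter needs uniqueness of the normalized minimizer, and the min/max trick alone does not give it. It only yields two ordered minimizers, which can be strictly ordered in the interior while agreeing on the boundary; you need a touching point. The paper delegates this to the Jerison--Monneau centering argument in the infinite cylinder. There, strictly monotone lateral data allow sliding to compare minimizers up to vertical translation, and the normalization $U(0)=s_0$ fixes the translation. This is what makes $m_R(t)$ well defined and continuous.
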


\begin{proof}
Consider the Jerison--Monneau family
\begin{equation}\label{eq:JM-homotopy-sec9}
B_t(x',x_5)
:=
q_5\bigl((1-t)x_5+tq_5^{-1}(v(x'))\bigr),
\qquad
0\le t\le1.
\end{equation}
We have $B_0=Q$, $B_1=V$, and $B_t$ is strictly increasing in $x_5$
for $t<1$.

Following \cite{JerisonMonneau2004}, minimize $E_{1,5}$ on finite
cylinders with boundary values obtained from $B_t$. The direct method and
truncation give minimizers with values in $[-1,1]$. Sliding in the
$x_5$ direction and using \eqref{eq:secant-5d} gives monotonicity in
$x_5$.

Letting the cylinder height tend to infinity gives minimizing solutions on
$B_R^4\times\R$. At $t=0$ and $t=1$ the limits are $Q$ and $V$,
respectively. At $t=1$, uniqueness follows from global minimality.
Indeed, if two minimizers have the same boundary values, their pointwise
minimum and maximum are again minimizers. They are ordered, and
\eqref{eq:secant-5d} together with the strong maximum principle shows
that they agree.

For $t<1$, translate the solution in the $x_5$ direction so that its value
at the origin is $s_0$. Let $m_R(t)$ denote its vertical derivative at
the origin. The centering argument of Jerison--Monneau gives
\[
m_R(0)=\kappa_*,
\qquad
\lim_{t\uparrow1}m_R(t)=0,
\]
and $m_R$ is continuous. Hence, for every
$\kappa\in(0,\kappa_*)$, there exists $t_R\in(0,1)$ such that the
centered minimizer $U_R$ satisfies
\begin{equation}\label{eq:UR-normalization-sec9}
U_R(0)=s_0,
\qquad
\partial_5U_R(0)=\kappa,
\qquad
\partial_5U_R\ge0.
\end{equation}

After taking a subsequence as $R\to\infty$, local compactness gives a
global minimizer $U$ satisfying
\eqref{eq:JM-normalization-sec9} and $\partial_5U\ge0$.
For $h>0$, set
\[
Z_h(x):=\frac{U(x+he_5)-U(x)}{h}\ge0.
\]
By the same difference equation,
$Z_h$ satisfies a uniformly elliptic equation of the form
\eqref{eq:secant-5d}. Since $Z_h(0)\to\kappa>0$, Harnack's inequality
gives a positive lower bound on every compact set. Letting $h\downarrow0$
gives
\begin{equation}\label{eq:strict-monotonicity-sec9}
\partial_5U>0
\qquad\text{in }\R^5.
\end{equation}

Thus $DU\neq0$ everywhere. Since $H_5$ is smooth away from $0$, elliptic
regularity gives $U\in C^\infty(\R^5)$, and the maximum principle gives
$-1<U<1$.

The construction preserves evenness in the first four variables. If $U$
were one-dimensional, this symmetry and
\eqref{eq:strict-monotonicity-sec9} would imply
\[
U(x)=q_5(x_5+c)
\]
for some $c\in\R$. Since $U(0)=s_0$,
$c=q_5^{-1}(s_0)$, and hence
\[
\partial_5U(0)
=
q_5'\bigl(q_5^{-1}(s_0)\bigr)
=
\kappa_*.
\]
This contradicts
$\partial_5U(0)=\kappa<\kappa_*$. Therefore $U$ is not
one-dimensional.
\end{proof}

For $\eps>0$, set $U_\eps(x):=U(x/\eps)$. Since $a_5$ is
one-homogeneous,
\[
-\eps^2\diver a_5(DU_\eps)+W'(U_\eps)=0,
\qquad
\partial_5U_\eps>0.
\]
The energies satisfy
\[
E_{\eps,5}(U_\eps;\Omega)
=
\eps^4E_{1,5}(U;\Omega/\eps),
\]
so $U_\eps$ is again a global minimizer. It is smooth, strictly
monotone, and not one-dimensional. This proves \cref{MainThm}.

\begin{remark}
The five-dimensional solution is obtained from the ordered family in
$\R^4$: the ordering gives the barriers for the global minimization of
Liu--Wang--Wei \cite{LiuWangWei2017}, and the resulting global minimizer
is then used in the Jerison--Monneau lifting
\cite{JerisonMonneau2004}.

This is different from the construction of del Pino--Kowalczyk--Wei \cite{delPinoKowalczykWei2011}, where the transition layer is glued directly near a large dilation of the Bombieri--De Giorgi--Giusti minimal graph
\cite{BombieriDeGiorgiGiusti1969}. Mooney--Yang also construct an entire anisotropic minimal graph in
$\R^5$, which is its anisotropic counterpart. However, our construction of the monotone solution in
$\R^5$ does not show that its nodal set is close to a large dilation of
this Mooney--Yang graph.
\end{remark}

\nosection{Acknowledgements}

The research of J.Wei is supported by National Natural Science Foundation of China (No. 12631008) and is partially supported by GRF of RGC of Hong Kong entitled “On critical and supercritical Fujita equation”. The research of Y. Wu is supported by National Natural Science Foundation of China (No. 12631008, No. 12671144), Yunnan Provincial Innovation Team on the Interdisciplinary Integration of Modern Applied Mathematics and Life Sciences (No. 202405AS350003), Yunnan Fundamental Research Projects (No. 202601CJ070001) and Yunnan Revitalization Talent Support Program. The authors used AI models to assist with calculations and writings; the main ideas, mathematical validation, and all final checks remain the sole responsibility of the human authors.

\ \\

\appendix
\section{The full weighted Jacobi inverse}
\label{app:Jacobi}

We prove \cref{thm:full-Jacobi-inverse}. The argument directly follows \cite[Section~10]{PacardWei2013} and \cite{PacardNotes}, which is the weighted Fredholm scheme based on Sobolev space and a standard barrier argument. The point here is that, for Mooney-Yang's construction, \cref{prop:full-Jacobi-Gamma} identifies the exact anisotropic model and the indicial roots is fine.

On the end of $\Gamma$, write $r=e^t$. After multiplication by $e^{2t}$, the operator in \eqref{eq:full-Jacobi-Gamma} converges exponentially to the translation-invariant
operator
\begin{equation}
  L_\infty
  :=\frac{\sqrt{2}}{2}
  \left[4\varphi''(1)(\partial_t^2+\partial_t)+\Delta_\Lambda+2\right].
  \label{eq:appendix-model-operator}
\end{equation}
Its separated solutions are $e^{\beta_j^\pm t}\phi_j$, where
$\beta_j^\pm$ are given by \eqref{eq:full-indicial-roots}.

We first record the Fredholm statement, including the precise dual weight.

\begin{lemma}[Fredholm property and dual weight]
\label{lem:appendix-fredholm-dual-weight}
Assume that
\[
  -\gamma\neq\beta_j^\pm
\]
for every $j$. Then
\begin{equation}
  J_{F,\Gamma}:C_\gamma^{2,\alpha}(\Gamma)
  \longrightarrow C_{\gamma+2}^{0,\alpha}(\Gamma)
  \label{eq:appendix-holder-map}
\end{equation}
is Fredholm. If, in addition,
\[
  -(1-\gamma)=\gamma-1\neq\beta_j^\pm
\]
for every $j$, then the unweighted pairing
\[
  (f,v)\longmapsto\int_\Gamma fv\,d\mathcal H^3
\]
induces a canonical isomorphism
\begin{equation}
  \left(
    \operatorname{coker}
    \bigl(J_{F,\Gamma}\big|_{C_\gamma^{2,\alpha}}\bigr)
  \right)^*
  \simeq
  \ker\bigl(J_{F,\Gamma}\big|_{C_{1-\gamma}^{2,\alpha}}\bigr).
  \label{eq:appendix-cokernel-duality}
\end{equation}
Moreover, there are a compact set $K\Subset\Gamma$ and a constant $C$ such
that
\begin{equation}
  \|u\|_{C_\gamma^{2,\alpha}(\Gamma)}
  \leq C\left(
    \|J_{F,\Gamma}u\|_{C_{\gamma+2}^{0,\alpha}(\Gamma)}
    +\|u\|_{C^0(K)}
  \right).
  \label{eq:appendix-holder-apriori}
\end{equation}
\end{lemma}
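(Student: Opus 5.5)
We follow the standard weighted-Fredholm scheme on asymptotically conical ends (Lockhart--McOwen, Melrose, and its Hölder version in \cite{PacardNotes}). Write the end of $\Gamma$ as $\{t>t_1\}\times\Lambda$ with $r=e^t$. By \cref{prop:full-Jacobi-Gamma}, $e^{2t}J_{F,\Gamma}=L_\infty+e^{-(1+\mu)t}\mathcal B$, where $\mathcal B$ is a second-order operator with bounded smooth coefficients. Conjugating by the weight, $u=e^{-\gamma t}w$, turns the map \eqref{eq:appendix-holder-map} into an operator between \emph{unweighted} Hölder spaces on the cylinder. The model part of this operator is $e^{\gamma t}L_\infty e^{-\gamma t}$. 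Acting on the mode $e^{i\xi t}\varphi_j$, its symbol is
\[
\frac{\sqrt2}{2}\bigl[4\phi''(1)\bigl((i\xi-\gamma)^2+(i\xi-\gamma)\bigr)-\lambda_j\bigr].
\]
This vanishes for real $\xi$ only if $-\gamma=\beta_j^\pm$, which is excluded by hypothesis. For large $j$, $\lambda_j\to\infty$, so the symbol is bounded below uniformly in $(\xi,j)$, like $1+\xi^2+\lambda_j$.

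\textbf{Step 1: inversion of the model.} We invert the model on the full cylinder $\R\times\Lambda$. Expand in eigenfunctions of $-\Delta_\Lambda-2$ and solve each ODE with the Green kernel that decays in both directions. This is possible because $-\gamma$ is not an indicial root: the kernel is a sum of $e^{(\beta_j^\pm+\gamma)(t-t')}$ on the appropriate half-lines, with uniform exponential decay. This gives $L^2$ bounds. Interior Schauder estimates on unit cylinders $[T,T+1]\times\Lambda$, together with the exponential decay of the kernel (which yields a sup bound), upgrade these to the Hölder estimate
\[
\|w\|_{C^{2,\alpha}}\le C\|e^{\gamma t}L_\infty e^{-\gamma t}w\|_{C^{0,\alpha}}.
\]

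\textbf{Step 2: a priori estimate \eqref{eq:appendix-holder-apriori}.} Choose $t_2$ large and a cutoff $\chi$ supported on $\{t>t_2\}$. Write $u=\chi u+(1-\chi)u$. For the end piece, apply the model estimate and absorb the perturbation $e^{-(1+\mu)t}\mathcal B$, which has small norm on $\{t>t_2\}$; the commutator terms $[L_\infty,\chi]u$ are supported in a compact region. For the compact piece, use interior Schauder estimates. Together these give \eqref{eq:appendix-holder-apriori} with $K=\{t\le t_2+1\}$. Since $C^{2,\alpha}_\gamma\hookrightarrow C^0(K)$ is compact by Arzelà--Ascoli, \eqref{eq:appendix-holder-apriori} implies that the kernel is finite dimensional and the range is closed.

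\textbf{Step 3: cokernel and duality.} The plan is to pass to weighted Sobolev spaces $e^{-\gamma t}H^k$ on the end, measured with the cylindrical measure $dt\,d\mu_\Lambda$. There the same parametrix yields Fredholmness, and the $L^2$ adjoint of $J_{F,\Gamma}$ is $J_{F,\Gamma}$ itself, since it is formally self-adjoint by \eqref{eq:full-Jacobi-def}. The unweighted pairing is taken with respect to $d\mathcal H^3\approx e^{3t}dt\,d\mu_\Lambda$ (the sign conventions will need checking). Accounting for the factor $e^{2t}$ in the operator, the dual of weight $\gamma$ is weight $-\gamma-2+4=2-\gamma$ in the $J$-normalization. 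Converting to the pairing of $J$-range space $C_{\gamma+2}$ against kernel space, the dual weight should come out as $1-\gamma$, matching the condition $\gamma-1\neq\beta_j^\pm$. I would verify this carefully: the quantity $\int fv\,d\mathcal H^3$ must converge when $f\in C_{\gamma+2}$ and $v\in C_{1-\gamma}$, since $r^{-\gamma-2}r^{\gamma-1}r^3=r^0$, which is only borderline. For this reason I would rather do the duality at the $L^2$ level and then transfer to Hölder spaces by elliptic regularity. Concretely, kernel elements in the Sobolev spaces lie in the Hölder spaces by the local estimates of Step 2. Because no indicial root lies between the Hölder and the Sobolev weights (shifted by $\epsilon$), the kernels and cokernels coincide. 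The Hölder range is then the annihilator of $\ker J|_{C_{1-\gamma}}$. This yields \eqref{eq:appendix-cokernel-duality}.

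\textbf{Main obstacle.} I expect the main difficulty to be this last step: matching the Sobolev and Hölder weights so that the pairing is exactly the unweighted $\int_\Gamma fv$ and the dual weight is exactly $1-\gamma$. This requires care with the conical volume growth $r^3$ and the factor $r^{-2}$ carried by $J_{F,\Gamma}$. It also requires showing that the subtraction of the $\epsilon$-shifted weights loses nothing, which relies on the absence of indicial roots in the intervening interval.
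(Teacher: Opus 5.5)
Your proposal is correct, and its cokernel step is the paper's argument: duality at the Sobolev level at an $\eta$-shifted weight, transferred to Hölder spaces because no indicial root lies between the two weights. The routes differ in where the Fredholm estimate comes from.

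You invert the conjugated model $e^{\gamma t}L_\infty e^{-\gamma t}$ directly in unweighted Hölder spaces on $\R\times\Lambda$. From this you get \eqref{eq:appendix-holder-apriori}, a finite kernel and a closed range by a Peetre-type argument, and you use Sobolev spaces only for the cokernel. The paper instead runs the whole Fredholm theory in the cylindrical spaces $\mathcal H^k_s$ for $\widehat J=aJ_{F,\Gamma}$, with $a=e^{2t}$ on the end. It then deduces the Hölder Fredholm property from the description of the Hölder range as $\bigcap_{v\in K^*_\gamma}\ker\ell_v$, and gets \eqref{eq:appendix-holder-apriori} from end and interior Schauder estimates.

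Your version gives a self-contained Hölder estimate. The paper's normalization turns your ``main obstacle'' into a one-line computation. Since $\widehat J$ is formally self-adjoint for $d\nu=a^{-1}\dd\cH^3\sim e^{t}\dd t\,\dd\cH^2_\Lambda$, one gets $(\mathcal H^0_s)^*=\mathcal H^0_{1-s}$ immediately.

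Your intermediate count $2-\gamma$ uses the ambient dimension. On the three-dimensional $\Gamma$ one has $\dd\cH^3\sim e^{3t}\dd t\,\dd\cH^2_\Lambda$, so the dual of the range weight $\gamma+2$ is $3-(\gamma+2)=1-\gamma$, as you then guessed.

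The borderline pairing is resolved as you anticipate. A Jacobi field in $C^{2,\alpha}_{1-\gamma}$ improves to $C^{2,\alpha}_{1-\gamma+\eta'}$. Hence $\int_\Gamma fv\,\dd\cH^3$ is bounded on $C^{0,\alpha}_{\gamma+2}$, and the boundary terms over $\Gamma\cap\partial B_R$ are $O(R^{-\eta'})$.

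Note also that $\beta_j^++\beta_j^-=-1$. So $\gamma-1\neq\beta_j^\pm$ follows from $-\gamma\neq\beta_j^\pm$, which is what lets you run your model estimates at the dual weight as well.
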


\begin{proof}
We give the Fredholm and duality argument in some detail because the exact Hölder weights in \eqref{eq:appendix-cokernel-duality} are borderline for the
unweighted $L^2(\Gamma)$ pairing.

Choose a smooth positive function $a$ on $\Gamma$ such that
\[
  a(t,q)=e^{2t}
\]
for all sufficiently large $t$, and set
\begin{equation}
  \widehat J:=aJ_{F,\Gamma},
  \qquad
  d\nu:=a^{-1}\,d\mathcal H^3.
  \label{eq:appendix-rescaled-operator}
\end{equation}
Multiplication by $a$ is an isomorphism from
$C_{\gamma+2}^{0,\alpha}(\Gamma)$ onto
$C_\gamma^{0,\alpha}(\Gamma)$. By (3.23), on the end we have
\begin{equation}
  \widehat J=L_\infty+\mathcal R,
  \label{eq:appendix-operator-perturbation}
\end{equation}
where the coefficients of the second-order operator $\mathcal R$, together
with the derivatives needed in the estimates below, converge exponentially
to zero.

The rescaled operator $\widehat J$ is formally self-adjoint with respect to
$d\nu$. Indeed, for compactly supported $u,v$,
\begin{align}
  \int_\Gamma u\widehat Jv\,d\nu
  &=\int_\Gamma uJ_{F,\Gamma}v\,d\mathcal H^3 \notag\\
  &=\int_\Gamma vJ_{F,\Gamma}u\,d\mathcal H^3
   =\int_\Gamma v\widehat Ju\,d\nu.
  \label{eq:appendix-formal-self-adjointness}
\end{align}
In the coordinates $(t,q)$ on the end,
\begin{equation}
  d\nu=b(t,q)e^t\,dt\,d\mathcal H_\Lambda^2,
  \label{eq:appendix-rescaled-measure}
\end{equation}
where $b$ is positive, is bounded above and below uniformly, and converges
exponentially to a positive limit.

For $s\in\mathbb R$, let $\mathcal H_s^k(\Gamma)$ be the weighted cylindrical
Sobolev space with norm equivalent on the end to
\begin{equation}
  \sum_{\ell=0}^k
  \left\|e^{st}\nabla_{t,q}^{\ell}u\right\|_{L^2(dt\,d\mathcal H_\Lambda^2)},
  \label{eq:appendix-sobolev-norm}
\end{equation}
together with the usual $H^k$-norm on a fixed compact subset of $\Gamma$.
The critical values of $s$ for
\begin{equation}
  \widehat J:\mathcal H_s^2(\Gamma)
  \longrightarrow\mathcal H_s^0(\Gamma)
  \label{eq:appendix-sobolev-map}
\end{equation}
are precisely the values satisfying $-s=\beta_j^\pm$ for some $j$.

We use the following standard weight-improvement consequence of the indicial
analysis. Suppose that a closed interval of decay weights contains no number
$-\beta_j^\pm$. After decreasing $\eta>0$ if necessary, one has
\begin{align}
  u\in\mathcal H_{\lambda-\eta}^2,
  \quad \widehat Ju\in C_\lambda^{0,\alpha}
  &\quad\Longrightarrow\quad
  u\in C_\lambda^{2,\alpha},
  \label{eq:appendix-inhomogeneous-weight-improvement}\\
  \widehat Ju=0,
  \quad u\in C_\lambda^{2,\alpha}
  &\quad\Longrightarrow\quad
  u\in C_{\lambda+\eta}^{2,\alpha}.
  \label{eq:appendix-homogeneous-weight-improvement}
\end{align}
For completeness, these implications can be seen by cutting off the solution
on the end and expanding it in the eigenfunctions $\phi_j$ of
$-(\Delta_\Lambda+2)$. Variation of parameters for each ordinary differential
equation associated with $L_\infty$ shows that a change of weight can fail only
when one crosses an exponent $-\beta_j^\pm$. The perturbation $\mathcal R$ in
\eqref{eq:appendix-operator-perturbation} is absorbed after moving the initial
point of the end sufficiently far out. Interior and dyadic Schauder estimates
then give
\eqref{eq:appendix-inhomogeneous-weight-improvement} and
\eqref{eq:appendix-homogeneous-weight-improvement}.

For every noncritical $s$, the same model inverse and perturbation argument
gives
\begin{equation}
  \|u\|_{\mathcal H_s^2}
  \leq C\left(
    \|\widehat Ju\|_{\mathcal H_s^0}+\|u\|_{L^2(K)}
  \right)
  \label{eq:appendix-sobolev-apriori}
\end{equation}
for some fixed compact set $K\Subset\Gamma$. It follows that the kernel of
\eqref{eq:appendix-sobolev-map} is finite dimensional.

The range is closed. Otherwise, after restricting to a closed complement of
the kernel, there would exist a sequence $u_i$ such that
\[
  \|u_i\|_{\mathcal H_s^2}=1,
  \qquad
  \|\widehat Ju_i\|_{\mathcal H_s^0}\longrightarrow0.
\]
After passing to a subsequence, weak compactness and Rellich compactness on
$K$ give $u_i\rightharpoonup u$ in $\mathcal H_s^2$ and
$u_i\to u$ in $L^2(K)$. The limit belongs to the kernel and to the chosen
complement, so $u=0$. Estimate \eqref{eq:appendix-sobolev-apriori} then gives
$\|u_i\|_{\mathcal H_s^2}\to0$, a contradiction.

The pairing induced by $d\nu$ identifies the dual of $\mathcal H_s^0$ with
$\mathcal H_{1-s}^0$. Indeed, by \eqref{eq:appendix-rescaled-measure},
\begin{equation}
  \left|\int_\Gamma Fv\,d\nu\right|
  \leq C
  \|e^{st}F\|_{L^2(dt\,dq)}
  \|e^{(1-s)t}v\|_{L^2(dt\,dq)}.
  \label{eq:appendix-sobolev-dual-pairing}
\end{equation}
Formal self-adjointness and elliptic regularity therefore identify the
annihilator of the range of \eqref{eq:appendix-sobolev-map} with
\begin{equation}
  \ker\left(
    \widehat J:\mathcal H_{1-s}^2
    \longrightarrow\mathcal H_{1-s}^0
  \right).
  \label{eq:appendix-sobolev-cokernel}
\end{equation}
Here formal self-adjointness first applies to compactly supported functions;
the identity extends to the weighted spaces by cutoff and density, using
\eqref{eq:appendix-sobolev-dual-pairing}. Choosing $s$ so that both $s$ and
$1-s$ are noncritical, estimate \eqref{eq:appendix-sobolev-apriori} at the
complementary weight shows that the space in
\eqref{eq:appendix-sobolev-cokernel} is finite dimensional. Thus
\eqref{eq:appendix-sobolev-map} is Fredholm.

We now pass to the exact Hölder weights. Fix $\gamma$ with
$-\gamma\neq\beta_j^\pm$ for every $j$. Choose $\eta>0$ sufficiently small
that the interval $[\gamma-2\eta,\gamma+2\eta]$ contains no indicial decay
rate, and so that both $s:=\gamma-\eta$ and $1-s$ are noncritical. If
$f\in C_{\gamma+2}^{0,\alpha}(\Gamma)$ and $F:=af$, then
\begin{equation}
  F\in C_\gamma^{0,\alpha}(\Gamma)
  \subset\mathcal H_{\gamma-\eta}^0(\Gamma).
  \label{eq:appendix-holder-to-sobolev}
\end{equation}
The Sobolev Fredholm alternative shows that
\begin{equation}
  \widehat Ju=F,
  \qquad
  u\in\mathcal H_{\gamma-\eta}^2,
  \label{eq:appendix-sobolev-equation}
\end{equation}
is solvable if and only if
\begin{equation}
  \int_\Gamma Fv\,d\nu=0
  \label{eq:appendix-solvability-condition}
\end{equation}
for every
\begin{equation}
  v\in K_\gamma^*
  :=\ker\left(
    \widehat J:\mathcal H_{1-\gamma+\eta}^2
    \longrightarrow\mathcal H_{1-\gamma+\eta}^0
  \right).
  \label{eq:appendix-dual-kernel}
\end{equation}
Whenever a Sobolev solution exists,
\eqref{eq:appendix-inhomogeneous-weight-improvement}, applied between the
weights $\gamma-\eta$ and $\gamma$, gives
\begin{equation}
  u\in C_\gamma^{2,\alpha}(\Gamma).
  \label{eq:appendix-sobolev-to-holder}
\end{equation}

Elements of $K_\gamma^*$ decay strictly faster than
$\rho^{-(1-\gamma)}$. Hence each $v\in K_\gamma^*$ defines a bounded functional
on $C_{\gamma+2}^{0,\alpha}$ by
\begin{equation}
  \ell_v(f)
  :=\int_\Gamma fv\,d\mathcal H^3
  =\int_\Gamma Fv\,d\nu.
  \label{eq:appendix-cokernel-functional}
\end{equation}
If $f=J_{F,\Gamma}u$ for $u\in C_\gamma^{2,\alpha}$, integration by parts on
$\Gamma\cap B_R$ gives $\ell_v(f)=0$. The boundary term tends to zero as
$R\to\infty$: if $v=O(\rho^{-(1-\gamma+\eta')})$ for some $\eta'>0$, its size
is $O(R^{-\eta'})$. Conversely, if $\ell_v(f)=0$ for every
$v\in K_\gamma^*$, the Sobolev Fredholm alternative produces a solution of
\eqref{eq:appendix-sobolev-equation}, and
\eqref{eq:appendix-sobolev-to-holder} upgrades it to
$C_\gamma^{2,\alpha}$. Consequently,
\begin{equation}
  \operatorname{Ran}\left(
    J_{F,\Gamma}\big|_{C_\gamma^{2,\alpha}}
  \right)
  =\bigcap_{v\in K_\gamma^*}\ker\ell_v.
  \label{eq:appendix-holder-range}
\end{equation}
This range is closed and has finite codimension. Since the kernel in the
Hölder space is contained in the finite-dimensional Sobolev kernel, the map
in \eqref{eq:appendix-holder-map} is Fredholm. The end Schauder estimate used
above, together with an interior estimate on $K$, also gives
\eqref{eq:appendix-holder-apriori}.

It remains to identify the exact complementary weight. Assume that
$1-\gamma$ is also noncritical, and reduce $\eta$ so that a neighborhood of
$1-\gamma$ contains no indicial decay rate. Weighted elliptic regularity and
\eqref{eq:appendix-homogeneous-weight-improvement} give
\begin{equation}
  K_\gamma^*
  =\ker\left(
    J_{F,\Gamma}:C_{1-\gamma}^{2,\alpha}
    \longrightarrow C_{3-\gamma}^{0,\alpha}
  \right).
  \label{eq:appendix-exact-dual-kernel}
\end{equation}
Indeed, an element of $K_\gamma^*$ has decay strictly faster than
$\rho^{-(1-\gamma)}$ and therefore lies in
$C_{1-\gamma}^{2,\alpha}$. Conversely, a Jacobi field in
$C_{1-\gamma}^{2,\alpha}$ improves to
$C_{1-\gamma+\eta'}^{2,\alpha}$ for some $\eta'>\eta$, after decreasing
$\eta$ if necessary, and hence belongs to
$\mathcal H_{1-\gamma+\eta}^2$.

Equations \eqref{eq:appendix-holder-range} and
\eqref{eq:appendix-exact-dual-kernel} show that
$v\mapsto\ell_v$ induces the isomorphism
\eqref{eq:appendix-cokernel-duality}.
\end{proof}

The dilation foliation gives the injectivity needed at both weights. Define
\begin{equation}
  \zeta_0(X):=X\cdot\nu_\Gamma(X),
  \qquad X\in\Gamma,
  \label{eq:appendix-dilation-field}
\end{equation}
with the orientation chosen so that $\zeta_0>0$. Since dilations preserve
$F$-minimality,
\begin{equation}
  J_{F,\Gamma}\zeta_0=0.
  \label{eq:appendix-dilation-jacobi}
\end{equation}
Moreover, by the decay of $\Gamma$,
\begin{equation}
  \zeta_0(X)\simeq\rho(X)^{-\mu}
  \qquad\text{as }|X|\longrightarrow\infty.
  \label{eq:appendix-dilation-asymptotics}
\end{equation}

\begin{lemma}[Injectivity beyond the slow mode]
\label{lem:appendix-injectivity}
For every $\gamma>\mu$,
\begin{equation}
  \ker\left(
    J_{F,\Gamma}:C_\gamma^{2,\alpha}(\Gamma)
    \longrightarrow C_{\gamma+2}^{0,\alpha}(\Gamma)
  \right)=\{0\}.
  \label{eq:appendix-injective-map}
\end{equation}
\end{lemma}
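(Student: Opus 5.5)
We argue by the maximum principle, using the positive Jacobi field $\zeta_0$ from \eqref{eq:appendix-dilation-field}--\eqref{eq:appendix-dilation-asymptotics} as a barrier. Let $u\in C^{2,\alpha}_\gamma(\Gamma)$ with $J_{F,\Gamma}u=0$ and $\gamma>\mu$. We may assume $\gamma<1-\mu$, since a larger weight gives a smaller space. Then $|u|\le C\rho^{-\gamma}$, while $\zeta_0\ge c\rho^{-\mu}$ everywhere: $\zeta_0$ is positive and continuous on compact sets and behaves like $\rho^{-\mu}$ at infinity. Hence
\[
 w:=\frac{u}{\zeta_0}\longrightarrow0\qquad\text{as }|X|\to\infty .
\]
Write $u=\zeta_0w$. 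Using $J_{F,\Gamma}\zeta_0=0$ and the divergence structure of \eqref{eq:full-Jacobi-def}, we get
\[
 0=J_{F,\Gamma}(\zeta_0w)=\zeta_0^{-1}\diver_\Gamma\!\left(\zeta_0^2\,\Psi_F(\nu)\nabla_\Gamma w\right).
\]
This follows from the product rule: the cross terms combine because $\Psi_F(\nu)$ is symmetric. Thus $w$ solves a divergence-form elliptic equation with no zeroth-order term. Its coefficient matrix $\zeta_0^2\Psi_F(\nu)$ is positive definite on compact sets, by \eqref{eq:tangential-ellipticity} and $\zeta_0>0$.

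Next apply the maximum principle on large sets. Suppose $\sup_\Gamma w>0$. Since $w\to0$ at infinity, the supremum is attained at an interior point of $\Gamma$. Because $\Gamma$ is connected (it is a graph over $\R^2$ in the $x$-variables), the strong maximum principle for $\diver(\zeta_0^2\Psi_F\nabla w)=0$ makes $w$ constant. That constant is positive, which contradicts $w\to0$. Hence $w\le0$, and the same argument applied to $-w$ gives $w\ge0$. Therefore $w\equiv0$ and $u\equiv0$. Alternatively, one can apply the weak maximum principle on $\Gamma\cap B_R$, where $|w|\le\sup_{\partial B_R}|w|=O(R^{\mu-\gamma})$, and let $R\to\infty$.

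The only step needing care is the lower bound $\zeta_0\ge c\rho^{-\mu}$, specifically the claim that $\zeta_0$ has no zeros. Positivity of $\zeta_0=X\cdot\nu_\Gamma$ comes from the fact that the dilations $e^{-t}\Gamma$ foliate one side of $\mathcal C$, so $\Gamma$ is strictly star-shaped with respect to the origin. This is consistent with $f_0=\sigma-r\sigma'>0$ in \eqref{eq:dilation-Jacobi}, since $\zeta_0$ agrees with $\omega_0f_0$ up to a positive factor. The asymptotics come from \eqref{eq:dilation-asymptotics}, which uses $c_+>0$. Once this is established, the rest is the standard positive-supersolution argument and needs no further estimates.
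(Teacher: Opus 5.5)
Your proposal is correct and follows essentially the same route as the paper: divide by the positive dilation Jacobi field $\zeta_0$, show that $w=u/\zeta_0$ solves $\diver_\Gamma(\zeta_0^2\Psi_F(\nu)\nabla_\Gamma w)=0$, and combine $w\to0$ at infinity with the maximum principle. Your write-up is somewhat more explicit than the paper's. You spell out the lower bound $\zeta_0\ge c\rho^{-\mu}$ via $\zeta_0=\omega_0f_0$, and you note that $\Gamma$ is connected. One small correction there: $\Gamma$ is diffeomorphic to $\R^2\times\Sph^1$, a circle bundle over $\R^2_x$ rather than a graph over it, but this does not affect the argument.
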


\begin{proof}
The function $\zeta_0(X)$ of Jacobi field solving $$\mathrm{div}_\Gamma[\Psi(\nu)(\nabla\zeta_0)]+\mathrm{tr}_\Gamma (\Psi(\nu)A_\Gamma^2) \zeta_0= 0.$$ 
By the property of Mooney-Yang's foliation, $\zeta_0\neq 0$ everywhere and does not change sign.

Then according to \cite[Lemma 2.1]{MooneyYang2021}, $\zeta_0$ does not belong to $C^{2,\alpha}_\gamma(\Gamma)$ for $\gamma >\mu$. Since this Jacobi field does not change sign, it can be used as a barrier to prove injectivity in the corresponding spaces. 

Here we state the barrier argument in detail: let $u\in C^{2,\alpha}_\gamma(\Gamma),J_\Gamma u=0, \gamma>\mu$. Denote
$$J_\Gamma u = L(u)+Vu,\, L(u)\coloneqq \operatorname{div}_\Gamma[\Psi(\nu)(\nabla u)],\, V\coloneqq \operatorname{tr}_\Gamma(\Psi(\nu)A_\Gamma^2).$$
Let \(w = u/\zeta_0\), a.e. \(u = w\zeta_0\). By
$$\begin{aligned}
L(w\zeta_0) &= \operatorname{div}_\Gamma\big[\Psi(\nu)\nabla(w\zeta_0)\big] \\
&= \operatorname{div}_\Gamma\big[\zeta_0 \Psi(\nu)\nabla w\big] + \operatorname{div}_\Gamma\big[w \Psi(\nu)\nabla \zeta_0\big]\\
&=\zeta_0\,L(w) + w\,L(\zeta_0) + \langle \nabla \zeta_0, \Psi(\nu)\nabla w \rangle + \langle \nabla w, \Psi(\nu)\nabla \zeta_0 \rangle\\
&=\zeta_0\,L(w) + w\,L(\zeta_0) + 2\langle \nabla \zeta_0, \Psi(\nu)\nabla w \rangle
\end{aligned}$$
we have
$$\begin{aligned}
0 = J_\Gamma(w\zeta_0) &= L(w\zeta_0) + Vw\zeta_0 = \zeta_0 L(w) + 2\langle \nabla \zeta_0, \Psi(\nu)\nabla w \rangle,
\end{aligned}$$
hence we get the equation of $w$
$$\begin{aligned}
\operatorname{div}_\Gamma[\zeta_0^2\,\Psi(\nu)\nabla w]
&= \zeta_0^2\,L(w) + \langle \nabla(\zeta_0^2), \Psi(\nu)\nabla w \rangle \\
&= \zeta_0(\zeta_0 L(w) + 2\langle \nabla \zeta_0, \Psi(\nu)\nabla w\rangle) = 0.
\end{aligned}$$
By definition, $u$ decay faster than $\zeta_0$, so $w=u/\zeta_0$ vanish at infinity. By maximum principle, $w\equiv 0$, so $u\equiv 0$. 

\end{proof}

\begin{proof}[Proof of Theorem \ref{thm:full-Jacobi-inverse}]
First, by Proposition \ref{prop:cone-Jacobi}, every
\[
  \gamma\in(\mu,1-\mu)
\]
avoids the indicial roots. $1-\gamma$ belongs to the
same interval and therefore also avoids them.

Lemma \ref{lem:appendix-fredholm-dual-weight} shows that
\[
  J_{F,\Gamma}:C_\gamma^{2,\alpha}
  \longrightarrow C_{\gamma+2}^{0,\alpha}
\]
is Fredholm. Since $\gamma>\mu$, Lemma
\ref{lem:appendix-injectivity} gives injectivity. By
\eqref{eq:appendix-cokernel-duality}, the dual of its cokernel is the kernel at
weight $1-\gamma$. Since $1-\gamma>\mu$, Lemma
\ref{lem:appendix-injectivity} shows that this kernel is also zero. The map is
therefore surjective and hence is an isomorphism.

For $k=0$, estimate \eqref{eq:full-Jacobi-estimate} follows from
\eqref{eq:appendix-holder-apriori} and injectivity by the usual contradiction
argument. For $k\geq1$, both the estimate and the isomorphism statement follow
from weighted Schauder regularity on the dyadic charts. This proves \eqref{eq:full-Jacobi-isomorphism} and
\eqref{eq:full-Jacobi-estimate}.
\end{proof}

\bibliographystyle{amsalpha}
\bibliography{references}
 
\end{document}